\pgfplotsset{compat=1.16}
\newtheorem{thm}{Theorem}[section]
\newtheorem{mydef}[thm]{Definition}
\newtheorem{prop}[thm]{Proposition}
\newtheorem{lem}[thm]{Lemma}
\newtheorem{cor}[thm]{Corollary}
\newtheorem*{nb}{Note}
\title{\bfseries Graph MBO as a semi-discrete implicit Euler scheme for graph Allen\textendash Cahn flow}
\author{Jeremy Budd \qquad Yves van Gennip \vspace*{1em}\\
\small{Delft Institute of Applied Mathematics (DIAM)}\\
\small{Technische Universiteit Delft}\\ \small{Delft, The Netherlands}\vspace*{1em} \\
 \texttt{j.m.budd-1@tudelft.nl} \qquad \texttt{y.vangennip@tudelft.nl}
%\footnotemark[2]
}
\date{}
\DeclareMathOperator*{\argmin}{argmin}
\DeclareMathOperator*{\Glim}{\Gamma-lim}
\DeclareMathOperator{\TV}{TV}
\DeclareMathOperator{\GL}{GL_{\varepsilon}}
\DeclareMathOperator{\sgn}{sgn}
\DeclarePairedDelimiter\ceil{\lceil}{\rceil}
\DeclarePairedDelimiter\ip{\langle}{\rangle_\V}
\numberwithin{equation}{section}
\newcommand{\be}{\begin{equation}}
\newcommand{\ee}{\end{equation}}
\newcommand{\V}{\mathcal{V}}
\newcommand{\bigO}{\mathcal{O}}
\begin{document}
\maketitle
\begin{abstract}
In recent years there has been an emerging interest in PDE-like flows defined on finite graphs, with applications in clustering and image segmentation. In particular for image segmentation and semi-supervised learning Bertozzi and Flenner [A. L. Bertozzi and A. Flenner, \emph{Multiscale Modeling and Simulation}
10 (2012), no. 3, pp. 1090\textendash1118] developed an algorithm based on the Allen--Cahn gradient flow of a graph Ginzburg--Landau functional, and Merkurjev, Kosti\'c and Bertozzi [E. Merkurjev, T. Kostić, and A. L. Bertozzi, \emph{SIAM Journal on Imaging Sciences} 6 (2013), no. 4, pp. 1903\textendash 1930] devised a variant algorithm based instead on graph Merriman--Bence--Osher (MBO) dynamics. 
This work offers rigorous justification for this use of the MBO scheme in place of Allen--Cahn flow. First, we choose the double-obstacle potential for the Ginzburg--Landau functional, and derive well-posedness and regularity results for the resulting graph Allen--Cahn flow. Next, we exhibit a ``semi-discrete" time-discretisation scheme for Allen--Cahn flow of which the MBO scheme is a special case. We investigate the long-time behaviour of this scheme, and prove its convergence to the Allen--Cahn trajectory as the time-step vanishes. Finally, following a question raised by Van Gennip, Guillen, Osting and Bertozzi [Y. van Gennip, N. Guillen, B. Osting, and A. L. Bertozzi,
\emph{Milan Journal of Mathematics} 82 (2014), no. 1, pp. 3\textendash65], we exhibit results towards proving a link between double-obstacle Allen--Cahn flow and mean curvature flow on graphs. We show some promising $\Gamma$-convergence results, and translate to the graph setting two comparison principles used by Chen and Elliott [X. Chen and C. M. Elliott, \emph{Proc. Mathematical and Physical Sciences} 444 (1994), no. 1922, pp. 429\textendash 445] to prove the analogous link in the continuum.

\textbf{Key words:}
Allen--Cahn equation, Ginzburg--Landau functional, Merriman--Bence--Osher scheme, double-obstacle potential, mean curvature flow, $\Gamma$-convergence, graph dynamics.

\textbf{Classifications:}
05C99, 34B45, 35R02, 53C44, 49K15, 35K05, 34A12, 65N12 
\end{abstract}
\section{Introduction}
In this paper, we derive a link between formulations of the Merriman--Bence--Osher (MBO) scheme for diffusion generated motion and the Allen--Cahn gradient flow of the Ginzburg--Landau functional on finite graphs. We go on to discuss results towards a link between these flows and a graph formulation of mean curvature flow.

The core background for this work is the paper of Van Gennip, Guillen, Osting and Bertozzi \cite{vGGOB} in which a framework for graph-based analysis is defined, and within this framework described graph variants of MBO, Allen--Cahn and mean curvature flow. This paper builds upon that framework, seeking to demonstrate concrete links between these flows, especially in light of their interrelated use in algorithms developed by Bertozzi and co-authors \cite{BF,MKB} inspired by the connections between mean curvature flow and the method of Chan--Vese (see e.g. \cite{ET}). %[[DESCRIBE MORE HOW WE RELATE AND DO NOT RELATE TO 1]]

The central result of this paper is that taking the Ginzburg--Landau functional with the ``double-obstacle" potential  (see Blowey and Elliott \cite{BE1991,BE1992,BE1993} for detail in the continuum context, and Bosch, Klamt and Stoll \cite{BKS2018} in the graph context) the MBO scheme becomes exactly a particular choice of time-step for a ``semi-discrete" numerical scheme for the Allen--Cahn flow. 
%%We will explore the properties of our semi-discrete scheme (and thus in particular MBO) and its convergence to the continuous-time Allen--Cahn flow with this potential. Furthermore, we prove existence, uniqueness and Lipschitz regularity for this Allen--Cahn flow. 
%Finally, we follow \cite{vGGOB} in investigating links between MBO, Allen--Cahn and a graph formulation of mean curvature flow. We present encouraging $\Gamma$-convergence results, and prove a pair of comparison principles that are graph analogues of comparison principles used by Chen and Elliott \cite{CE} to prove convergence of continuum Allen--Cahn (with double-obstacle potential) to mean curvature flow.  
\subsection{Contributions of this work}
In this paper we have:
\begin{itemize}
\item Defined a graph Allen--Cahn flow with double-obstacle potential (Definition \ref{ACdef}) along with an explicit form and a weak form (Theorems \ref{ACexplicit} and \ref{ACobsweak} respectively).
\item Proved well-posedness (Theorems \ref{existence}, \ref{uniquenessprequel}, and \ref{wellp}), monotonicity of Ginzburg--Landau (Proposition \ref{ACgf}), and Lipschitz regularity (Theorem \ref{ACLips}) of solutions to this flow.
\item Defined the semi-discrete scheme for this flow (Definition \ref{SDdef}) and proved that the MBO scheme is a special case of the semi-discrete scheme (Theorem \ref{obsMMthm}).
\item Derived a Lyapunov functional for the semi-discrete scheme (Theorem \ref{Lyapthm}) and proved convergence of the scheme to the Allen--Cahn trajectory (Theorem \ref{SDlimit}).
\item Proved the $\Gamma$-convergence of this Lyapunov functional to total variation (Corollary \ref{LyapGamma}), and graph variants of two key comparison principles from \cite{CE} (Theorems \ref{cp2} and \ref{cp1}).
\end{itemize}
Beyond the immediate contributions of this paper, we note two major avenues this work opens for future research:
\begin{itemize}
\item The framework of this paper yields a natural way of incorporating constraints on the MBO scheme by imposing them on the Allen--Cahn flow, and then investigating the semi-discrete scheme for this modified Allen--Cahn flow. Ongoing work by the authors will apply this strategy to investigating Allen--Cahn flow and the MBO scheme under mass-conservation \cite{BuddvG} and fidelity-forcing \cite{Buddfidelity} constraints.
\item The semi-discrete scheme yields a family of MBO-like schemes that may prove fruitful as alternatives to MBO and Ginzburg--Landau methods in applications. Ongoing work by the authors will investigate this for image processing and classification in \cite{Buddfidelity}.
\end{itemize}
\subsection{Background}
In the continuum, it is well-known that Allen--Cahn, MBO and mean curvature flow share important interrelations. The MBO scheme was developed in \cite{MBO1992} as a means of approximating motion according to mean curvature flow by iterative diffusion and thresholding of a set. The paper gave a formal analysis showing that diffusion of a set locally corresponded to motion with curvature dependent velocity, suggesting a convergence as the MBO time-step went to zero. This formal analysis was then supported by rigorous convergence proofs by Evans \cite{Evans} and Barles and Georgelin \cite{BG}. Recently, Swartz and Kwan Yip \cite{SKY} presented an elementary proof of the convergence via the weak formulation of mean curvature flow in \cite{LS}. The connections between Ginzburg\textendash Landau dynamics and mean curvature flow have been extensively studied, dating back to a formal analysis by Allen and Cahn in \cite{AC}. The basic convergence result, see e.g. \cite{BV}, \cite{ESS} and \cite{Soner}, is that as $\varepsilon\rightarrow 0$ the Allen\textendash Cahn solution tends to a phase-separation with the interface evolving by mean curvature flow. Thus a method of approximating mean curvature flow is as a singular limit of ``phase fields" evolving under the Allen\textendash Cahn equation.\footnote{See \cite{BV}, \cite{ESS} and \cite{BSS} for detail on this method.}    

Mean curvature flow also arises in a discrete context, e.g. in image segmentation. A major technique in this area is a variational approach, involving minimising the Mumford\textendash Shah functional \cite{MS}. As this functional is quite intensive to minimise in full generality, Chan and Vese \cite{CV} introduced a method of using a level-set approach in the simplified case of a piecewise-constant image. In \cite{ET} Esedo\=glu and Tsai considered in particular the case where the image $u$ takes just two values $c_1$ and $c_2$ on regions $\Sigma$ and $\Omega\setminus\Sigma$ respectively.
%, leading to the functional: \be\label{CV} \mathscr{E}(\Sigma,c_1,c_2) := \lambda \int_\Sigma (c_1-f)^2 \; dx +\lambda \int_{\Omega\setminus\Sigma} (c_2-f)^2 \; dx + |\partial\Sigma|.  \ee 
The Euler\textendash Lagrange level-set equations devised to minimise the Chan--Vese functional in this case closely resemble those associated to mean curvature flow of the boundary $\partial\Sigma$.\footnote{In particular, a common approach leads to motion of $\partial\Sigma$ with normal velocity $\kappa - \lambda(c_1-f)^2 +\lambda(c_2-f)^2$ where $\kappa$ is the mean curvature, $f$ the reference and $\lambda$ the strength of fidelity to the reference. } Motivated by this and the convergence results above,  Esedo\=glu and Tsai devised a variant of the MBO scheme to minimise the functional. Interpreting the MBO scheme as an approximation to a time-splitting of the Allen\textendash Cahn equation, an idea we will return to in our analysis in this paper, they consider a modified Mumford--Shah energy using the Ginzburg--Landau function in place of total variation and devise an MBO-like scheme based on a modified diffusion followed by a thresholding to minimise this energy.

Inspired by these techniques, in \cite{BF} Bertozzi and Flenner embraced the discrete nature of an image and devised a discrete graph-based method for image segmentation (and related topics, such as semi-supervised learning) using the graph Ginzburg\textendash Landau functional (with symmetric normalised Laplacian). In \cite{MKB} Merkurjev, Kosti\'c and Bertozzi developed a faster variant of this method by employing the MBO scheme on a graph, motivated by the link between continuum Ginzburg\textendash Landau and MBO through their association with mean curvature flow. They also extended this method to apply to non-local image inpainting. An example of an application of these techniques is recent work by Calatroni, Van Gennip, Sch{\"o}nlieb, Rowland and Flenner \cite{birdspot}.

The use of these methods implicitly assumes that the continuum connections between these processes extend to their graph counterparts. But a challenge to this assumption is that graphs need not resemble the continuum objects to which the above convergence results apply. For example there has been interest, though not to the authors' knowledge any published work, in applying the above graph methods to social networks\textemdash which need not resemble meshes on continuum manifolds in structure. We seek to investigate rigorously the validity of this assumption. 

Our framework for this investigation is that developed in \cite{vGGOB}. In that paper, Van Gennip, Guillen, Osting and Bertozzi formally defined the key concepts for analysis on graphs, and defined the Allen--Cahn flow, MBO scheme, and mean curvature flow on a graph. They proved rigorous results about each of these flows individually, particularly about the conditions under which these flows ``pin'' (or ``freeze"). 
Finally, they briefly raise (but do not prove any results towards) a number of questions for future research, including whether and how the three flows are linked. 
This paper will present substantial new work towards answering some of these questions. 
%[[DESCRIBE 1 IN MORE DETAIL]]
%TEST TEST TEST TEST TEST TEST TEST TEST TEST TEST TEST TEST TEST TEST
%TEST TEST TEST TEST TEST TEST TEST TEST TEST TEST TEST TEST TEST TEST
%TEST TEST TEST TEST TEST TEST TEST TEST TEST TEST TEST TEST TEST TEST
%TEST TEST TEST TEST TEST TEST TEST TEST TEST TEST TEST TEST TEST TEST
%TEST TEST TEST TEST TEST TEST TEST TEST TEST TEST TEST TEST TEST TEST
\subsection{Paper outline} We here give a brief overview of the rest of this paper.

In section \ref{groundwork}, we introduce our notation for analysis on graphs, introduce the graph Allen--Cahn flow and MBO scheme, and sketch non-rigorously the link we will derive in this paper.

In section \ref{ACsec}, we define rigourously graph Allen--Cahn flow with the double obstacle potential, and prove various results concerning this flow, relegating some proofs to appendices \ref{existsec} and \ref{uniquesec}. 

In section \ref{SDsec}, we define the semi-discrete scheme for this flow, and prove that the MBO scheme is a special case of it. We derive a Lyapunov functional for this scheme, investigate the long-time behaviour, and relate the scheme to a time-splitting of the Allen--Cahn flow.

In section \ref{SDconvsec}, we prove that trajectories of the semi-discrete scheme converge pointwise to Allen--Cahn trajectories as the time-step converges to zero.

Finally, in section \ref{MCFsec} (and appendices \ref{uniquesec} and \ref{compsec}) we discuss linking graph mean curvature flow to the graph MBO scheme and Allen--Cahn flow. We present encouraging $\Gamma$-convergence results, and prove graph analogues of a pair of comparison principles from Chen and Elliott's \cite{CE} convergence proof of continuum double-obstacle Allen--Cahn flow to mean curvature flow.  
\section{Groundwork}\label{groundwork}
The framework for analysis on graphs is presented in \cite{vGGOB}, we reproduce here those aspects needed for our discussion. Let $G=(V,E)$ be a finite, undirected, weighted, simple and connected graph with vertex set $V$, edge set $E\subseteq V^2$, and positive weights $\{\omega_{ij}\}_{ij\in E}$ with $\omega_{ij}=\omega_{ji}$. We extend $\omega_{ij}=0$ when $ij\notin E$. On $G$ we define the spaces ($X\subseteq \mathbb{R}$): 
\begin{align*}
	&\V := \left\{ u: V\rightarrow\mathbb{R} \right\} , &\V_{X} := \left\{ u: V\rightarrow X \right\}&,  &\mathcal{E} := \left\{ \varphi: E\rightarrow\mathbb{R} |\varphi_{ij} = -\varphi_{ji} \right\}.&
\end{align*}
Since $V$ is finite, we shall interchangeably view these as functions and as real vectors. Next, we define the spaces of time-dependent vertex functions (where $T\subseteq\mathbb{R}$ an interval)
\begin{align*}
	&\V_{t\in T} := \left\{ u: T\rightarrow\V \right\} , &\V_{X,t\in T} := \left\{ u: T\rightarrow \V_X \right\}.&
\end{align*} For a parameter $r\in [0,1]$, and denoting $d_i:=\sum_j \omega_{ij}$, which we refer to as the \emph{degree} of vertex $i$,  we define the following inner products on $\V$ and $\mathcal{E}$:\footnote{In this paper we take, so as to preserve the $\Gamma$-convergence result in \cite{vGB}, $q=1$ in the definitions in \cite{vGGOB}.}
\begin{align*}
	&\ip{u,v} := \sum_{i\in V} u_i v_i d_i^r, &\langle\varphi,\phi\rangle_\mathcal{E}:=\frac{1}{2}\sum_{i,j\in V} \varphi_{ij} \phi_{ij}\omega_{ij}&\end{align*} and define the inner product on $\V_{t\in T}$ (or $\V_{X,t\in T}$): \[ (u,v)_{t\in T}:=\int_T \left\langle u(t),v(t)\right\rangle_\V \;dt = \sum_{i\in V} d_i^r (u_i,v_i)_{L^2(T;\mathbb{R})}\] where $(\cdot,\cdot)_{L^2(T;\mathbb{R})}$ is the standard continuum $L^2$ inner product.
These induce norms $||\cdot||_\V$, $||\cdot||_\mathcal{E}$, and $||\cdot||_{t\in T}$ in the usual way. We also define for $u\in \V$ the norm $ ||u||_\infty := \max_{i\in V} |u_i|$.
We next define the $L^2$ space: \[L^2(T;\V) : =\left\{ u\in\V_{t\in T} \, | \, ||u||_{t\in T} <\infty \right\}.\]
Finally, for $T$ an open interval, we define the \emph{Sobolev space} $H^1(T;\V)$ as the set of $u \in L^2(T;\V)$ with generalised time derivative $du/dt \in L^2(T;\V)$ such that  
\[  \forall \varphi\in C^\infty_c(T;\V)\:\:\left(u,\frac{d\varphi}{dt}\right)_{t\in T} = -\left(\frac{du}{dt},\varphi\right)_{t\in T} \] 
where $C^\infty_c(T;\V)$ denotes the infinitely differentiable and compactly supported elements of $\V_{t\in T}$. 
We link this to the familiar continuum $H^1$.
\begin{prop}\label{graphH1}
$u\in H^1(T;\V)$ if and only if $u_i \in H^1(T;\mathbb{R})$ for each $i\in V$.
\end{prop}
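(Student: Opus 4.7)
The plan is to exploit the finite-dimensionality of $\V$ to reduce the graph-valued Sobolev condition to a componentwise scalar Sobolev condition, using the fact that $\V$ admits a basis $\{e^{(j)}\}_{j\in V}$ of indicator vertex functions with $(e^{(j)})_i = \delta_{ij}$.

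For the forward direction, suppose $u \in H^1(T;\V)$. First observe that $u \in L^2(T;\V)$ means $\|u\|_{t\in T}^2 = \sum_{i\in V} d_i^r \|u_i\|_{L^2(T;\mathbb{R})}^2 < \infty$, so each $u_i \in L^2(T;\mathbb{R})$. Similarly each component of $du/dt$ is in $L^2(T;\mathbb{R})$. To identify these component functions with the scalar generalised derivatives, I would plug in the test function $\varphi(t) = \psi(t) e^{(j)}$ for arbitrary $\psi \in C^\infty_c(T;\mathbb{R})$ and fixed $j \in V$ into the weak derivative identity. The inner products collapse onto the $j$-th coordinate and a factor $d_j^r$ pulls out on both sides, yielding $(u_j, d\psi/dt)_{L^2(T;\mathbb{R})} = -((du/dt)_j, \psi)_{L^2(T;\mathbb{R})}$. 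This is exactly the defining property of the scalar generalised derivative, so $u_j \in H^1(T;\mathbb{R})$ with $du_j/dt = (du/dt)_j$.

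For the reverse direction, suppose each $u_i \in H^1(T;\mathbb{R})$ with scalar derivative $du_i/dt \in L^2(T;\mathbb{R})$. Define $v \in \V_{t\in T}$ by $v(t)_i := du_i/dt$. Finiteness of $V$ together with $\|v\|_{t\in T}^2 = \sum_{i\in V} d_i^r \|du_i/dt\|_{L^2(T;\mathbb{R})}^2 < \infty$ gives $v \in L^2(T;\V)$. For any $\varphi \in C^\infty_c(T;\V)$, each component $\varphi_i \in C^\infty_c(T;\mathbb{R})$ (here I would briefly note that smoothness and compact support of a $\V$-valued function are equivalent to the same for each component, again by finite-dimensionality), so I can unpack the definitions and use the scalar weak derivative identity componentwise:
\[
\left(u, \frac{d\varphi}{dt}\right)_{t\in T} = \sum_{i \in V} d_i^r \left(u_i, \frac{d\varphi_i}{dt}\right)_{L^2(T;\mathbb{R})} = -\sum_{i \in V} d_i^r \left(\frac{du_i}{dt}, \varphi_i\right)_{L^2(T;\mathbb{R})} = -\left(v, \varphi\right)_{t\in T},
\]
so $v$ serves as the generalised derivative $du/dt$, whence $u \in H^1(T;\V)$.

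No step looks like a genuine obstacle; the entire argument is driven by the fact that $V$ is finite and the inner products on $\V$ and $L^2(T;\V)$ decouple into weighted sums over vertices. The only mild care point is confirming that $\varphi = \psi e^{(j)}$ lies in $C^\infty_c(T;\V)$ and that an arbitrary $\varphi \in C^\infty_c(T;\V)$ has each component in $C^\infty_c(T;\mathbb{R})$, but both are immediate once one identifies $\V$-valued maps on $T$ with $|V|$-tuples of scalar maps on $T$.
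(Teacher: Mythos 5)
Your proposal is correct and follows essentially the same route as the paper: both reduce the $\V$-valued weak-derivative identity to its scalar components via the decoupling of the inner product into a weighted sum over vertices. The only difference is that you spell out the two implications (testing against $\psi e^{(j)}$, then summing componentwise) that the paper's proof compresses into a single ``if and only if.''
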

\begin{proof}Note that $(du/dt)_i = du_i/dt$, so $u$ and $du/dt\in L^2(T;\V)$ if and only if $\forall i\in V$, $u_i$ and $du_i/dt\in L^2(T;\mathbb{R})$. 
Next, $(u,d\varphi/dt)_{t\in T} =  -(du/dt,\varphi)_{t\in T}$ if and only if \[\sum_{i\in V} d_i^r (u_i,d\varphi_i/dt)_{L^2(T;\mathbb{R})}=-\sum_{i\in V} d_i^r (du_i/dt,\varphi_i)_{L^2(T;\mathbb{R})}.\]
It follows that $\forall \varphi\in C^\infty_c(T;\V) \; (u,d\varphi/dt)_{t\in T} =  -(du/dt,\varphi)_{t\in T}$ if and only if \[ \forall i\in V\;\forall \phi\in C^\infty_c(T;\mathbb{R}) \:\:(u_i,d\phi/dt)_{L^2(T;\mathbb{R})}=-(du_i/dt,\phi)_{L^2(T;\mathbb{R})}\] and therefore $\forall i\in V$ $u_i\in H^1(T;\mathbb{R})$.
\end{proof}
We define the following inner product on $H^1(T;\V)$:\[ (u,v)_{H^1(T;\V)} := (u,v)_{t\in T} + \left(\frac{du}{dt},\frac{dv}{dt}\right)_{t\in T} = \sum_{i\in V} d_i^r (u_i,v_i)_{H^1(T;\mathbb{R})}.\] We also define the local $H^1$ space \[ H^1_{loc}(T;\V) :=\left\{u\in \V_{t\in T}\,\middle|\,\forall a,b\in T, \: u\in H^1((a,b);\V) \right\}\] and we likewise define $L^2_{loc}(T;\V)$. 

Next, we introduce the graph variants of familiar vector calculus gradient and Laplacian:
\begin{align*}
	&(\nabla u)_{ij}:=\begin{cases}u_j -u_i, & ij\in E\\ 0, &\text{otherwise} \end{cases} &(\Delta u)_i:=d_i^{-r}\sum_{j\in V}\omega_{ij}(u_i-u_j)&
\end{align*}
where the graph Laplacian\footnote{Our choice of $r$ dictates which graph Laplacian we use. For $r=0$ we have $\Delta = D -A$ the standard \emph{unnormalised Laplacian}. For $r=1$ we have $\Delta=I-D^{-1}A$ the \emph{random walk Laplacian}. Note that the \emph{symmetric normalised Laplacian} $I-D^{-1/2}AD^{-1/2}$ used in \cite{BF,MKB} is not covered by our scheme.} $\Delta$ is positive semi-definite, unlike the negative semi-definite continuum Laplacian. From this we define the \emph{graph diffusion operator} \[e^{-t\Delta}u:=\sum_{n\geq 0} \frac{(-1)^n t^n}{n!}\Delta^n u\] where $v(t)=e^{-t\Delta}u $  is the unique solution to the diffusion equation \[ \frac{dv}{dt} = -\Delta v, \: v(0) = u.\] We recall the familiar functional analysis notation, for some $F:\V\rightarrow\V$, of
\begin{align*} &\rho(F):=\max\{|\lambda| :\text{$\lambda$ an eigenvalue of $F$}\}\\
&||F|| := \sup_{||u||_\V = 1} ||Fu||_\V
\end{align*}
and recall the standard result that if $F$ is self-adjoint then $||F|| = \rho(F)$.\begin{prop}
	If $u\in H^1(T;\V)$ and $T$ bounded below, then $e^{-t\Delta}u\in H^1(T;\V)$ with \[\frac{d}{dt}\left(e^{-t\Delta}u\right) = e^{-t\Delta}\frac{du}{dt} -  e^{-t\Delta}\Delta u.\]
\end{prop}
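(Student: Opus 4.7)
The plan is to reduce the claim to scalar $H^1$ calculus by diagonalising $\Delta$. A direct computation using $\omega_{ij}=\omega_{ji}$ shows that $\Delta$ is self-adjoint with respect to $\langle\cdot,\cdot\rangle_\V$; since $\V$ is finite-dimensional, there exists an orthonormal eigenbasis $\{\phi_k\}_{k=1}^{|V|}$ of $\V$ with eigenvalues $\lambda_k \geq 0$. Expanding $u(t) = \sum_k c_k(t)\phi_k$, Proposition \ref{graphH1} (after the linear change of basis from the standard basis $\{e_i\}$ to $\{\phi_k\}$, which preserves $H^1$-membership of coordinates) gives $c_k \in H^1(T;\mathbb{R})$ for each $k$. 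In this basis $e^{-t\Delta}u(t) = \sum_k e^{-t\lambda_k}c_k(t)\phi_k$, so the vector-valued claim decouples into scalar claims about the functions $f_k(t) := e^{-t\lambda_k}c_k(t)$.

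For each fixed $k$, the map $t \mapsto e^{-t\lambda_k}$ is $C^\infty$ on $\mathbb{R}$. The hypothesis that $T$ is bounded below, say by $a := \inf T$, is exactly what is needed here: it ensures $|e^{-t\lambda_k}| \leq e^{-a\lambda_k}=:M_k$ and $|{-\lambda_k}e^{-t\lambda_k}| \leq \lambda_k M_k$ uniformly for $t\in T$, so multiplication by $e^{-t\lambda_k}$ maps $H^1(T;\mathbb{R})$ into itself. The standard scalar product rule for Sobolev functions then gives $f_k \in H^1(T;\mathbb{R})$ with
\[
f_k'(t) = -\lambda_k e^{-t\lambda_k} c_k(t) + e^{-t\lambda_k} c_k'(t).
\]

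Reassembling in the eigenbasis yields $e^{-t\Delta}u \in H^1(T;\V)$ by (the easy direction of) Proposition \ref{graphH1} applied in the $\{\phi_k\}$ basis, and
\[
\frac{d}{dt}\bigl(e^{-t\Delta}u(t)\bigr) = -\sum_k \lambda_k e^{-t\lambda_k} c_k(t)\phi_k + \sum_k e^{-t\lambda_k} c_k'(t)\phi_k.
\]
The second sum is $e^{-t\Delta}(du/dt)$ by definition. In the first sum, since $\Delta\phi_k = \lambda_k\phi_k$ and hence $e^{-t\Delta}\Delta \phi_k = \lambda_k e^{-t\lambda_k}\phi_k$, we recognise $\sum_k \lambda_k e^{-t\lambda_k}c_k(t)\phi_k = e^{-t\Delta}\Delta u(t)$, giving the stated formula.

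The only real obstacle is technical bookkeeping: confirming that the generalised-derivative identity lifts coordinate-wise from $\mathbb{R}$ to $\V$. This is handled uniformly by the finite-dimensional eigendecomposition above, which turns every vector-valued integral against a test function $\varphi \in C^\infty_c(T;\V)$ into a finite sum of scalar integrals against the $\phi_k$-components of $\varphi$, on each of which the scalar $H^1$ identity applies.
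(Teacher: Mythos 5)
Your proof is correct, but it takes a genuinely different route from the paper's. You diagonalise $\Delta$ in an orthonormal eigenbasis (legitimate, since $\Delta$ is self-adjoint with respect to $\ip{\cdot,\cdot}$) and reduce the claim to the one-dimensional Sobolev product rule applied to each coordinate $f_k(t)=e^{-t\lambda_k}c_k(t)$, invoking Proposition \ref{graphH1} (transported to the eigenbasis) to pass between scalar and vector statements. The paper instead never decomposes $u$: it verifies the weak-derivative identity by duality, testing the candidate derivative against $\varphi\in C^\infty_c(T;\V)$, using self-adjointness of $e^{-t\Delta}$ to move the operator onto the test function, and then applying only the \emph{classical} product rule to the smooth compactly supported function $t\mapsto e^{-t\Delta}\varphi(t)$ before unwinding via the definition of $du/dt$. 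Both arguments use the same two essential inputs (self-adjointness, and the spectral bound $\lVert e^{-t\Delta}\rVert\le\max\{1,e^{-a\lVert\Delta\rVert}\}$ from $T$ being bounded below, which you correctly identify as the role of that hypothesis). What your approach buys is elementarity — everything reduces to standard facts about $H^1(T;\mathbb{R})$ — at the cost of an extra change-of-basis bookkeeping step and reliance on the Sobolev product rule; the paper's duality argument is shorter, needs no eigendecomposition of $u$, and is the form of argument that would survive in an infinite-dimensional setting where an orthonormal eigenbasis may not be available so cheaply. The one point worth making explicit in your write-up is that the Sobolev product rule you cite requires the multiplier $e^{-t\lambda_k}$ to lie in $W^{1,\infty}(T)$, which is exactly what your bounds $M_k$ and $\lambda_k M_k$ establish, so no gap remains.
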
\begin{proof} Let $T =(a,b)$ with $a>-\infty$.  Now, $e^{-t\Delta}$ has eigenvalues $e^{-\lambda_k t}$, for $\lambda_k\geq 0$ the eigenvalues of $\Delta$, and $e^{-t\Delta}$ is self-adjoint so $||e^{-t\Delta}||=\rho(e^{-t\Delta})\leq \max\left\{1,e^{-a||\Delta||}\right\}$ for $t\in T$. So $e^{-t\Delta}$ is a uniformly bounded operator for $t\in T$ and therefore $||e^{-t\Delta}u||_{t\in T}<\infty$ and $||e^{-t\Delta}\frac{du}{dt} -  e^{-t\Delta}\Delta u||_{t\in T}<\infty$. Then for $\varphi\in C^\infty_c(T;\V)$ \begin{align*}
	\left(e^{-t\Delta}\frac{du}{dt} -  e^{-t\Delta}\Delta u,\varphi\right)_{t\in T} &= \left(\frac{du}{dt},e^{-t\Delta}\varphi\right)_{t\in T} - \left(u,e^{-t\Delta}\Delta\varphi\right)_{t\in T}\\
	&= -\left(u,\frac{d}{dt}\left(e^{-t\Delta}\varphi\right)+e^{-t\Delta}\Delta\varphi\right)_{t\in T}\\
	&= - \left(u,e^{-t\Delta}\frac{d\varphi}{dt}\right)_{t\in T} = - \left(e^{-t\Delta}u,\frac{d\varphi}{dt}\right)_{t\in T}
\end{align*}
so $e^{-t\Delta}u$ has the desired generalised derivative.
\end{proof}

Finally, when considering variational problems of the form \[ \underset{x}{\argmin}\: f(x) \] we will write $f \simeq g$ and say the functionals are \emph{equivalent} when $g(x) = af(x) + b$ for $a,b$ independent of $x$ and $a>0$. This ensures that $f$ and $g$ have the same minimisers.

We now define the two processes we wish to link in this paper.
\begin{mydef}[Graph MBO scheme] The \emph{graph Merriman\textendash Bence\textendash Osher scheme (MBO)} is a scheme that creates a series of vertex sets \emph{(}i.e. elements of $\V_{\{0,1\}}$\emph{)} by iterating two steps. First, diffuse the characteristic function $\chi_{S_n}$ of the set $S_n\subseteq V$ for a time $\tau$ to yield $v = e^{-\tau\Delta}\chi_{S_n}$. Second, threshold to define $S_{n+1} = \{i\in V|v_i\ge 1/2\}$. In \emph{\cite[Proposition 4.6]{vGGOB}} it was shown that this scheme can be expressed variationally, where $u_n=\chi_{S_n}$, by 
\begin{equation}
	\label{MBO}
	u_{n+1}\in \underset{u\in\V_{[0,1]}}{\argmin}\: \left\langle \mathbf{1}-2e^{-\tau\Delta}u_n,u\right\rangle_\V
\end{equation} 
\emph{(}where $\mathbf{1}$ is the vector of ones\emph{)} which we can rewrite with the equivalent functional\emph{:}\footnote{One can check that $\left\langle \mathbf{1}-2e^{-\tau\Delta}u_n,u\right\rangle_\V=\ip{u,\mathbf{1}-u}+\ip{u-e^{-\tau\Delta}u_n,u-e^{-\tau\Delta}u_n}-\ip{e^{-\tau\Delta}u_n,e^{-\tau\Delta}u_n}$. Then suppress the constant (in $u$) term $\ip{e^{-\tau\Delta}u_n,e^{-\tau\Delta}u_n}$ and divide by $2\tau$.}
\begin{equation}
		\label{MBOMM}
	u_{n+1}\in\underset{u\in\V_{[0,1]}}{\argmin}\: \frac{1}{2\tau}\ip{\mathbf{1}-u,u} + \frac{\left|\left|u-e^{-\tau\Delta}u_n\right|\right|^2_\V}{2\tau}. 
\end{equation}
\end{mydef}
\begin{nb}
Note that \eqref{MBOMM} has a form resembling a \emph{discrete solution} \emph{\cite[Definition 2.0.2]{AGS2008}  (}cf. the study of minimising movements\emph{)} of a gradient flow. That is, it resembles a sequence arising from an Euler scheme for a gradient flow. This motivates our link to Allen\textendash Cahn flow.
\end{nb}
\begin{mydef}[Graph Allen--Cahn flow]
\emph{Graph Allen\textendash Cahn (AC) flow} is the $\ip{\cdot,\cdot}$ gradient flow of the \emph{graph Ginzburg\textendash Landau functional}, which we shall define as\emph{:} 
\begin{equation}
	\label{GL}
	\GL(u) := \frac{1}{2}\left|\left|\nabla u\right|\right|_\mathcal{E}^2 +\frac{1}{\varepsilon}\left\langle W\circ u,\mathbf{1}\right \rangle_\V
\end{equation}
where $W$ is a double-well potential. This definition slightly differs from that in \emph{\cite{vGGOB}:} we have replaced their $\sum_i W(u_i)$ with $\left\langle W\circ u,\mathbf{1}\right \rangle_\V$, which we have found plays better with the Hilbert space structure and enables the link we derive with the MBO scheme. The AC flow is then given, for $W$ differentiable, by the ODE\emph{:}
\begin{equation}
	\label{AC}
	\frac{du}{dt} = -\Delta u - \frac{1}{\varepsilon} W'\circ u = -\nabla_\V\GL(u)
\end{equation}
where $\nabla_\V$ is the Hilbert space gradient on $\V$. 
\end{mydef}
We now describe informally the link which we shall make rigorous in the rest of this paper. To link the AC flow to the MBO scheme, we must discretise it in time. Note that the MBO scheme, although discrete in time, thresholds after a continuous-time diffusion. To capture this, we introduce what we term a \emph{semi-discrete} implicit Euler scheme:
\begin{equation}
	\label{sdAC}
	u_{n+1}=e^{-\tau\Delta} u_n- \frac{\tau}{\varepsilon} W'\circ u_{n+1}.
\end{equation}
To link to \eqref{MBOMM} we express \eqref{sdAC} as a discrete solution. We rewrite \eqref{sdAC} using the Hilbert space gradient as: 
\[ 0 = \nabla_\V|_{u=u_{n+1}}\left( \frac{1}{\varepsilon}\left\langle W\circ u,\mathbf{1}\right \rangle_\V + \frac{\left|\left|u-e^{-\tau\Delta}u_n\right|\right|^2_\V}{2\tau}
\right)\] suggesting that solutions to our semi-discrete scheme obey the variational equation: \begin{equation}
	\label{ACMM}
	\begin{split}
	u_{n+1}\in \underset{u\in \V}{ \argmin }  \: &\frac{1}{\varepsilon}\left\langle W\circ u,\mathbf{1}\right \rangle_\V + \frac{\left|\left|u-e^{-\tau\Delta}u_n\right|\right|^2_\V}{2\tau}.
	%\\							   \simeq
							   %\footnotemark\:&\frac{1}{\varepsilon}\left\langle W\circ u,1\right \rangle_\V +\left\langle u,\frac{u_n-e^{-\tau\Delta}u_n}{\tau}\right \rangle_\V + \frac{\left|\left|u-u_n\right|\right|^2_\V}{2\tau}.
	\end{split}
\end{equation}%\footnotetext{By the Pythagorean identity $\left|\left|u-A\right|\right|^2_\V= \left|\left|u-B\right|\right|^2_\V + 2\ip{u,B-A} -2\ip{B,B-A} + \left|\left|B-A\right|\right|^2_\V \\ \simeq \left|\left|u-B\right|\right|^2_\V + 2\ip{u,B-A}$, suppressing terms constant in $u$, and so the equivalence follows.}
Now, note for $\varepsilon=\tau$ the striking similarity between the functionals in \eqref{MBOMM} and \eqref{ACMM}, so long as we choose a suitable $W$. Inspecting \eqref{MBOMM} suggests taking $W(x) = \frac{1}{2}x(1-x)$. But this does not have two wells (or indeed, any minima). We also wish to force minimisers of \eqref{ACMM} to lie in $\V_{[0,1]}$ as in \eqref{MBOMM}. A potential that satisfies these demands is the well-known \emph{double-obstacle potential}
%\footnote{The use of the double-obstacle potential in graph Allen\textendash Cahn (with fidelity) has also recently been explored by Bosch, Klamt and Stoll \cite{BKS2018}.}
studied extensively by Blowey and Elliott \cite{BE1991,BE1992,BE1993}: \begin{equation}
	\label{Wobs}
	W(x) := \begin{cases}
    \frac{1}{2}x(1-x), & \text{for } 0 \leq x \leq 1, \\
    \infty, & \text{otherwise.}  \end{cases}
\end{equation}
%\hrule
In the remainder of this paper $W$ will denote this double-obstacle potential. This discussion then suggests that for $\varepsilon=\tau$ the semi-discrete scheme for the AC flow \emph{is} the MBO scheme.
%This paper will prove and explore the following theorem.
%\begin{thm}
%\label{thm1}
%Taking $\varepsilon=\tau$ and choosing $W$ as in \eqref{Wobs}, we get that solutions to our semi-discrete scheme \eqref{SDobs}, which is \eqref{sdAC} refined to apply to a non-differentiable $W$, obey the variational equation\emph{:}
%\be
% \begin{split}
%	u_{n+1}\in \underset{u\in \V_{[0,1]}}{ \argmin }  \: &\left\langle u,\mathbf{1}-u\right \rangle_\V +  \left|\left|u-e^{-\tau\Delta}u_n\right|\right|^2_\V\\
%							   %=\:&\ip{u,1}-\ip{u,u}+\ip{u,u}-2\left\langle u,e^{-\tau\Delta}u_n\right \rangle_\V + \left\langle e^{-\tau\Delta}u_n,e^{-\tau\Delta}u_n\right \rangle_\V\\
%							\simeq\:&\left\langle u,\mathbf{1}-2e^{-\tau\Delta}u_n\right \rangle_\V
%\end{split}
%\ee
%and thus the solutions correspond exactly to MBO trajectories.
%\end{thm}
\section{Allen\textendash Cahn evolution with a double-obstacle potential}\label{ACsec}
However, our definition of the AC flow in \eqref{AC} assumed that $W$ was differentiable, which of course the double-obstacle potential is not at 0 and 1. 
Towards extending our definition, write \[W(x) = \frac{1}{2}x(1-x) + I_{[0,1]}(x)\] where $I_{[0,1]}$ is the indicator function taking value 0 on $[0,1]$ and $\infty$ elsewhere. Now, following \cite{BE1993} we seek $H^1_{loc}$ solutions $u$ to \eqref{AC} rewritten using the subdifferential as:
\begin{equation}
	\label{ACobs}
	-\frac{du}{dt}-\Delta u \in \frac{1}{\varepsilon} \partial W(u).
\end{equation}
That is, for almost every $t$ in some chosen interval $T$, and every $i\in V$, we desire
\begin{equation}
\label{ACobs1}
	\varepsilon \frac{du_i}{dt} + \varepsilon(\Delta u(t))_i  +\frac{1}{2}-u_i(t)= \beta_i(t)\in -\partial I_{[0,1]}(u_i(t))
\end{equation}
where the condition on $\beta$ can be written more transparently as
\[ \beta_i(t)\in\begin{cases}
		\{\infty\}, & u_i<0,\\
		[0,\infty), & u_i(t)=0,\\
		\{0\}, &0<u_i(t) < 1,\\
		(-\infty,0], & u_i(t)=1,\\
		\{-\infty\}, &u_i>1.
	\end{cases}\]
Notice that this expression only makes sense for trajectories such that $u(t)\in\V_{[0,1]}$ at a.e. $t$, as $\partial I_{[0,1]}(x)$ has no real values for $x\notin[0,1]$. For tidyness of notation, we define 
\begin{equation}
\label{beta}
	 \mathcal{B}(u) :=\left\{  \alpha\in\V\: \middle|\: \forall i\in V: \alpha_i\in -\partial I_{[0,1]}(u_i)
	\right\}
\end{equation}
which is non-empty if and only if $u\in\V_{[0,1]}$. 

We can in fact characterise $\beta$ more exactly, as a.e. an explicit function of $u$, so the AC flow remains a differential equation rather than a differential inclusion. We first note a standard fact about continuous representatives of $H^1$ functions.
\begin{lem}\label{H1AClem}
If $u\in H^1_{loc}(T;\V)\cap C^0(T;\V)$, then $u$ is locally absolutely continous on $T$. Hence $u$ is differentiable with weak derivative equal to the classical derivative a.e. in $T$.
\end{lem}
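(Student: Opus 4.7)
The plan is to reduce to the classical one-dimensional real-valued result by working component-wise, using Proposition \ref{graphH1} to transfer between $\V$-valued and real-valued $H^1$ regularity.

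First, I would fix an arbitrary bounded open interval $(a,b) \subset T$ with $[a,b]\subset T$. By Proposition \ref{graphH1} applied on $(a,b)$, the hypothesis $u\in H^1_{loc}(T;\V)$ gives $u_i\in H^1((a,b);\mathbb{R})$ for every $i\in V$, and likewise the hypothesis $u\in C^0(T;\V)$ gives $u_i\in C^0((a,b);\mathbb{R})$. It then suffices to show that each $u_i$ is absolutely continuous on $[a,b]$, since $V$ is finite: given $\epsilon>0$, choose $\delta_i$ witnessing absolute continuity of $u_i$ for the common target $\epsilon/\sqrt{|V|\max_j d_j^r}$, and take $\delta=\min_i \delta_i$; then disjoint intervals of total length $<\delta$ produce a sum controlled in each component, hence in $\|\cdot\|_\V$ (noting that on $\V\cong\mathbb{R}^{|V|}$ all norms are equivalent).

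Next, I would invoke the standard real-variable fact that every element of $H^1((a,b);\mathbb{R})$ admits a unique absolutely continuous representative $\tilde u_i$, whose classical derivative coincides a.e. with the weak derivative. Since $u_i$ is itself continuous on $(a,b)$ and agrees with $\tilde u_i$ almost everywhere, the continuity of both forces $u_i=\tilde u_i$ everywhere on $(a,b)$, and hence on $[a,b]$ by continuity. Thus $u_i$ is absolutely continuous on $[a,b]$. Since $(a,b)\subset T$ was arbitrary, $u$ is locally absolutely continuous on $T$.

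Finally, the Lebesgue differentiation theorem says any absolutely continuous function on an interval is differentiable a.e., with classical derivative equal almost everywhere to its weak derivative. Applying this component-wise, each $u_i$ is classically differentiable a.e.\ on $T$ with $du_i/dt$ equal a.e.\ to the weak derivative, and since $(du/dt)_i = du_i/dt$, the same holds for $u$ as a $\V$-valued map. There is no serious obstacle here: the only point of mild care is verifying that a continuous function coinciding a.e.\ with the AC representative must equal it pointwise, which is immediate from continuity and density of the equality set.
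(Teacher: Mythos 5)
Your proof is correct and takes essentially the same route as the paper: reduce component-wise via Proposition \ref{graphH1} and then invoke the standard one-dimensional result that a continuous $H^1((a,b);\mathbb{R})$ function coincides with its absolutely continuous representative and is hence differentiable a.e.\ with classical derivative equal to the weak one (the paper simply cites \cite[Theorem 7.13]{Leoni} for this). The extra details you supply (assembling component-wise absolute continuity into $\|\cdot\|_\V$-absolute continuity, and the a.e.-agreement-plus-continuity argument) are fine, though note the a.e.\ differentiability of absolutely continuous functions is the fundamental theorem of calculus for the Lebesgue integral rather than the Lebesgue differentiation theorem proper.
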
 
\begin{proof} By Proposition \ref{graphH1}, $u\in H^1_{loc}(T;\V)\cap C^0(T;\V)$ if and only if for all $i\in V$, $u_i\in H^1_{loc}(T;\mathbb{R})\cap C^0(T;\mathbb{R})$. The result then follows from standard results, cf. \cite[Theorem 7.13]{Leoni}.
\end{proof}
\begin{thm}\label{betathm}
Let $(u,\beta)$ obey \eqref{ACobs1} at a.e. $t\in T$, with $u\in H^1_{loc}(T;\V)\cap C^0(T;\V)\cap\V_{[0,1],t\in T}$. Then for all $i\in V$ and a.e. $t\in T$, \be\label{beta2} \beta_i(t) = \begin{cases} \frac{1}{2} +\varepsilon(\Delta u(t))_i, &u_i(t)=0,\\
0, & u_i(t)\in(0,1),\\
-\frac{1}{2} +\varepsilon(\Delta u(t))_i, & u_i(t) =1.\end{cases}\ee
\end{thm}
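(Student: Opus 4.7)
The plan is to treat each vertex $i\in V$ separately (since $V$ is finite, finitely many ``a.e.'' exceptions pose no problem) and split into the three cases $u_i(t)\in(0,1)$, $u_i(t)=0$, and $u_i(t)=1$, which exhaust the possibilities because $u\in\V_{[0,1],t\in T}$. The middle case is immediate: $-\partial I_{[0,1]}(x)=\{0\}$ for $x\in(0,1)$, so the subdifferential condition in \eqref{ACobs1} forces $\beta_i(t)=0$, matching the second line of \eqref{beta2}.

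The two boundary cases are symmetric, so I focus on $u_i(t)=0$. The main step is to show that $du_i/dt=0$ at almost every $t$ in the contact set $E_i:=\{t\in T: u_i(t)=0\}$. By Lemma \ref{H1AClem}, $u_i$ is locally absolutely continuous, hence differentiable at a.e.\ $t\in T$ with classical derivative agreeing with the weak derivative. Then I would apply the standard real-analysis fact that an absolutely continuous function has vanishing derivative at almost every point of any level set. The quick pointwise justification I have in mind: at any Lebesgue density point $t$ of $E_i$ where the classical derivative exists, pick $t_n\in E_i$ with $t_n\to t$, $t_n\neq t$; then $(u_i(t_n)-u_i(t))/(t_n-t)=0$ for every $n$, forcing $u_i'(t)=0$. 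Since almost every point of $E_i$ is a density point of $E_i$, this proves the claim.

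With $du_i/dt=0$ and $u_i(t)=0$ substituted into \eqref{ACobs1}, I obtain
\[
\beta_i(t)=\tfrac{1}{2}+\varepsilon(\Delta u(t))_i
\]
at a.e.\ $t\in E_i$, i.e.\ the first line of \eqref{beta2}. The same argument with $E_i$ replaced by $\{t\in T:u_i(t)=1\}$ yields the third line. Taking the union over $i\in V$ of the (finitely many) null sets on which either \eqref{ACobs1} fails, $du_i/dt$ does not exist classically, or the density-point argument fails, gives the desired a.e.\ statement.

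The only real content here is the level-set derivative lemma; the subdifferential conditions at $u_i\in\{0,1\}$ by themselves only constrain $\beta_i$ to a half-line, and what pins it down is precisely the fact that $u_i$ cannot actively move while stuck at an obstacle. So I expect no genuine obstacle in the proof beyond a careful invocation of this standard real-analysis fact together with Lemma \ref{H1AClem}.
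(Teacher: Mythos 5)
Your proof is correct, and its overall structure coincides with the paper's: split by vertex and by the three cases, dispose of the interior case via the subdifferential, reduce the boundary cases to showing $du_i/dt=0$ a.e.\ on the contact set using Lemma \ref{H1AClem}, and then read off $\beta_i$ from \eqref{ACobs1}. The one place you diverge is the justification of $du_i/dt=0$ on $\{t: u_i(t)=0\}$. You invoke the general fact that an absolutely continuous function has vanishing derivative at a.e.\ point of any level set, proved via Lebesgue density points. The paper instead exploits the constraint $u(t)\in\V_{[0,1]}$ directly: since $u_i\ge 0$ everywhere and $u_i(t)=0$, the point $t$ is a minimum of $u_i$, so the classical derivative (where it exists) vanishes at \emph{every} differentiability point of the contact set, with no appeal to the density theorem. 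The paper's route is marginally shorter and more elementary for this particular level set, since $0$ and $1$ are extremal values; your route is more general (it would apply to any level set, extremal or not) at the cost of one extra standard tool. Either argument closes the gap, and your bookkeeping of the null sets over the finitely many vertices is fine.
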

\begin{proof}Since $\beta(t)\in\mathcal{B}(u(t))$ for a.e. $t\in T$, \eqref{beta2} holds at a.e. $t\in T$ and $i\in V$ such that $u_i(t) \in(0,1)$. Let $\tilde T\subseteq T$ denote the times when $u$ is differentiable with classical derivative equal to its weak derivative.  
Since $u_i(t)\in[0,1]$ at all times, when $t\in \tilde T$ and $u_i(t) \in\{ 0,1\}$ we have $du_i/dt = 0$. 
Consider first $u_i(t) = 0$. Then for a.e. such $t\in \tilde T$ \[0= \frac{du_i}{dt}= -(\Delta u(t))_i + \frac{1}{\varepsilon}\left( \beta_i(t)-\frac{1}{2}\right ) \] 
and therefore
\[\beta_i(t) = \frac{1}{2} +\varepsilon(\Delta u(t))_i\]
and likewise for $u_i(t)=1$. Thus \eqref{beta2} holds at a.e. $t\in \tilde T$. By Lemma \ref{H1AClem}, $T\setminus\tilde T$ has measure zero, so  \eqref{beta2} holds at a.e. $t\in T$. 
\end{proof}
\begin{nb}
From \eqref{beta2} and the sign properties of $(\Delta u(t))_i$ at $u_i(t)\in\{0,1\}$, it follows that $\beta(t)\in\V_{[-1/2,1/2]}$. This corresponds to the explicit restriction on the subdifferential imposed in the definition of $\beta$ in \emph{\cite{BE1993}}.  
\end{nb}
%Following \cite{BE1993}, we therefore note that our solutions to \eqref{ACobs1} have the subdifferential term $\beta(t)$ almost always lying in the set [[REWRITE]]
%\begin{equation}
%\label{beta}
%	 \mathcal{B}(u(t)) :=\left\{  \alpha\in\V\: \middle|\: \forall i\in V: \alpha_i\in -\partial I_{[0,1]}(u_i(t)), \:|\alpha_i|\leq \frac{1}{2}\text{ when }u_i(t)\in\{0,1\}
%	\right\}.
%\end{equation}
%Unpacking the definition, this means $\mathcal{B}(u(t))$ is nonempty only for $u(t)\in\V_{[0,1]}$ and 
%\[ \beta_i(t)\in\begin{cases}
%		%\{\infty\}, & u_i<0,\\
%		[0,1/2], & u_i(t)=0,\\
%		\{0\}, &0<u_i(t) < 1,\\
%		[-1/2,0], & u_i(t)=1.
%		%\{-\infty\}, &u_i>1.
%	\end{cases}\]
%Tying this all together, we define AC solutions as follows:
In light of the above results, we rigorously define the double-obstacle AC flow as follows.
\begin{mydef}[Double-obstacle AC flow]\label{ACdef}
Let $T$ be an interval. Then the pair $(u,\beta)\in\V_{[0,1],t\in T}\times\V_{t\in T}$ is a solution to double-obstacle AC flow on $T$ when $u\in H_{loc}^1(T;\V)\cap C^0(T;\V)$ and for almost every $t\in T$, 
\begin{align}
\label{ACobs2}
	&\varepsilon \frac{du}{dt} + \varepsilon\Delta u(t) +\frac{1}{2}\mathbf{1}-u(t)= \beta(t), &\beta(t)\in\mathcal{B}(u(t)).
\end{align}
Note that we will often for conciseness refer to just $u$ as a solution to \eqref{ACobs2}, since $\beta$ is a.e. uniquely determined as a function of $u$ by \eqref{beta2}.
\end{mydef}
\begin{nb}
The condition that $u\in C^0$ is just to pick out the \emph{continuous representative} of $u\in H^1_{loc}$. Recall since $T$ is one-dimensional we have by Sobolev embedding that any $u\in H^1(T;\V)$ has a representative $\tilde u \in C^{0,1/2}(T;\V)$ such that $u(t) = \tilde u(t)$ for a.e. $t\in T$.
\end{nb}
\subsection{Alternative forms of the double-obstacle AC flow}
We here give an explicit integral form and, following \cite{BE1993}, a weak form of the AC flow.
\begin{thm}\label{ACexplicit}
The pair $(u,\beta)\in\V_{[0,1],t\in T}\times\V_{t\in T}$ is a solution to \eqref{ACobs2} if and only if $\beta$ is locally integrable and locally bounded a.e. \emph{(}i.e. integrable and bounded a.e. on every bounded subset of $T$\emph{)}, $\beta(t)\in\mathcal{B}(u(t))$ for a.e. $t\in T$, and for all $t\in T$
\begin{equation}
	\label{ACobssoln2}
	u(t) = \frac{1}{2}\mathbf{1} +  e^{t/\varepsilon}e^{-t\Delta}\left(u(0)-\frac{1}{2}\mathbf{1}\right) + \frac{1}{\varepsilon}\int_0^t e^{(t-s)/\varepsilon}e^{-(t-s)\Delta}\beta(s) \; ds.
\end{equation}
\end{thm}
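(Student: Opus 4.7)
The plan is to recognise \eqref{ACobs2} as a linear inhomogeneous ODE in the shifted variable $v := u - \tfrac{1}{2}\mathbf{1}$. With $A := \tfrac{1}{\varepsilon}I - \Delta$, a bounded linear operator on the finite-dimensional space $\V$, the differential equation becomes $\tfrac{dv}{dt} = Av + \tfrac{1}{\varepsilon}\beta$. Since $I$ and $\Delta$ commute, $e^{tA} = e^{t/\varepsilon}e^{-t\Delta}$, and \eqref{ACobssoln2} is exactly the Duhamel/variation-of-parameters formula
\[ v(t) = e^{tA}v(0) + \frac{1}{\varepsilon}\int_0^t e^{(t-s)A}\beta(s)\,ds. \]
The task is to justify passage between the two forms in the a.e./weak-derivative setting.

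For the forward direction, suppose $(u,\beta)$ solves \eqref{ACobs2}. The assumption $u \in H^1_{loc}(T;\V) \cap C^0(T;\V)$ combined with Lemma \ref{H1AClem} makes $v$ locally absolutely continuous. Theorem \ref{betathm} together with the ensuing note gives $\beta \in \V_{[-1/2,1/2]}$ a.e., and since $\Delta u$ is continuous the explicit formula \eqref{beta2} makes $\beta$ locally bounded a.e., hence locally integrable. I then apply the integrating factor $e^{-tA}$: the product rule for absolutely continuous functions yields $\tfrac{d}{dt}\bigl(e^{-tA}v(t)\bigr) = \tfrac{1}{\varepsilon}e^{-tA}\beta(t)$ a.e., which I integrate from $0$ to $t$ and premultiply by $e^{tA}$ to obtain \eqref{ACobssoln2}.

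For the reverse direction, suppose the integral formula holds with $\beta$ locally integrable, locally bounded a.e., and $\beta(t) \in \mathcal{B}(u(t))$ a.e. Rewrite the RHS of \eqref{ACobssoln2} minus $\tfrac{1}{2}\mathbf{1}$ as $e^{tA}\bigl(v(0) + \tfrac{1}{\varepsilon}\int_0^t e^{-sA}\beta(s)\,ds\bigr)$. The inner integral is locally absolutely continuous in $t$ with a.e.\ derivative $\tfrac{1}{\varepsilon}e^{-tA}\beta(t)$ by the Lebesgue differentiation theorem; composing with the smooth operator-valued curve $t \mapsto e^{tA}$ preserves local absolute continuity and, by the product rule, gives $\tfrac{dv}{dt} = Av + \tfrac{1}{\varepsilon}\beta$ a.e., which is \eqref{ACobs2}. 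Local boundedness a.e.\ of $\beta$ makes $dv/dt$ locally essentially bounded and hence in $L^2_{loc}$, so $u \in H^1_{loc}(T;\V)$; continuity of $u$ is immediate from the integral representation, and combined with $u(t) \in \V_{[0,1]}$ a.e.\ (from $\mathcal{B}(u(t))$ being non-empty only there) and continuity gives $u \in \V_{[0,1],t\in T}$ in the strong sense of Definition \ref{ACdef}.

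The main obstacle is the a.e.\ nature of the derivatives: both the integrating-factor manipulation in the forward direction and the differentiation-under-the-integral in the reverse direction rely on the fundamental theorem of calculus for absolutely continuous functions rather than on classical smoothness. Proposition \ref{graphH1} lets me reduce these statements to the scalar $H^1$ theory componentwise, after which the finite-dimensionality of $\V$ removes any issue with interchanging $e^{tA}$ with integration.
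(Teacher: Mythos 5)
Your proposal is correct and takes essentially the same route as the paper: both directions are the variation-of-parameters argument with the integrating factor $e^{-t/\varepsilon}e^{t\Delta}=e^{-tA}$, with local boundedness of $\beta$ obtained from Theorem \ref{betathm} and the a.e.\ differentiation steps justified by the $H^1$/absolute-continuity version of the fundamental theorem of calculus. The paper phrases the regularity bookkeeping slightly differently (citing the FTC on $H^1$ from Brezis and reading off the weak derivative of the integral formula directly), but the substance is identical.
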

\begin{proof}
Let $(u,\beta)$ solve \eqref{ACobs2}. Recall from Theorem \ref{betathm} that $\beta\in\V_{[-1/2,1/2],t\in T}$ a.e., so is locally (indeed, globally) bounded a.e. Next, note from \eqref{ACobs2} that $\beta$ is a sum of a continuous function and the derivative of an $H^1_{loc}$ function, so is locally integrable. Finally, note that we can rewrite the ODE in \eqref{ACobs2} as \[\frac{d}{dt}\left(e^{-t/\varepsilon}e^{t\Delta}\left(u-\frac{1}{2}\mathbf{1}\right)\right)=\varepsilon^{-1}e^{-t/\varepsilon}e^{t\Delta}\beta.\] Then $u(t)$ obeys  \eqref{ACobssoln2} by the `fundamental theorem of calculus' on $H^1$ \cite[Theorem 8.2]{Brezis} since $e^{-s/\varepsilon}e^{s\Delta}(u-\frac{1}{2}\mathbf{1})\in H^1((0,t);\V)$.

Now let $\beta$ be locally integrable and locally bounded a.e., and let $u(t)$ obey \eqref{ACobssoln2} and $\beta(t)\in\mathcal{B}(u(t))$ for a.e. $t\in T$. Then $u$ has weak derivative  
\[
\frac{du}{dt} = \left(\frac{1}{\varepsilon}I -\Delta\right)e^{t/\varepsilon}e^{-t\Delta}\left(u(0)-\frac{1}{2}\mathbf{1}\right) + \frac{1}{\varepsilon}\beta(t) + \left(\frac{1}{\varepsilon}I -\Delta\right) \frac{1}{\varepsilon}\int_0^t e^{(t-s)/\varepsilon}e^{-(t-s)\Delta}\beta(s) \; ds
\]
which obeys \eqref{ACobs2}. Next, since $\beta$ is locally bounded a.e. it follows from \eqref{ACobssoln2} that $u$ is continuous. Finally $u$ is bounded, so is locally $L^2$, and by above $du/dt$ is a sum of (respectively) a smooth function, a locally bounded a.e. function and the integral of a locally bounded a.e. function, so is locally bounded a.e. and hence is locally $L^2$. Hence $u\in H^1_{loc}(T;\V)$. 
\end{proof}
\begin{nb} This form will be important later, when considering the convergence of the semi-discrete scheme. Note that \eqref{ACobssoln2} implies $\beta(s)\neq \mathbf{0}$ a positive measure subset of the time, so $u$ must remain at obstacles non-instantaneously.
\end{nb}
\begin{thm}\label{ACobsweak}
	A continuous function $u\in\V_{[0,1],t\in T}\cap H^1_{loc}(T;\V)$ \emph{(}and associated $\beta(t)= \varepsilon \frac{du}{dt} + \varepsilon\Delta u(t)-u(t)+\frac{1}{2}\mathbf{1}$ a.e.\emph{)} is a solution to \eqref{ACobs2} if and only if for almost every $t\in T$
		\begin{equation}
		\label{ACobs2a}
		\forall\eta\in\V_{[0,1]},\: \left\langle\varepsilon\frac{du}{dt}-u(t)+\frac{1}{2}\mathbf{1},\eta-u(t)\right\rangle_\V+\varepsilon\left\langle\nabla u(t),\nabla\eta-\nabla u(t)\right\rangle_\mathcal{E}\ge0.
	\end{equation}
\end{thm}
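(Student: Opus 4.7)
The plan is to show that the subdifferential condition $\beta(t)\in\mathcal{B}(u(t))$ is equivalent to a pointwise-tested variational inequality, and then to substitute the explicit formula for $\beta$ coming from \eqref{ACobs2}, converting the Laplacian contribution via graph Green's identity.

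First I would prove the following elementary characterisation: for $u\in\V$ and $\beta\in\V$, $\beta\in\mathcal{B}(u)$ if and only if $u\in\V_{[0,1]}$ and $\langle\beta,\eta-u\rangle_\V\ge 0$ for every $\eta\in\V_{[0,1]}$. The forward direction uses the defining inequality of the subdifferential: $-\beta_i\in\partial I_{[0,1]}(u_i)$ means $I_{[0,1]}(y)-I_{[0,1]}(u_i)\ge -\beta_i(y-u_i)$ for all $y\in\mathbb{R}$, which, restricted to $y\in[0,1]$, becomes $\beta_i(y-u_i)\ge 0$; summing over $i$ against the weights $d_i^r$ yields the $\V$-inner-product inequality. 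The converse is a test-function argument: given the inner-product inequality, choose $\eta$ to agree with $u$ except at a single vertex $i$ (with $u\in\V_{[0,1]}$ verified by feasibility), which recovers the pointwise subdifferential condition since $d_i^r>0$.

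Next I would invoke the graph analogue of integration by parts, which follows directly from the definitions of $\nabla$, $\Delta$, and $\langle\cdot,\cdot\rangle_\V$, $\langle\cdot,\cdot\rangle_\mathcal{E}$: for all $u,v\in\V$, $\langle\Delta u,v\rangle_\V=\langle\nabla u,\nabla v\rangle_\mathcal{E}$. A quick symmetrisation of $\sum_{i,j}\omega_{ij}(u_i-u_j)v_i$ over the swap $i\leftrightarrow j$ (using $\omega_{ij}=\omega_{ji}$) reduces it to $\tfrac{1}{2}\sum_{i,j}\omega_{ij}(u_i-u_j)(v_i-v_j)$, which is exactly the $\mathcal{E}$ inner product of the gradients.

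With these two ingredients in hand, I would conclude the theorem by direct substitution. The assumption on $u$ and the explicit formula $\beta(t)=\varepsilon\tfrac{du}{dt}+\varepsilon\Delta u(t)-u(t)+\tfrac{1}{2}\mathbf{1}$ show that $(u,\beta)$ solves \eqref{ACobs2} iff $\beta(t)\in\mathcal{B}(u(t))$ for a.e.\ $t\in T$. By the first step this is equivalent to $\langle\beta(t),\eta-u(t)\rangle_\V\ge 0$ for all $\eta\in\V_{[0,1]}$ at a.e.\ $t$, and substituting the formula for $\beta$ and applying the graph Green's identity to the $\varepsilon\langle\Delta u(t),\eta-u(t)\rangle_\V$ term produces precisely \eqref{ACobs2a}. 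The only real subtlety is a measurability book-keeping point: one must note that $u\in\V_{[0,1],t\in T}$ holds throughout (so $\mathcal{B}(u(t))$ is always non-empty) and that $du/dt$ exists a.e.\ by Lemma \ref{H1AClem}, so that the chain of equivalences can indeed be stated ``for a.e.\ $t\in T$''; the test-function ranges over the constant set $\V_{[0,1]}$ so no time-regularity of $\eta$ is required. I do not anticipate any genuine obstacle.
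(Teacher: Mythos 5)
Your proposal is correct and follows essentially the same route as the paper: the paper likewise reduces \eqref{ACobs2a} to $\ip{\beta(t),\eta-u(t)}\ge 0$ via the identity $\ip{\Delta u,v}=\langle\nabla u,\nabla v\rangle_\mathcal{E}$, verifies this inequality by the sign of $\beta_i$ at vertices where $u_i\in\{0,1\}$, and recovers the subdifferential condition in the converse direction by testing with $\eta$ agreeing with $u$ except at a single vertex. The only cosmetic difference is that you package the equivalence $\beta\in\mathcal{B}(u)\iff\ip{\beta,\eta-u}\ge 0\ \forall\eta\in\V_{[0,1]}$ as a standalone lemma rather than carrying it out inline.
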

\begin{proof}
	Let $u$ solve \eqref{ACobs2} so for a.e. $t\in T$, $\beta(t)\in\mathcal{B}(u(t))$. Then at a.e. $t\in T$, $\beta_i(t) \geq 0$ at the $i\in V$ where $u_i(t) =0$ and $\beta_i(t)\leq 0$ at the $i\in V$ where $u_i(t) =1$. So, at each such $t\in T$, %for any $\eta\in\V_{[0,1]}$ 
\begin{equation*}
\forall \eta\in\V_{[0,1]}  \left\{ %\qquad
\begin{aligned}
	 LHS \: \eqref{ACobs2a} &= \left\langle-\varepsilon\Delta u(t)+\beta(t),\eta-u(t)\right\rangle_\V+\varepsilon\left\langle\nabla u(t),\nabla\eta-\nabla u(t)\right\rangle_\mathcal{E}\\
	 &=\ip{\beta(t),\eta-u(t)} \\
	 &= \sum_{\{i|u_i(t)=0\}} d_i^r\beta_i(t)\eta_i + \sum_{\{i|u_i(t)=1\}} d_i^r\beta_i(t)(\eta_i-1)\geq 0.
\end{aligned}\right.
\end{equation*}
	Now let $u\in\V_{[0,1],t\in T}\cap H^1_{loc}(T;\V)$ satisfy \eqref{ACobs2a} for almost every $t\in T$.	 So for all such $t$ \[ \forall\eta\in\V_{[0,1]},\:\ip{\beta(t),\eta-u(t)}\ge 0. \] 
Let $\eta_j =u_j(t)$ for $j\neq i$ and $\eta_i=0$, and  $\eta'_j =u_j(t)$ for $j\neq i$ and $\eta'_i=1$. Substituting $\eta$ and $\eta'$ into the above we have $\beta_i(t) u_i(t)\leq 0$ and $\beta_i(t)(1-u_i(t))\geq 0$. Therefore
	\[\beta_i(t)\begin{cases}
		=0, & u_i(t)\in (0,1)\\
		\leq 0, &u_i(t)=1\\
		\geq 0, &u_i(t)=0
	\end{cases}\]
so $\beta(t)\in\mathcal{B}(u(t))$, and thus $(u,\beta)$ solves \eqref{ACobs2}.
\end{proof}
\subsection{Well-posedness of the double-obstacle AC flow}
%We note the following important theorems.
\begin{thm}\label{existence}
Let $T=[0,\infty)$. Then for all  $u_0\in \V_{[0,1]}$ there exists $(u,\beta)\in\V_{[0,1],t\in T}\times\V_{t\in T}$ satisfying \eqref{ACobs2} with $u\in H_{loc}^1(T;\V)\cap C^{0,1}(T;\V)$ and with initial condition $ u(0) = u_0$. 
\end{thm}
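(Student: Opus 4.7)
The plan is to use a minimising-movements (implicit Euler) construction that directly mimics the scheme the paper later studies. Fix $\tau>0$, set $u_0^\tau := u_0$, and inductively choose
\[ u_{n+1}^\tau \in \argmin_{u\in\V_{[0,1]}} \left\{ \frac{\|u-u_n^\tau\|_\V^2}{2\tau} + \GL(u) \right\}. \]
Existence of a minimiser is immediate from continuity of the objective on the compact set $\V_{[0,1]}$. The first-order optimality condition, unpacked using the definition of $\mathcal{B}$ and the identification $\nabla_\V\GL(u) = \Delta u + \frac{1}{\varepsilon}W'(u)$, produces a fully-implicit discrete analogue of \eqref{ACobs2}: there exists $\beta_{n+1}^\tau \in \mathcal{B}(u_{n+1}^\tau)$ with
\[ \varepsilon\, \frac{u_{n+1}^\tau - u_n^\tau}{\tau} + \varepsilon\Delta u_{n+1}^\tau + \tfrac12\mathbf{1} - u_{n+1}^\tau = \beta_{n+1}^\tau. \]

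The standard descent inequality $\GL(u_{n+1}^\tau) + \|u_{n+1}^\tau - u_n^\tau\|_\V^2/(2\tau) \leq \GL(u_n^\tau)$ telescopes to uniformly bound $\|d\hat u^\tau/dt\|_{L^2_{loc}}$, where $\hat u^\tau$ denotes the piecewise-linear interpolant. Replaying the argument of Theorem \ref{betathm} on the discrete equation---splitting into cases according to whether $u_{n+1,i}^\tau$ lies in $\{0\}$, $(0,1)$, or $\{1\}$---simultaneously gives $\|\beta_{n+1}^\tau\|_\infty \leq C$ uniformly in $n$ and $\tau$, with $C$ depending only on $\|\Delta\|$ and $\varepsilon$.

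By Arzelà--Ascoli a subsequence $\tau_k\to 0$ yields $\hat u^{\tau_k}\to u$ uniformly on compact time intervals, with $u\in C^0([0,\infty);\V_{[0,1]})\cap H^1_{loc}$ and $u(0)=u_0$; simultaneously the piecewise-constant interpolants $\bar\beta^{\tau_k}$ converge weakly-$\ast$ in $L^\infty_{loc}$ to some $\beta$, and passing to the limit in the discrete equation recovers \eqref{ACobs2} almost everywhere. The main obstacle is verifying the pointwise inclusion $\beta(t)\in\mathcal{B}(u(t))$ for a.e.\ $t$, since only weak convergence of $\bar\beta^{\tau_k}$ is available. I would handle this by a Minty-type argument: $\mathcal{B}$ is the graph of a maximal monotone operator on $\V$ (namely minus the $\V$-subdifferential of $\langle I_{[0,1]}\circ\cdot,\mathbf{1}\rangle_\V$), and such graphs are closed under weak-strong convergence once one verifies the limsup estimate $\limsup_k (\bar\beta^{\tau_k},\bar u^{\tau_k})_{t\in T} \leq (\beta,u)_{t\in T}$; this in turn follows from the discrete energy inequality combined with strong $L^2_{loc}$ convergence of $\hat u^{\tau_k}$.

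Finally, the improved Lipschitz regularity $u\in C^{0,1}$ comes essentially for free once the solution is constructed: Theorem \ref{betathm} pins $\beta(t)\in\V_{[-1/2,1/2]}$ a.e., whence $\varepsilon\, du/dt = -\varepsilon\Delta u + u - \tfrac12\mathbf{1} + \beta$ is essentially bounded in $\V$, upgrading the Sobolev $C^{0,1/2}$ representative to $C^{0,1}$.
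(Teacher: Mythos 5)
Your proof is essentially correct, but it takes a genuinely different route from the paper. The paper's proof (Appendix \ref{existsec}) regularises the \emph{potential}: it replaces $W$ by $C^1$ approximations $W_\nu$, solves the resulting classical ODEs by Picard--Lindel\"of, establishes the a priori bound $u_\nu\in\V_{[-\nu,1+\nu]}$ by a maximum-principle argument, and passes $\nu\downarrow 0$ via Arzel\`a--Ascoli; the limit inclusion $\beta(t)\in\mathcal{B}(u(t))$ is then identified not by a Minty argument but by Banach--Saks (strong convergence of Ces\`aro means of the $\beta_{\nu_n}$, hence a.e.\ pointwise convergence along a subsequence, followed by a direct case check on $u_i(t)$). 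You instead discretise \emph{time} by a fully implicit minimising-movements scheme --- note this is not the paper's semi-discrete scheme \eqref{SDobs}, which keeps the diffusion exact via $e^{-\tau\Delta}u_n$; the paper does record that construction as a second existence proof (Theorem \ref{SDlimit}), and there too the inclusion is recovered via Banach--Saks rather than monotone-operator theory. Your approach buys the energy-dissipation inequality for free and generalises readily to other convex obstacles; the paper's penalisation buys classical ODE solutions at each stage and the explicit formula for $\beta_\nu$ that makes the pointwise case check elementary. Two small repairs to your write-up: first, the uniform bound on $\beta^\tau_{n+1}$ does \emph{not} follow by ``replaying Theorem \ref{betathm}'', whose argument rests on $du_i/dt=0$ a.e.\ on $\{u_i\in\{0,1\}\}$ and has no discrete analogue; instead, combine the sign constraint from the variational inequality with the explicit formula, e.g.\ at $u^\tau_{n+1,i}=0$ one has $0\le\beta^\tau_{n+1,i}=-\tfrac{\varepsilon}{\tau}u^\tau_{n,i}+\varepsilon(\Delta u^\tau_{n+1})_i+\tfrac12\le\tfrac12$ since both of the first two terms are nonpositive there, giving $\beta^\tau_{n+1}\in\V_{[-1/2,1/2]}$ exactly as in the continuum. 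Second, your Minty step is simpler than you make it: since $\bar u^{\tau_k}\to u$ strongly in $L^2_{loc}$ and $\bar\beta^{\tau_k}\rightharpoonup\beta$, the pairing $(\bar\beta^{\tau_k},\bar u^{\tau_k})_{t\in T}\to(\beta,u)_{t\in T}$ automatically, so the limsup estimate needs no appeal to the discrete energy inequality; one can simply pass to the limit in $\ip{\bar\beta^{\tau_k}(t),\eta-\bar u^{\tau_k}(t)}\ge0$ and invoke Theorem \ref{ACobsweak}.
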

\begin{proof}[Proof \emph{(}see Appendix \ref{existsec}\emph{)}] We construct a solution by approximating double-obstacle AC solutions by solutions to AC flows with a $C^1$ approximation of the double-obstacle potential, and taking the limit as the approximations become more accurate. We also prove this as Theorem~\ref{SDlimit}, by taking a limit of the semi-discrete approximations as defined in \eqref{SDobs}.
\end{proof}
\begin{thm}\label{uniquenessprequel} Let $T = [0,T_0]$ or $[0,\infty)$, and let $(u,\beta),(v,\gamma)$ be solutions to \eqref{ACobs2} on T with $u(0)=v(0)$. Then for all $t\in T$, $u(t) = v(t)$, and for a.e. $t\in T$, $\beta(t) = \gamma(t)$.   \end{thm}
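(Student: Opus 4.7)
The plan is a standard energy/monotonicity argument exploiting that $-\partial I_{[0,1]}$ is a monotone set-valued operator (being the negative of the subdifferential of a convex function), combined with a Gr\"onwall inequality.

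Let $(u,\beta)$ and $(v,\gamma)$ be two solutions on $T$ with $u(0)=v(0)$, and set $w:=u-v\in H^1_{loc}(T;\V)\cap C^0(T;\V)$. Subtracting the two equations \eqref{ACobs2} gives, for a.e.\ $t\in T$,
\[
\varepsilon\frac{dw}{dt}+\varepsilon\Delta w(t)-w(t)=\beta(t)-\gamma(t).
\]
I would then take the $\ip{\cdot,\cdot}$ inner product of both sides with $w(t)$. Using the chain rule for $H^1_{loc}$ functions (which for $\V$-valued functions reduces, by Proposition~\ref{graphH1} and the component-wise form of $\ip{\cdot,\cdot}$, to the familiar identity $\tfrac{d}{dt}\|w_i\|^2_{L^2} = 2(w_i,w_i')_{L^2}$ applied coordinatewise), this yields for a.e.\ $t\in T$
\[
\frac{\varepsilon}{2}\frac{d}{dt}\|w(t)\|_\V^2
=-\varepsilon\ip{\Delta w(t),w(t)}+\|w(t)\|_\V^2+\ip{\beta(t)-\gamma(t),w(t)}.
\]

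Next I would dispose of the last two ``bad'' terms. The diffusion term is non-positive: $\ip{\Delta w,w}\geq 0$ since $\Delta$ is positive semi-definite. For the subdifferential term, I would invoke monotonicity: since $-\beta_i(t)\in\partial I_{[0,1]}(u_i(t))$ and $-\gamma_i(t)\in\partial I_{[0,1]}(v_i(t))$ for a.e.\ $t$ and every $i\in V$, convexity of $I_{[0,1]}$ gives
\[
\bigl(-\beta_i(t)-(-\gamma_i(t))\bigr)\bigl(u_i(t)-v_i(t)\bigr)\geq 0,
\]
hence $\ip{\beta(t)-\gamma(t),w(t)}=\sum_{i\in V} d_i^r(\beta_i(t)-\gamma_i(t))(u_i(t)-v_i(t))\leq 0$ because $d_i^r>0$. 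Combining these two inequalities gives
\[
\frac{d}{dt}\|w(t)\|_\V^2\leq\frac{2}{\varepsilon}\|w(t)\|_\V^2\qquad\text{for a.e.\ }t\in T.
\]
Since $\|w\|_\V^2$ is absolutely continuous on compact subintervals of $T$ (as $w\in H^1_{loc}\cap C^0$ via Lemma~\ref{H1AClem}), Gr\"onwall's inequality applied to $\|w(\cdot)\|_\V^2$ with initial value $\|w(0)\|_\V^2=0$ yields $w\equiv 0$ on $T$, i.e.\ $u=v$ pointwise.

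Finally, equality $\beta=\gamma$ almost everywhere follows immediately from Theorem~\ref{betathm}: since $\beta(t)$ is determined a.e.\ as an explicit function of $u(t)$ by \eqref{beta2} (and the same for $\gamma$ in terms of $v$), and we have just shown $u=v$, we get $\beta=\gamma$ for a.e.\ $t\in T$. I expect the main (minor) obstacle to be the careful justification of the chain rule for $t\mapsto\|w(t)\|_\V^2$, which the absolute continuity result in Lemma~\ref{H1AClem} plus the componentwise characterisation of $H^1(T;\V)$ from Proposition~\ref{graphH1} make routine; everything else is the standard monotonicity-plus-Gr\"onwall recipe.
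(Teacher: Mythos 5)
Your proof is correct, but it takes a genuinely different route from the paper. The paper does not argue with $w=u-v$ directly; instead it first establishes a comparison principle (Theorem \ref{cp2}, a graph analogue of \cite[Lemma 2.4]{CE}): testing the difference of the two inequalities against the \emph{positive part} $(v-u)_+$, using the inequality $\langle\nabla z_+,\nabla z_+\rangle_\mathcal{E}\leq\langle\nabla z,\nabla z_+\rangle_\mathcal{E}$ from Proposition \ref{usefullem}, a case-by-case sign check on $\gamma_i-\beta_i$ where $v_i>u_i$, and Gr\"onwall applied to $\|(v-u)_+\|_\V^2$. Uniqueness is then Corollary \ref{ACuniqueness}: apply the comparison principle in both directions and conclude $\beta=\gamma$ a.e.\ from Theorem \ref{betathm}, exactly as you do. Your argument replaces the order-theoretic step by the standard monotonicity of the subdifferential of the convex function $I_{[0,1]}$ applied to the full difference $w$, together with $\ip{\Delta w,w}=\|\nabla w\|_\mathcal{E}^2\geq 0$; all steps check out, and the chain-rule issue you flag is indeed routine given Proposition \ref{graphH1} and Lemma \ref{H1AClem} (componentwise, the product of locally absolutely continuous functions is locally absolutely continuous). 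What each approach buys: the paper's detour yields the comparison principle itself, which is one of its stated contributions and is wanted for the intended link to mean curvature flow, so uniqueness comes essentially for free afterwards; your argument is shorter and more self-contained for uniqueness alone, avoids the positive-part machinery of Proposition \ref{usefullem}, and, run with $u(0)\neq v(0)$, immediately gives $\|u(t)-v(t)\|_\V\leq e^{t/\varepsilon}\|u(0)-v(0)\|_\V$ --- i.e.\ the stability estimate \eqref{wpeq} of Theorem \ref{wellp}, which the paper instead obtains later by the more circuitous route through Theorems \ref{SDLips} and \ref{SDlimit}.
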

\begin{proof}[Proof \emph{(}see Appendix \ref{uniquesec}\emph{)}] 
We prove this as Corollaries~\ref{ACuniqueness} and \ref{ACuniqueness2} by translating into the graph setting a comparision principle derived in Chen and Elliott \cite{CE}.
\end{proof}
\begin{thm}\label{wellp} Let $u,v$ be solutions to \eqref{ACobs2} on $T = [0,T_0]$ or $[0,\infty)$. Then for all $t\in T$
\be\label{wpeq}
|| u(t)- v(t)||_\V \leq e^{t/\varepsilon}||u(0)-v(0)||_\V.
\ee
\end{thm}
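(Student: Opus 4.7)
The plan is to prove \eqref{wpeq} by a standard energy/Gronwall argument applied to $w := u - v$. Subtracting the two instances of the ODE in \eqref{ACobs2} gives
\begin{equation*}
\varepsilon \frac{dw}{dt} + \varepsilon \Delta w - w = \beta - \gamma \quad \text{a.e.\ on } T,
\end{equation*}
where $\beta(t)\in\mathcal{B}(u(t))$ and $\gamma(t)\in\mathcal{B}(v(t))$. Taking the $\V$-inner product with $w$ and using that $\Delta$ is positive semi-definite (so $\langle \Delta w, w\rangle_\V \geq 0$) yields
\begin{equation*}
\varepsilon \left\langle \frac{dw}{dt}, w\right\rangle_\V \leq \|w\|_\V^2 + \langle \beta - \gamma, w\rangle_\V \qquad \text{a.e.}
\end{equation*}

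The main step is to show that $\langle \beta - \gamma, u - v\rangle_\V \leq 0$. This follows from the monotonicity of the subdifferential of the convex function $I_{[0,1]}$: if $-\beta_i \in \partial I_{[0,1]}(u_i)$ and $-\gamma_i \in \partial I_{[0,1]}(v_i)$, then $(-\beta_i + \gamma_i)(u_i - v_i) \geq 0$, which can be verified by the three non-trivial case checks ($u_i=0$ vs.\ $v_i\in(0,1]$, $u_i=1$ vs.\ $v_i\in[0,1)$, and both in $(0,1)$). Summing against the positive weights $d_i^r$ gives the desired inequality. Substituting, one obtains
\begin{equation*}
\varepsilon \left\langle \frac{dw}{dt}, w\right\rangle_\V \leq \|w\|_\V^2 \qquad \text{a.e.\ on } T.
\end{equation*}

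To turn the left-hand side into $\tfrac{\varepsilon}{2}\tfrac{d}{dt}\|w\|_\V^2$, I invoke Lemma \ref{H1AClem}: since $u,v\in H^1_{loc}(T;\V)\cap C^0(T;\V)$, both are locally absolutely continuous on $T$, hence so is $w$, and therefore $\|w(\cdot)\|_\V^2$ is locally absolutely continuous with classical derivative $2\langle dw/dt, w\rangle_\V$ at a.e.\ $t$. This gives the differential inequality
\begin{equation*}
\frac{d}{dt}\|w(t)\|_\V^2 \leq \frac{2}{\varepsilon}\|w(t)\|_\V^2 \qquad \text{a.e.\ on } T.
\end{equation*}
An application of Gronwall's inequality (in its integral form, valid for absolutely continuous functions) yields $\|w(t)\|_\V^2 \leq e^{2t/\varepsilon}\|w(0)\|_\V^2$, and taking square roots gives \eqref{wpeq}.

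The only subtle point in this proof is the monotonicity estimate on $\beta - \gamma$, but given the explicit characterisation of $\mathcal{B}$ via \eqref{beta} and the positivity of the weights $d_i^r$, this is essentially immediate. Note also that the estimate \eqref{wpeq} subsumes the uniqueness conclusion of Theorem~\ref{uniquenessprequel} (the $u=v$ part), so in principle one could use this argument to also prove uniqueness without appealing to the comparison principle of Appendix~\ref{uniquesec}; the comparison principle, however, produces finer information about ordering of solutions that \eqref{wpeq} alone does not.
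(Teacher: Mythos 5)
Your proof is correct, but it takes a genuinely different route from the paper. The paper obtains \eqref{wpeq} by passing to the limit in the semi-discrete scheme: Theorem \ref{SDLips} shows that one step of the scheme is Lipschitz with constant $(1-\lambda)^{-1}$ (diffusion is nonexpansive and the relaxed thresholding map $\rho_\lambda$ has slope at most $(1-\lambda)^{-1}$), so $n$ steps give $(1-\lambda)^{-n}$, and Theorem \ref{SDlimit} then lets one send $\tau_n\downarrow 0$ with $m=\ceil{t/\tau_n}$ so that $(1-\tau_n/\varepsilon)^{-m}\to e^{t/\varepsilon}$. Your argument is instead a direct continuum energy estimate: subtract the two differential inclusions, test with $w=u-v$, discard the nonnegative term $\langle\Delta w,w\rangle_\V$, use monotonicity of $\partial I_{[0,1]}$ (hence of $\mathcal{B}$, with the sign reversal) to kill $\langle\beta-\gamma,w\rangle_\V$, and close with Gr\"onwall. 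The case checks and the chain-rule step for $\|w\|_\V^2$ are all legitimate given Lemma \ref{H1AClem} (local absolute continuity), and the technique is exactly the one the paper itself deploys in Theorem \ref{cp2} with $w_+$ in place of $w$. What each approach buys: yours is self-contained, avoids the entire semi-discrete apparatus, and, as you note, yields uniqueness (Theorem \ref{uniquenessprequel}) as a byproduct, whereas the paper deliberately routes uniqueness through the comparison principle of Appendix \ref{uniquesec} and reserves \eqref{wpeq} as an illustration of how quantitative properties of the semi-discrete iterates transfer to the Allen--Cahn flow in the limit. The paper's route also makes the constant $e^{t/\varepsilon}$ appear as the natural limit of the discrete Lipschitz constants, which is conceptually aligned with its main theme; your route gets the same constant from the $+\frac{1}{\varepsilon}(u-\frac12\mathbf{1})$ reaction term alone.
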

\begin{proof}
We prove this in section \ref{consequences} as a consequence of Theorem \ref{SDLips} and Theorem \ref{SDlimit}. Avoiding circularity, this result is not used in proving any results in this paper.
\end{proof}
\subsection{Gradient flow property and regularity of solutions}
%We now investigate important properties of this equation. Firstly, we note the following existence and uniqueness theory, which we shall prove over the course of this paper.
%\begin{nb}
%The conditions in \eqref{beta} naturally drop out of the construction of the solution as in Appendix A as a limit of solutions to AC with $C^1$ approximation of the double-obstacle potential. Likewise for the consruction as in Theorem \ref{SDlimit}.
%\end{nb}
To meaningfully call \eqref{ACobs2} an AC flow, we must verify that it monotonically decreases the Ginzburg--Landau functional.
\begin{prop} \label{ACgf} Let $(u,\beta)$ solve \eqref{ACobs2} on an interval $T$. Then for a.e. $t\in T$, 
\[ \frac{d\GL(u(t))}{dt}\leq 0. \] 
\end{prop}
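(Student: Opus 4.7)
The plan is to differentiate $\GL(u(t))$ directly along the flow, substitute the AC equation, and exploit a complementarity between $\beta$ and $du/dt$ that forces the ``reaction'' term to vanish. Writing $W = W_s + I_{[0,1]}$ with smooth part $W_s(x) = \tfrac{1}{2}x(1-x)$, and noting that $u(t)\in\V_{[0,1]}$ throughout the flow so the indicator contribution is $0$, we have $\GL(u(t)) = \tfrac{1}{2}\|\nabla u(t)\|_\mathcal{E}^2 + \varepsilon^{-1}\langle W_s\circ u(t),\mathbf{1}\rangle_\V$, which is a smooth function of $u\in\V$. By Lemma \ref{H1AClem}, $u$ is classically differentiable with weak derivative at a.e.\ $t\in T$. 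At such $t$, the chain rule, together with the standard summation-by-parts identity $\langle \nabla v,\nabla w\rangle_\mathcal{E} = \langle \Delta v,w\rangle_\V$, gives
\[
\frac{d\GL(u(t))}{dt} = \left\langle \Delta u(t) + \tfrac{1}{\varepsilon} W_s'(u(t)),\, \tfrac{du}{dt}\right\rangle_\V.
\]

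Next I would substitute using \eqref{ACobs2}: since $W_s'(x) = \tfrac{1}{2}-x$, the flow equation rearranges to $\Delta u + \varepsilon^{-1} W_s'(u) = -\tfrac{du}{dt} + \varepsilon^{-1}\beta$, whence
\[
\frac{d\GL(u(t))}{dt} = -\left\|\tfrac{du}{dt}\right\|_\V^2 + \frac{1}{\varepsilon}\left\langle \beta(t),\,\tfrac{du}{dt}\right\rangle_\V.
\]

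The key step is showing $\langle \beta(t),\tfrac{du}{dt}\rangle_\V = 0$ at a.e.\ $t$. By Theorem \ref{betathm}, for each $i\in V$ we have $\beta_i(t)=0$ whenever $u_i(t)\in(0,1)$, so it suffices to verify that $du_i/dt = 0$ at a.e.\ $t$ with $u_i(t)\in\{0,1\}$. But at any $t$ of classical differentiability with $u_i(t)=0$, the constraint $u_i\geq 0$ forces $t$ to be a local minimum of $u_i$, and hence $du_i/dt=0$ there; the case $u_i(t)=1$ is symmetric. By Lemma \ref{H1AClem}, the set of non-differentiability points has measure zero, so this holds at a.e.\ such $t$. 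Combining, we obtain $d\GL(u(t))/dt = -\|du/dt\|_\V^2 \leq 0$ a.e.\ in $T$.

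The main obstacle is essentially bookkeeping: the desired inequality really encodes the gradient-flow identity $d\GL/dt = -\|du/dt\|_\V^2$, but some care is required because $W$ is non-smooth on $\{0,1\}$. Splitting $W$ into its smooth quadratic part and the indicator, and then leveraging the $\mathcal{B}$-membership of $\beta$ together with the elementary extremum argument to kill the $\beta$-term, sidesteps the need to differentiate the singular part of $W$ at all.
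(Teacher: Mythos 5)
Your proof is correct and takes essentially the same route as the paper's: both observe that on $\V_{[0,1]}$ only the smooth quadratic part of $W$ contributes to $\GL$, differentiate along the flow at the a.e.\ points of classical differentiability, and kill the obstacle contribution via the vertexwise dichotomy that either $\beta_i(t)=0$ (for $u_i(t)\in(0,1)$) or $du_i/dt=0$ (for $u_i(t)\in\{0,1\}$, which is exactly the content of Theorem \ref{betathm}). Your closing identity $\frac{d}{dt}\GL(u(t))=-\left\|\frac{du}{dt}\right\|_\V^2$ is the paper's expression $-\varepsilon^{-2}\sum_{i}d_i^r\left(\varepsilon(\Delta u(t))_i+\frac{1}{2}-u_i(t)\right)^2$ (summed over the vertices with $u_i(t)\in(0,1)$) rewritten using the flow equation, so the two computations agree.
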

\begin{proof}
Define 
\[G_\varepsilon(u):=\frac{1}{2}\left|\left|\nabla u(t)\right|\right|_\mathcal{E}^2 +\frac{1}{2\varepsilon}\left\langle u(t),\mathbf{1}-u(t) \right \rangle_\V\]
then by \eqref{GL} we have, for all $t\in T$,
\[ \GL(u(t)) = G_\varepsilon(u(t)) +  \frac{1}{\varepsilon}\left\langle I_{[0,1]}\circ u(t),\mathbf{1}\right \rangle_\V = G_\varepsilon(u(t))  \] 
since $u(t)\in \V_{[0,1]}$ for all $t\in T$. Hence, since $\nabla_\V G_\varepsilon(u) = \Delta u +\frac{1}{\varepsilon}\left(\frac{1}{2}\mathbf{1} -u\right)$, we note that 
\begin{align*}\varepsilon^2\frac{d\GL(u(t))}{dt} &= \varepsilon^2\frac{dG_\varepsilon(u(t))}{dt} = \left\langle\varepsilon \frac{du}{dt},\varepsilon\Delta u(t) +\frac{1}{2}\mathbf{1} -u(t)\right \rangle_\V\\ &= \left\langle \beta(t)-\varepsilon\Delta u(t) -\frac{1}{2}\mathbf{1} +u(t) ,\varepsilon\Delta u(t) +\frac{1}{2}\mathbf{1} -u(t)\right \rangle_\V.\end{align*}
By Theorem \ref{betathm}, at a.e. $t\in T$ and all $i\in V$, if $u_i(t) \in \{0,1\}$, then $\beta_i(t)-\varepsilon(\Delta u(t))_i -\frac{1}{2}+u_i(t)=0$, and if $u_i(t) \in (0,1)$, then $\beta_i(t) = 0$. Thus for a.e. $t\in T$, let $V'_t:=\{i\in V\mid u_i(t)\in(0,1)\}$, then
\begin{align*}&\left\langle \beta(t)-\varepsilon\Delta u(t) -\frac{1}{2}\mathbf{1} +u(t) ,\varepsilon\Delta u(t) +\frac{1}{2}\mathbf{1} -u(t)\right \rangle_\V%\\
=  -\sum_{i\in V'_t} d_i^r\left(\varepsilon(\Delta u(t))_i +\frac{1}{2}-u_i(t)\right)^2\leq 0
\end{align*}
as desired.
\end{proof}
Finally,  we use \eqref{ACobssoln2} to derive a regularity estimate.
\begin{thm}\label{ACLips}
AC solutions are globally Lipschitz, i.e. $ u \in C^{0,1}([0,\infty);\V)$.
\end{thm}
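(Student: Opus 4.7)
The plan is to obtain Lipschitz continuity directly from a uniform bound on the weak derivative $du/dt$, using the fact that all right-hand side terms in the Allen--Cahn ODE are bounded independently of $t$.

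First I would collect the available pointwise bounds. By Definition~\ref{ACdef}, any solution satisfies $u(t)\in\V_{[0,1]}$ for all $t\in T$, hence $\|u(t)\|_\V\le\|\mathbf{1}\|_\V$ and so $\|\Delta u(t)\|_\V\le\|\Delta\|\cdot\|\mathbf{1}\|_\V$ uniformly in $t$. The note following Theorem~\ref{betathm} shows that $\beta(t)\in\V_{[-1/2,1/2]}$ for a.e.\ $t$, so $\|\beta(t)\|_\V\le\tfrac{1}{2}\|\mathbf{1}\|_\V$. Plugging these bounds into the ODE in \eqref{ACobs2}, rearranged as
\[
\frac{du}{dt} = -\Delta u(t) + \frac{1}{\varepsilon}\left(u(t)-\tfrac{1}{2}\mathbf{1} + \beta(t)\right),
\]
one gets a constant $C=C(\varepsilon,\|\Delta\|,\|\mathbf{1}\|_\V)$ with $\|du/dt\|_\V\le C$ at a.e.\ $t\in[0,\infty)$.

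Next I would upgrade this a.e.\ bound on the weak derivative to a global Lipschitz estimate on the continuous representative. By Lemma~\ref{H1AClem}, $u$ is locally absolutely continuous on $[0,\infty)$, so for every $0\le s<t<\infty$ the fundamental theorem of calculus gives
\[
u(t)-u(s)=\int_s^t \frac{du}{dr}\,dr,
\]
with the integral taken componentwise. Taking $\V$-norms and using Jensen's inequality (or the triangle inequality for Bochner integrals) yields $\|u(t)-u(s)\|_\V\le C(t-s)$, which is precisely $u\in C^{0,1}([0,\infty);\V)$.

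There is no real obstacle here; the only thing to be mindful of is that the Lipschitz estimate rests on the \emph{a priori} bound $\beta\in\V_{[-1/2,1/2]}$ from Theorem~\ref{betathm}, which in turn relies on the sign information produced by the double-obstacle structure. Without the obstacle restriction one could not close the estimate uniformly in $t$, since the linear growth term $\varepsilon^{-1}(u-\tfrac12\mathbf{1})$ would otherwise be uncontrolled; with it, the $[0,1]$-confinement of $u$ and $[-1/2,1/2]$-confinement of $\beta$ make the bound entirely elementary.
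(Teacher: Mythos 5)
Your proof is correct, but it takes a genuinely different and more elementary route than the paper. The paper works from the variation-of-constants representation \eqref{ACobssoln2}: it writes $u(t_2)-u(t_1)$ in terms of $\bigl(e^{(t_2-t_1)A}-I\bigr)\bigl(u(t_1)-\tfrac12\mathbf{1}\bigr)$ plus a Duhamel integral of $\beta$, with $A=\varepsilon^{-1}I-\Delta$, and then estimates the operator norms of $e^{sA}$ and of the difference quotient $(e^{\delta t A}-I)/\delta t$. You instead bound the weak derivative directly from the ODE, $\|du/dt\|_\V\le \|\Delta\|\,\|\mathbf{1}\|_\V+\varepsilon^{-1}\|\mathbf{1}\|_\V$ a.e., using exactly the two a priori confinements the paper also relies on ($u(t)\in\V_{[0,1]}$ and $\beta(t)\in\V_{[-1/2,1/2]}$ from Theorem~\ref{betathm}), and then integrate via the local absolute continuity guaranteed by Lemma~\ref{H1AClem}. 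On a finite graph, where $\Delta$ is a bounded self-adjoint operator, your argument is perfectly rigorous and arguably cleaner; it also yields an explicit Lipschitz constant that is polynomial rather than exponential in $\varepsilon^{-1}$ (at the price of depending on $\|\Delta\|$, which the paper's semigroup estimate avoids — a distinction that would only matter if one wanted an argument surviving the passage to an unbounded Laplacian).
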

\begin{proof}
By \eqref{ACobssoln2} we have for $t_1<t_2$ and $A:= \varepsilon^{-1}I - \Delta$
\begin{equation*}\begin{split}  &u(t_2)- u(t_1) \\
&= \left(e^{t_2 A} -e^{t_1 A}\right)\left(u(0) -\frac{1}{2}\mathbf{1}\right)+\frac{1}{\varepsilon} \int_0^{t_1}  \left( e^{(t_2-s)A} - e^{(t_1-s)A}\right)\beta(s)\; ds+\frac{1}{\varepsilon}\int_{t_1}^{t_2}  e^{(t_2-s)A}\beta(s) \; ds \\
&=\left(e^{(t_2-t_1)A}-I\right)\left[e^{t_1A}\left(u(0) -\frac{1}{2}\mathbf{1}\right)+\frac{1}{\varepsilon} \int_0^{t_1}  e^{(t_1-s)A}\beta(s)\; ds\right] +\frac{1}{\varepsilon}\int_{0}^{t_2-t_1}  e^{sA}\beta(t_2-s)\; ds\\
&=\left(e^{(t_2-t_1)A}-I\right)\left( u(t_1)-\frac{1}{2}\mathbf{1}\right) +\frac{1}{\varepsilon}\int_{0}^{t_2-t_1}  e^{sA}\beta(t_2-s)\; ds.
\end{split} \end{equation*}
Recall that $ u(t) \in \V_{[0,1]}$ for all $t\geq 0$, and by Theorem~\ref{betathm} that $\beta(t)\in\V_{[-1/2,1/2]}$ for a.e. $t\geq0$. Let $B_{\delta t}: = (e^{\delta t A}- I)/\delta t $. Then since $A$ has largest positive eigenvalue $1/\varepsilon$ and $B_{\delta t}$ is self-adjoint, \[||B_{\delta t}|| = \delta t^{-1} \left(e^{\delta t /\varepsilon}- 1\right) \] and note this is monotonically increasing in $\delta t$ for $\delta t>0$. We thus have for $0<t_2 -t_1 < 1$ 
 \begin{equation*}\begin{split}\frac{\left|\left| u(t_2)- u(t_1)\right|\right|_\V}{t_2-t_1}&\leq ||B_{t_2-t_1}||\cdot \frac{1}{2}||\mathbf{1}||_\V +\frac{1}{\varepsilon} \underset{s\in [0,t_2-t_1]}{\text{ess sup}} \left|\left|e^{sA}\beta(t_2-s)  \right|\right|_\V \\
 &\leq  \frac{e^{(t_2-t_1)/\varepsilon}- 1}{ t_2-t_1 }\cdot  \frac{1}{2}||\mathbf{1}||_\V + \frac{1}{\varepsilon}\sup_{s\in [0,t_2-t_1]} \left|\left|e^{sA}\right|\right| \cdot \frac{1}{2}||\mathbf{1}||_\V \\
&\leq \frac{e^{(t_2-t_1)/\varepsilon}- 1}{ t_2-t_1 }\cdot  \frac{1}{2}||\mathbf{1}||_\V + \frac{1}{\varepsilon} e^{(t_2-t_1)/\varepsilon}  \cdot \frac{1}{2}||\mathbf{1}||_\V \\
&< \frac{1}{2}||\mathbf{1}||_\V \left (e^{1/\varepsilon}  -1 + \frac{1}{\varepsilon}e^{1/\varepsilon}\right) 
\end{split}\end{equation*} and for $t_2 -t_1\geq 1$ we have the simpler estimate 
\[\frac{\left|\left| u(t_2)- u(t_1)\right|\right|_\V}{t_2-t_1}\leq \left|\left| u(t_2)- u(t_1)\right|\right|_\V \leq ||\mathbf{1}||_\V \] completing the proof.
\end{proof} 
\begin{nb}
This regularity estimate is relatively optimal, i.e. $u$ is not in general $C^1$. For example, suppose $u(0) = \alpha \mathbf{1}$ for $\alpha \in (0,1/2)$, and take as an ansatz for \eqref{ACobs2}{:}
\begin{align*}
&u(t) = \begin{cases}\frac{1}{2}\mathbf{1} +\left(\alpha-\frac{1}{2}\right)e^{t/\varepsilon}\mathbf{1}, &0\leq t < -\varepsilon\log(1-2\alpha),\\
\mathbf{0}, & t\geq  -\varepsilon\log(1-2\alpha),\end{cases} &\beta(t) = \begin{cases}\mathbf{0}, &0\leq t < -\varepsilon\log(1-2\alpha),\\
\frac{1}{2}\mathbf{1}, & t\geq  -\varepsilon\log(1-2\alpha).\end{cases}
\end{align*}
One can check that this solves \eqref{ACobs2} and is not differentiable at $t = -\varepsilon\log(1-2\alpha)$.
\end{nb}
%Finally, we return to verifying the gradient flow property:
%\begin{thm}\label{betathm}
%AC solutions $(u,\beta)$ satisfy \eqref{GFcondition} for almost every $t$. Indeed, for almost every $t$ such that $u_i(t)\in\{0,1\}$, we have \be\label{beta2} |\beta_i(t)| = \frac{1}{2} -\varepsilon|(\Delta u(t))_i|.\ee
%We recall that this entails that $\GL(u(t))$ is monotonically decreasing in $t$. 
%\end{thm}
%\begin{proof}
%Since $u_i(t)\in[0,1]$ at all times, when $u_i$ is differentiable at $t$ and $u_i(t) \in\{ 0,1\}$ we have $du_i/dt = 0$. 
%Consider first $u_i(t) = 0$. Then we have $(\Delta u(t))_i\leq 0$ and for a.e. $t$ \[0= \frac{du_i}{dt}(t) = -(\Delta u(t))_i + \frac{1}{\varepsilon}\left( \beta_i(t)-\frac{1}{2}\right ) \] 
%therefore since $\beta_i(t)\geq 0$ 
%\[|\beta_i(t)|=\beta_i(t) = \frac{1}{2} -\varepsilon|(\Delta u(t))_i|.\]
%We argue likewise for $u_i(t)=1$. Thus \eqref{beta2} holds at a.e. $t$ such that $u_i$ is differentiable at $t$ and $u_i(t) \in\{ 0,1\}$. Finally by the above theorem $u$ is Lipschitz and is therefore differentiable a.e., completing the proof. 
%\end{proof}
\section{Semi-discrete scheme}\label{SDsec}
As in the previous section, we must extend \eqref{sdAC} to the non-differentiable case of the double-obstacle potential. Then our semi-discrete scheme becomes 
\be\label{SDobs0}
\frac{u_{n+1} - e^{-\tau\Delta}u_n}{\tau}\in - \frac{1}{\varepsilon}\partial W(u_{n+1}).\ee
More explicitly, we make the following definition.
\begin{mydef}[Semi-discrete scheme]\label{SDdef}
Given $u_0\in\V_{[0,1]}$ and writing $\lambda:=\tau/\varepsilon$, we define a \emph{semi-discrete scheme} $(u_n,\beta_n)$ with initial condition $u_0$, for all $n\in\mathbb{N}$, by the rule\emph{:}
\begin{align}
\label{SDobs} &(1-\lambda)u_{n+1} -e^{-\tau\Delta}u_n+\frac{\lambda}{2}\mathbf{1} =\lambda\beta_{n+1}, & \beta_{n+1}\in\mathcal{B}(u_{n+1}),
\end{align}
recalling the notation $\mathcal{B}(u_{n+1})$ from \eqref{beta}. Note that for given $u_n$, $u_{n+1}$ satisfies \eqref{SDobs0} if and only if $(u_{n+1},\beta_{n+1})$ satisfy \eqref{SDobs} for some $\beta_{n+1}\in\mathcal{B}(u_{n+1})$.
\end{mydef} 
\subsection{Variational form and link to the MBO scheme}
We now prove the central result of this paper.
\begin{thm}
\label{obsMMthm}
If $0\leq\tau\leq\varepsilon$ then $(u_{n+1},\beta_{n+1})$ is a solution to the semi-discrete scheme \eqref{SDobs} for some $\beta_{n+1}\in\mathcal{B}(u_{n+1})$ if and only if $u_{n+1}$ solves the variational equation\emph{:}
 \begin{equation}
	\label{ACobsMM}
	u_{n+1}\in \underset{u\in \V_{[0,1]}}{ \argmin }  \: \lambda\left\langle u,\mathbf{1}-u\right \rangle_\V +  \left|\left|u-e^{-\tau\Delta}u_n\right|\right|^2_\V.
\end{equation}
Furthermore if $\tau = \varepsilon$ then \eqref{ACobsMM} is equivalent to \eqref{MBOMM}, the minimisation governing the MBO updates, so in this case the semi-discrete scheme is identical to the MBO scheme. 

Finally if $\tau < \varepsilon$ then \eqref{ACobsMM} has unique solution
\begin{equation}
	\label{ACobssoln}
	(u_{n+1})_i=\begin{cases}
		0, &\text{ if } \left(e^{-\tau\Delta}u_n\right)_i < \frac{1}{2}\lambda
\\
		\frac{1}{2} + \frac{\left(e^{-\tau\Delta}u_n\right)_i - 1/2}{1-\lambda}
, &\text{ if }\frac{1}{2}\lambda
\leq\left(e^{-\tau\Delta}u_n\right)_i < 1-\frac{1}{2}\lambda
\\
		1, &\text{ if } \left(e^{-\tau\Delta}u_n\right)_i \geq 1-\frac{1}{2}\lambda
	\end{cases}
\end{equation}
with corresponding $\beta$ term
\be\label{betasoln}
(\beta_{n+1})_i =\begin{cases}
		\frac{1}{2}-\lambda^{-1}(e^{-\tau\Delta}u_n)_i, &\text{ if } \left(e^{-\tau\Delta}u_n\right)_i < \frac{1}{2}\lambda, \\
		0, &\text{ if }\frac{1}{2}\lambda
\leq\left(e^{-\tau\Delta}u_n\right)_i < 1-\frac{1}{2}\lambda,
\\
		-\frac{1}{2}+\lambda^{-1}(1-(e^{-\tau\Delta}u_n)_i), &\text{ if } \left(e^{-\tau\Delta}u_n\right)_i \geq 1-\frac{1}{2}\lambda.
	\end{cases}\ee
\end{thm}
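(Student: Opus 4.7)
My plan is to recognise \eqref{SDobs} as the constrained first-order optimality condition for \eqref{ACobsMM}, then handle the MBO-equivalence and the explicit formulas as direct consequences.

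First I would expand the functional in \eqref{ACobsMM}, using $\langle u,\mathbf{1}-u\rangle_\V = \langle u,\mathbf{1}\rangle_\V - \|u\|_\V^2$, to obtain
\[
F(u) = (1-\lambda)\|u\|_\V^2 + \lambda\langle u,\mathbf{1}\rangle_\V - 2\langle u,e^{-\tau\Delta}u_n\rangle_\V + \|e^{-\tau\Delta}u_n\|_\V^2.
\]
Since $0\leq\lambda\leq 1$, $F$ is convex (strictly so when $\lambda<1$) and continuous on the compact convex set $\V_{[0,1]}$, so a minimiser exists. The standard constrained first-order condition reads $\langle\nabla_\V F(u_{n+1}),\eta-u_{n+1}\rangle_\V\geq 0$ for all $\eta\in\V_{[0,1]}$. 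Computing $\nabla_\V F(u)=2\bigl[(1-\lambda)u-e^{-\tau\Delta}u_n+\tfrac{\lambda}{2}\mathbf{1}\bigr]$ and \emph{defining} $\beta_{n+1}$ via the equation in \eqref{SDobs}, this becomes the variational inequality $\langle\beta_{n+1},\eta-u_{n+1}\rangle_\V\geq 0$ for all $\eta\in\V_{[0,1]}$. By the same vertex-wise argument already used in the proof of Theorem \ref{ACobsweak} (choose $\eta$ differing from $u_{n+1}$ at a single vertex and vary $\eta_i\in[0,1]$), this variational inequality is equivalent to $\beta_{n+1}\in\mathcal{B}(u_{n+1})$. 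This yields the stated equivalence between \eqref{SDobs} and \eqref{ACobsMM}.

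The $\tau=\varepsilon$ case is then immediate: setting $\lambda=1$, the functional in \eqref{ACobsMM} is exactly $2\tau$ times the functional in \eqref{MBOMM}, so by the $\simeq$ equivalence introduced at the end of Section \ref{groundwork} the two minimisations have the same argmin.

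Finally, for $\tau<\varepsilon$ strict convexity gives a unique minimiser, and since both $\nabla_\V F(u)_i$ and the membership $(\beta_{n+1})_i\in -\partial I_{[0,1]}((u_{n+1})_i)$ decouple across vertices, I solve \eqref{SDobs} at each $i$ separately. Using $\mathcal{B}$, the interior case $(u_{n+1})_i\in(0,1)$ forces $(\beta_{n+1})_i=0$, whence $(u_{n+1})_i=[(e^{-\tau\Delta}u_n)_i-\lambda/2]/(1-\lambda)$, which lies in $(0,1)$ precisely when $(e^{-\tau\Delta}u_n)_i\in(\lambda/2,\,1-\lambda/2)$; the boundary case $(u_{n+1})_i=0$ requires $(\beta_{n+1})_i\geq 0$, i.e.\ $(e^{-\tau\Delta}u_n)_i\leq\lambda/2$; and $(u_{n+1})_i=1$ requires $(\beta_{n+1})_i\leq 0$, i.e.\ $(e^{-\tau\Delta}u_n)_i\geq 1-\lambda/2$. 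These three exclusive regimes cover all values of $(e^{-\tau\Delta}u_n)_i$ (with consistent matching at the thresholds), producing \eqref{ACobssoln}; substituting back into \eqref{SDobs} gives \eqref{betasoln}. The only step requiring genuine care is the equivalence between the variational inequality and the subdifferential inclusion $\beta_{n+1}\in\mathcal{B}(u_{n+1})$; everything else is bookkeeping on a strictly convex quadratic.
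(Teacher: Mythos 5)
Your proof is correct, and it reorganises the argument around the standard convex-analysis characterisation of constrained minimisers: since $F$ is convex on the convex set $\V_{[0,1]}$ precisely because $\lambda\leq 1$, the point $u_{n+1}$ minimises $F$ if and only if $\langle\nabla_\V F(u_{n+1}),\eta-u_{n+1}\rangle_\V\geq 0$ for all admissible $\eta$, and this variational inequality is equivalent, vertex by vertex, to $\beta_{n+1}\in\mathcal{B}(u_{n+1})$ by the same test functions used in Theorem \ref{ACobsweak}. The paper instead proves the two implications separately and by hand: for ``scheme implies minimiser'' it expands $F(\eta)-F(u_{n+1})$ directly and rearranges it into $2\lambda\ip{\eta-u_{n+1},\beta_{n+1}}+(1-\lambda)\|\eta-u_{n+1}\|^2_\V\geq 0$, which is nonnegative by the sign structure of $\mathcal{B}$; for the converse it explicitly minimises the decoupled per-vertex quadratic when $\lambda<1$, obtains \eqref{ACobssoln}, and verifies that the resulting $\beta$ lies in $\mathcal{B}(u_{n+1})$, treating $\lambda=1$ separately via the reduction to \eqref{MBO}. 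Your route gets both directions in one stroke and makes the obstacle-problem structure of the update transparent, at the cost of invoking the first-order optimality theorem for convex problems; the paper's direct expansion is self-contained and, as a by-product, produces the quantitative inequality that is reused verbatim in the proof of Theorem \ref{Lyapthm}. The explicit formulas \eqref{ACobssoln}--\eqref{betasoln} are obtained by you and by the paper through essentially the same vertex-wise case analysis. One trivial wrinkle: defining $\beta_{n+1}$ by dividing \eqref{SDobs} by $\lambda$ breaks down at $\lambda=0$, where the scheme degenerates to $u_{n+1}=u_n$ and the claim is immediate; a one-line remark disposing of that case would make the argument airtight.
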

\begin{proof}
Let $(u_{n+1},\beta_{n+1})$ solve \eqref{SDobs}. First, note that $\mathcal{B}(u_{n+1})$ is non-empty and so $u_{n+1}\in\V_{[0,1]}$. We seek to prove that for $0\leq\lambda\leq 1$ and $\forall\eta\in\V_{[0,1]}$:
\begin{equation*}\begin{split}
&\lambda\ip{u_{n+1},\mathbf{1}-u_{n+1}}+ \left\langle u_{n+1}-e^{-\tau\Delta}u_n,u_{n+1}-e^{-\tau\Delta}u_n\right \rangle_\V
\\& 
\leq \lambda\ip{\eta,\mathbf{1}-\eta}+ \left\langle \eta-e^{-\tau\Delta}u_n,\eta-e^{-\tau\Delta}u_n\right \rangle_\V 
\end{split}\end{equation*}
By rearranging and cancelling this is equivalent to \begin{equation*} \begin{split}
	0& \leq  \left\langle \eta-u_{n+1},\lambda\mathbf{1}-2 e^{-\tau\Delta}u_n \right \rangle_\V + (1-\lambda)\left(\ip{\eta,\eta}-\ip{u_{n+1},u_{n+1}}\right)\\
	&= \left\langle \eta-u_{n+1},\lambda\mathbf{1}-2 e^{-\tau\Delta}u_n +(1-\lambda)(\eta+u_{n+1}) \right \rangle_\V\\
	&= \left\langle \eta-u_{n+1},2\lambda\beta_{n+1} +(1-\lambda)(\eta-u_{n+1}) \right \rangle_\V \\
	&= 2\lambda\left\langle \eta-u_{n+1},\beta_{n+1}\right \rangle_\V +(1-\lambda)||\eta-u_{n+1}||^2_\V 
\end{split}  \end{equation*}
but it is easy to check from \eqref{beta} that $(\beta_{n+1})_i$ is either zero, when $(u_{n+1})_i\in(0,1)$, or has the same sign as $\eta_i -(u_{n+1})_i$ since $\eta_i\in [0,1]$, so $\left\langle \eta-u_{n+1},\beta_{n+1}\right \rangle_\V\geq 0$.

Now let $u$ solve \eqref{ACobsMM}. The functional in \eqref{ACobsMM} can be written \[ \lambda\left\langle u,\mathbf{1}-u\right \rangle_\V +  \left|\left|u-e^{-\tau\Delta}u_n\right|\right|^2_\V = \sum_{i\in V} d_i^r \left(\lambda u_i(1-u_i) + \left(u_i-\left(e^{-\tau\Delta}u_n\right)_i\right)^2\right).\] 
%so we can reduce 	\eqref{ACobsMM} to the system of 1-dimensional problems \[ (u_{n+1})_i \in \underset{x\in [0,1]}{ \argmin } \: \lambda x(1-x) + \left(x-\left(e^{-\tau\Delta}u_n\right)_i\right)^2.\]
Differentiating, we get that, for $0\leq\lambda < 1$, $\lambda x(1-x) + \left(x-\left(e^{-\tau\Delta}u_n\right)_i\right)^2$ is minimised at  
\[
	x = \frac{\left(e^{-\tau\Delta}u_n\right)_i - \lambda/2}{1-\lambda} = \frac{1}{2} + \frac{\left(e^{-\tau\Delta}u_n\right)_i - 1/2}{1-\lambda}.
\]
Therefore for $0\leq \lambda<1$ the solution $u$ is given by 
\begin{equation*}
	u_i=\begin{cases}
		0, &\text{ if } \left(e^{-\tau\Delta}u_n\right)_i < \frac{1}{2}\lambda
\\
		\frac{1}{2} + \frac{\left(e^{-\tau\Delta}u_n\right)_i - 1/2}{1-\lambda}
, &\text{ if }\frac{1}{2}\lambda
\leq\left(e^{-\tau\Delta}u_n\right)_i < 1-\frac{1}{2}\lambda
\\
		1, &\text{ if } \left(e^{-\tau\Delta}u_n\right)_i \geq 1-\frac{1}{2}\lambda
	\end{cases}
\end{equation*}
and hence 
\[\frac{1}{\lambda}\left((1-\lambda)u_i -(e^{-\tau\Delta}u_n)_i+\frac{\lambda}{2}\right) =\begin{cases}
		\frac{1}{2}-\frac{1}{\lambda}(e^{-\tau\Delta}u_n)_i, &\text{ if } \left(e^{-\tau\Delta}u_n\right)_i < \frac{1}{2}\lambda, \\
		0, &\text{ if }\frac{1}{2}\lambda
\leq\left(e^{-\tau\Delta}u_n\right)_i < 1-\frac{1}{2}\lambda,
\\
		-\frac{1}{2}+\frac{1}{\lambda}(1-(e^{-\tau\Delta}u_n)_i), &\text{ if } \left(e^{-\tau\Delta}u_n\right)_i \geq 1-\frac{1}{2}\lambda.
	\end{cases}\]
Thus $\beta = \lambda^{-1}\left((1-\lambda)u -e^{-\tau\Delta}u_n+\frac{\lambda}{2}\mathbf{1}\right) \in\mathcal{B}(u)$, so $u$ solves \eqref{SDobs}.

If $\lambda=1$ then examine the functional in \eqref{ACobsMM} for $\lambda = 1$:
\begin{equation}\label{lamb1}\begin{split}
%\hspace*{-0.2em}
&\left\langle u,\mathbf{1}-u\right \rangle_\V +  \left|\left|u-e^{-\tau\Delta}u_n\right|\right|^2_\V
\\&
= \ip{u,\mathbf{1}} -\ip{u,u} + \ip{u,u} - 2 \left\langle u,e^{-\tau\Delta}u_n\right \rangle_\V +  \left\langle e^{-\tau\Delta}u_n,e^{-\tau\Delta}u_n\right \rangle_\V\\
&\simeq \left\langle u,\mathbf{1}-2e^{-\tau\Delta}u_n\right \rangle_\V, 
\end{split}\end{equation}
and therefore $u$ as a minimiser must obey \[u_i \in \begin{cases}
	\{1\}, &(e^{-\tau\Delta}u_n)_i > 1/2,\\
	[0,1], &(e^{-\tau\Delta}u_n)_i = 1/2,\\
	\{0\}, &(e^{-\tau\Delta}u_n)_i < 1/2.
\end{cases} \]
Hence $\beta\in\mathcal{B}(u)$ if and only if for each $i\in V$ 
\[
\beta_i\in\begin{cases}
		[0,\infty), & (e^{-\tau\Delta}u_n)_i \leq 1/2\\
		\{0\}, &(e^{-\tau\Delta}u_n)_i = 1/2, u_i\in(0,1)\\
		(-\infty,0], & (e^{-\tau\Delta}u_n)_i \geq 1/2
	\end{cases}
\]
and thus $\frac{1}{2}\mathbf{1}- e^{-\tau\Delta}u_n\in\mathcal{B}(u)$, so $u$ solves \eqref{SDobs}.
\end{proof}
%\begin{proof}[Proof of Theorem \ref{thm1}]
%Follows directly from \eqref{lamb1}: for $\lambda=1$ the functional in \eqref{ACobsMM} is the functional describing MBO.
%\end{proof} 
\begin{nb}
The $0\leq\lambda<1$ semi-discrete solution tends to the MBO solution as $\lambda\uparrow 1$. Indeed, it is a  relaxation of the MBO thresholding \emph{(}see Fig. \ref{SDfig}\emph{)}.
\end{nb}
\begin{figure}[H] 
\centering
\begin{tikzpicture}
\begin{axis}[
	%scale only axis,
	height = 6cm,
    axis lines = left,axis line style={-},
    xlabel = $\left(e^{-\tau\Delta}u_n\right)_i$,
    ylabel = {$(u_{n+1})_i$},
    xtick={0,0.5,1},
    ytick={0,0.5,1},
    legend pos=south east,
    extra x ticks={0.3,0.7},
    extra x tick style={
    tick label style={
    ,anchor=north}},
    extra x tick labels={$\frac{1}{2}\lambda$,$1-\frac{1}{2}\lambda$},
xmin=0,xmax=1
]

\addplot [
    domain=0:0.3, 
    samples=100, 
    color=red,
]
{0};

\addplot [
    domain=0.3:0.7, 
    samples=100, 
    color=red,
]
{0.5 + (x-0.5)/0.4};
\addplot [
    domain=0.7:1, 
    samples=100, 
    color=red,
]
{1};
\end{axis}
\begin{axis}[
%scale only axis,
height = 6cm,
axis y line*=right,
axis x line=none,
	axis line style={-},
    ylabel = {$(\beta_{n+1})_i$},
    ytick={-0.5,0,0.5},
xmin=0,xmax=1,ymin=-0.5,ymax=0.5]

\addplot [
    domain=0:0.3, 
    samples=100, 
    color=blue,
]
{0.5-x/0.6};

\addplot [
    domain=0.3:0.7, 
    samples=100, 
    color=blue,
]
{0};
\addplot [
    domain=0.7:1, 
    samples=100, 
    color=blue,
]
{-0.5+(1-x)/0.6};
\end{axis}
\end{tikzpicture}
\caption{Plot of the semi-discrete updates $u_{n+1}$ (red, left axis, see \eqref{ACobssoln}) and $\beta_{n+1}$ (blue, right axis, see \eqref{betasoln}),  for $\lambda \in[0,1)$, at $i\in V$ against the diffused value at $i$. Observe that the $u_{n+1}$ solution is a piecewise linear softening of the MBO thresholding.}
\label{SDfig}
\end{figure}
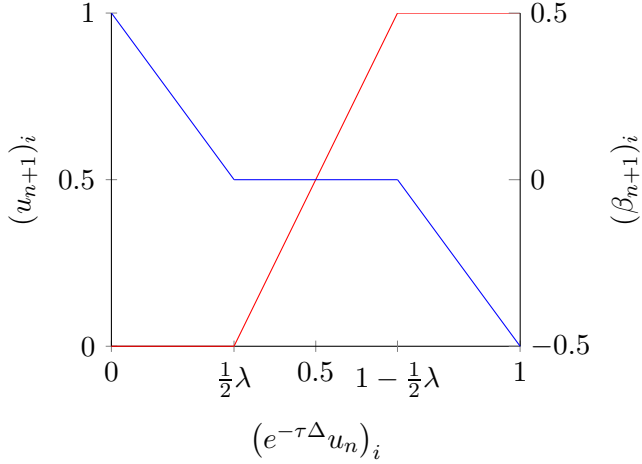 
As a simple consequence, we prove a (Lipschitz) continuity property we will later use when proving well-posedness of double-obstacle AC flow (i.e. Theorem \ref{wellp}).
\begin{thm} \label{SDLips} 
For $\lambda < 1$ and all $n\in\mathbb{N}$, if $u_n$ and $v_n$ are defined according to Definition~\ref{SDdef} with initial states $u_0,v_0\in\V_{[0,1]}$ then
\be\label{SDlipeqn}
||u_n-v_n||_\V \leq (1-\lambda)^{-n}||u_0-v_0||_\V.
\ee
\end{thm}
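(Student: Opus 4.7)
The plan is to prove the bound by induction on $n$, reducing to the one-step Lipschitz estimate $\|u_{n+1}-v_{n+1}\|_\V \leq (1-\lambda)^{-1}\|u_n-v_n\|_\V$. Writing \eqref{SDobs} for both sequences and subtracting gives
\[
(1-\lambda)(u_{n+1}-v_{n+1}) - e^{-\tau\Delta}(u_n-v_n) = \lambda(\beta_{n+1}-\gamma_{n+1}),
\]
where $\beta_{n+1}\in\mathcal{B}(u_{n+1})$ and $\gamma_{n+1}\in\mathcal{B}(v_{n+1})$. Testing against $u_{n+1}-v_{n+1}$ in $\ip{\cdot,\cdot}$ yields
\[
(1-\lambda)\|u_{n+1}-v_{n+1}\|_\V^2 = \bigl\langle e^{-\tau\Delta}(u_n-v_n),\, u_{n+1}-v_{n+1}\bigr\rangle_\V + \lambda\ip{\beta_{n+1}-\gamma_{n+1},u_{n+1}-v_{n+1}}.
\]

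The key step is to discard the $\beta$-term via monotonicity. Since $-\beta_{n+1,i}\in\partial I_{[0,1]}(u_{n+1,i})$ and $-\gamma_{n+1,i}\in\partial I_{[0,1]}(v_{n+1,i})$ componentwise, monotonicity of the scalar convex subdifferential $\partial I_{[0,1]}$ gives $(\beta_{n+1,i}-\gamma_{n+1,i})(u_{n+1,i}-v_{n+1,i})\leq 0$ for each $i\in V$; multiplying by $d_i^r\geq 0$ and summing yields $\ip{\beta_{n+1}-\gamma_{n+1},u_{n+1}-v_{n+1}}\leq 0$. Thus
\[
(1-\lambda)\|u_{n+1}-v_{n+1}\|_\V^2 \leq \bigl\langle e^{-\tau\Delta}(u_n-v_n),\, u_{n+1}-v_{n+1}\bigr\rangle_\V.
\]

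Cauchy--Schwarz bounds the right-hand side by $\|e^{-\tau\Delta}(u_n-v_n)\|_\V \cdot \|u_{n+1}-v_{n+1}\|_\V$. Since $\Delta$ is self-adjoint and positive semi-definite with respect to $\ip{\cdot,\cdot}$, the operator $e^{-\tau\Delta}$ is self-adjoint with eigenvalues in $(0,1]$, so $\|e^{-\tau\Delta}\|\leq 1$; hence $\|e^{-\tau\Delta}(u_n-v_n)\|_\V\leq \|u_n-v_n\|_\V$. Dividing through by $\|u_{n+1}-v_{n+1}\|_\V$ (the estimate is trivial if this vanishes) and using $\lambda<1$ delivers the one-step bound
\[
\|u_{n+1}-v_{n+1}\|_\V \leq (1-\lambda)^{-1}\|u_n-v_n\|_\V,
\]
from which induction gives \eqref{SDlipeqn}. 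The main conceptual obstacle is handling the differential inclusion nature of \eqref{SDobs}, which is resolved cleanly by monotonicity of $\partial I_{[0,1]}$ together with the fact that the $\V$-inner product preserves componentwise sign inequalities via its nonnegative vertex weights $d_i^r$.
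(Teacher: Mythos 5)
Your proof is correct, but it follows a genuinely different route from the paper's. The paper first invokes Theorem \ref{obsMMthm} to write the update explicitly as $u_{n+1}=\rho_\lambda(e^{-\tau\Delta}u_n)$, where $\rho_\lambda$ is the piecewise-linear thresholding map of \eqref{ACobssoln}; since $e^{-\tau\Delta}$ is $1$-Lipschitz and $\rho_\lambda$ acts componentwise with maximal slope $(1-\lambda)^{-1}$, the $n$-fold composition is $(1-\lambda)^{-n}$-Lipschitz. You instead work directly with the inclusion \eqref{SDobs}: subtracting the two defining relations, testing against $u_{n+1}-v_{n+1}$, discarding the obstacle term by monotonicity of $\partial I_{[0,1]}$ (valid in $\ip{\cdot,\cdot}$ because the weights $d_i^r$ are nonnegative), and then applying Cauchy--Schwarz with $\left|\left|e^{-\tau\Delta}\right|\right|\leq 1$. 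This is the standard ``resolvent of a monotone operator is nonexpansive'' argument adapted to the semi-implicit splitting. What the paper's approach buys is immediacy (the explicit formula is already on hand) and the concrete picture of the scheme as diffusion followed by a soft threshold; what yours buys is robustness: it never needs the closed-form solution, only the structure $W=\text{concave quadratic}+\text{convex indicator}$, so it would transfer to other obstacles or constrained variants where no explicit update is available. One small caveat to note explicitly: your discarding of the cross term uses $\lambda\geq 0$, which holds since $\lambda=\tau/\varepsilon$ with $\tau,\varepsilon>0$, and your division by $1-\lambda$ uses $\lambda<1$; both restrictions are also implicit in the paper's proof, which relies on \eqref{ACobssoln} being the unique update for $0\leq\lambda<1$.
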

\begin{proof}
Let $\rho_\lambda:\V_{[0,1]}\rightarrow\V_{[0,1]}$ be the thresholding operator in \eqref{ACobssoln}, i.e. 
\[
(\rho_\lambda(u))_i:=\begin{cases}
		0, &\text{if } u_i < \frac{1}{2}\lambda,
\\
		\frac{1}{2} + \frac{u_i - 1/2}{1-\lambda}
, &\text{if }\frac{1}{2}\lambda
\leq u_i < 1-\frac{1}{2}\lambda,
\\
		1, &\text{if } u_i \geq 1-\frac{1}{2}\lambda.
	\end{cases}
\]
\addtocounter{footnote}{1}
Then by Theorem \ref{obsMMthm} it follows that \footnotetext{We remark the curiosity that, thus expressed, this scheme can be seen to resemble a forward pass through a neural net, with diffusion playing the role of the linear operator and $\rho_\lambda$ the activation function.}
\addtocounter{footnote}{-1}
\begin{align*}
&u_n = (\rho_\lambda \circ e^{-\tau\Delta})^n(u_0), & v_n = (\rho_\lambda \circ e^{-\tau\Delta})^n(v_0) .\footnotemark
\end{align*}
Note that $e^{-\tau\Delta}$ is linear, so it is Lipschitz with constant $||e^{-\tau\Delta}|| = \max \{ e^{-\tau\mu} \mid \mu\in\sigma(\Delta)\} = 1$, and $\rho_\lambda(u)$ is piecewise linear in $u$ with greatest slope $(1-\lambda)^{-1}$ and hence it is Lipschitz with constant $(1-\lambda)^{-1}$. Thus $(\rho_\lambda \circ e^{-\tau\Delta})^n$ is Lipschitz with constant $(1-\lambda)^{-n}$.
\end{proof}
In \cite[Theorem 4.2]{vGGOB}, it was proved that for $\tau$ below certain thresholds and $u_n=\chi_S$, the indicator function of $S\subseteq V$, the MBO scheme exhibits ``pinning" (or ``freezing"). That is, $u_{n+1} = \chi_S$. We here prove an analogous result for our semi-discrete scheme. 
\begin{thm}[Cf. $\text{\cite[Theorem 4.2]{vGGOB}}$]\label{pinningthm}
If $u_n=\chi_S$ and $0\leq\lambda <1$, then the semi-discrete scheme pins, i.e. $u_{n+1}=\chi_S$, if 
\begin{align}
&\label{pin1}\tau\leq||\Delta||^{-1}\log\left(1 +\frac{\lambda}{2} \sqrt{\frac{\min_{i\in V} d_i^r}{\ip{\chi_S,\mathbf{1}}}}\,\right), \text{ or}\\
&\label{pin2}\tau \leq\frac{\lambda}{2||\Delta\chi_S||_\infty}.
\end{align}
\end{thm}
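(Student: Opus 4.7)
The plan is to reduce the pinning claim to a single pointwise estimate
\[ \|(e^{-\tau\Delta}-I)\chi_S\|_\infty \le \lambda/2, \]
and then establish each sufficient condition by a different bound on the right-hand side. By Theorem~\ref{obsMMthm}, $u_{n+1}$ is computed componentwise from $e^{-\tau\Delta}\chi_S$ via \eqref{ACobssoln}. The graph heat semigroup preserves $[0,1]$-valued functions: at any spatial maximum $i$ of $w(t)=e^{-t\Delta}w_0$ one has $(\Delta w)_i\ge 0$, hence $dw_i/dt \le 0$, so $\max_j w_j(t)$ is non-increasing and (applying the same to $-w$) $\min_j w_j(t)$ is non-decreasing. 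Thus $(e^{-\tau\Delta}\chi_S)_i \in [0,1]$. Inspecting the three cases of \eqref{ACobssoln}, the middle branch evaluates to $0$ at input $\lambda/2$ and to $1$ at input $1-\lambda/2$, so pinning $u_{n+1}=\chi_S$ is guaranteed whenever the displayed inequality holds for every $i \in V$.

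For \eqref{pin1}, I would use a spectral operator-norm bound on $\V$. Since $\Delta$ is self-adjoint with non-negative spectrum, the standard Taylor-series estimate gives
\[ \|I - e^{-\tau\Delta}\| \le \sum_{n\ge 1}\frac{(\tau\|\Delta\|)^n}{n!} = e^{\tau\|\Delta\|}-1. \]
Combined with $\|\chi_S\|_\V^2 = \sum_{i\in S}d_i^r = \ip{\chi_S,\mathbf{1}}$, this yields
\[ \|(e^{-\tau\Delta}-I)\chi_S\|_\V \le (e^{\tau\|\Delta\|}-1)\sqrt{\ip{\chi_S,\mathbf{1}}}. \]
The elementary inequality $\|v\|_\infty \le \|v\|_\V/\sqrt{\min_i d_i^r}$ (following from $\|v\|_\V^2 \ge (\min_i d_i^r)\|v\|_\infty^2$) converts this to an $\infty$-norm bound, and \eqref{pin1} is precisely the rearrangement forcing the result to be at most $\lambda/2$.

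For \eqref{pin2}, I would instead use a Duhamel identity together with the $\infty$-norm contractivity of the heat semigroup. Writing
\[ e^{-\tau\Delta}\chi_S - \chi_S = -\int_0^\tau e^{-s\Delta}\Delta\chi_S \, ds, \]
and noting that the maximum-principle argument above also gives $\|e^{-s\Delta}w\|_\infty \le \|w\|_\infty$ for any $w\in\V$, the triangle inequality immediately yields $\|(e^{-\tau\Delta}-I)\chi_S\|_\infty \le \tau\|\Delta\chi_S\|_\infty$, which is $\le \lambda/2$ under \eqref{pin2}.

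The main obstacle is fairly modest: it lies in the reduction step, namely verifying that the boundary equality cases across the three branches of \eqref{ACobssoln} still yield the correct $\chi_S$-value at $i$, together with an explicit statement of the $\infty$-norm contractivity of $e^{-s\Delta}$, which, though a routine consequence of the graph maximum principle, is not recorded earlier in the paper. The rest is direct manipulation of semigroup norms.
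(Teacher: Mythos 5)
Your proposal is correct and follows essentially the same route as the paper: the paper's proof likewise reduces pinning to the inequality $\left|\left|e^{-\tau\Delta}\chi_S-\chi_S\right|\right|_\infty\leq\lambda/2$ via \eqref{ACobssoln} and then obtains \eqref{pin1} and \eqref{pin2} by adapting the estimates in the proof of \cite[Theorem 4.2]{vGGOB}, which are precisely the operator-norm/Taylor bound combined with $||v||_\infty\leq ||v||_\V/\sqrt{\min_i d_i^r}$ and the Duhamel plus $\infty$-norm contractivity argument you give. The only difference is that you write out explicitly the details the paper delegates to the citation, including the boundary cases of the thresholding, which is a welcome but not substantive addition.
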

\begin{proof}
The proof of  \cite[Theorem 4.2]{vGGOB} followed from observing that for the MBO scheme, $u_{n+1}=u_n$ if $||e^{-\tau\Delta}u_n-u_n||_\infty<1/2$. Inspecting \eqref{ACobssoln} we see that, for $0\leq \lambda<1$, we likewise have for the semi-discrete scheme that if $u_n=\chi_S$, then $u_{n+1}=\chi_S$ if and only if $ ||e^{-\tau\Delta}\chi_S-\chi_S||_\infty\leq\lambda/2$. Hence, the theorem is proved by modifying appropriately the proof of \cite[Theorem 4.2]{vGGOB} to derive sufficient conditions for this inequality to hold. 
\end{proof}
\begin{nb}Note that for $\lambda=1$, the result of Theorem~\ref{pinningthm} still holds if we turn \eqref{pin1} into a strict inequality, as per \emph{\cite[Theorem 4.2]{vGGOB}}.  \end{nb}
\begin{nb}
Note that \eqref{pin2} is equivalent to $\varepsilon\leq \frac{1}{2} ||\Delta\chi_S||_\infty^{-1}$. Since, as we shall prove in Theorem \ref{SDlimit}, the semi-discrete iterates converge to the AC solution as $\tau\downarrow 0$ for $\varepsilon$ fixed, it follows that if $\varepsilon\leq \frac{1}{2} ||\Delta\chi_S||_\infty^{-1}$ and $u(0) = \chi_S$ then the AC flow also pins, i.e. $u(t) = \chi_S$ for all $t\geq 0$. This can be checked directly from \eqref{ACobs2} by observing that $u(t):= \chi_S$ for all $t\geq 0$ is a valid trajectory \emph{(}i.e. has a corresponding $\beta(t)\in \mathcal{B}(\chi_S)$ for a.e. $t\geq0$\emph{)} if and only if $\varepsilon||\Delta\chi_S||_\infty\leq \frac{1}{2}$, by the characterisation of $\beta$ in Theorem~\ref{betathm}.
\end{nb}
%We round out this subsection by noting some trivia.
%\begin{nb}
%Notice the similarity between \eqref{beta2} and the expression for the $\beta_{n+1}$ term derived in the proof of Theorem~\ref{obsMMthm}{:} 
%%\be
%\begin{subequations}\label{betasoln}
%\begin{align}
%%\begin{split}
%\lambda < 1, &\:\:\: (\beta_{n+1})_i =\begin{cases}
%		\frac{1}{2}-\lambda^{-1}(e^{-\tau\Delta}u_n)_i, &\text{ if } \left(e^{-\tau\Delta}u_n\right)_i < \frac{1}{2}\lambda, \\
%		0, &\text{ if }\frac{1}{2}\lambda
%\leq\left(e^{-\tau\Delta}u_n\right)_i < 1-\frac{1}{2}\lambda,
%\\
%		-\frac{1}{2}+\lambda^{-1}(1-(e^{-\tau\Delta}u_n)_i), &\text{ if } \left(e^{-\tau\Delta}u_n\right)_i \geq 1-\frac{1}{2}\lambda.
%	\end{cases}\\
%\lambda = 1, &\:\:\: \beta_{n+1} = \frac{1}{2}\mathbf{1} - e^{-\tau\Delta} u_n. 
%%\end{split}
%%\ee
%\end{align}\end{subequations}
%Indeed, we can exaggerate this similarity further by noting that $e^{-\tau\Delta} = I - \tau\Delta +\bigO(\tau^2)$ and recalling $\lambda := \tau/\varepsilon$. Therefore for $\tau \ll \varepsilon$, 
%\[
%%\begin{split}
% (\beta_{n+1})_i \approx\begin{cases}
%		\frac{1}{2}+\varepsilon(\Delta u_n)_i-\lambda^{-1}(u_n)_i, &\text{ if } \left(e^{-\tau\Delta}u_n\right)_i < \frac{1}{2}\lambda\\
%		0, &\text{ if }\frac{1}{2}\lambda
%\leq\left(e^{-\tau\Delta}u_n\right)_i < 1-\frac{1}{2}\lambda,
%\\
%		-\frac{1}{2}+\varepsilon(\Delta u_n)_i+\lambda^{-1}(1-(u_n)_i), &\text{ if } \left(e^{-\tau\Delta}u_n\right)_i \geq 1-\frac{1}{2}\lambda
%	\end{cases}
%%\end{split}
%%\ee
%\]
%with $\bigO(\tau)$ error. 
%\end{nb}
\begin{nb} 
We explore the cases when $\lambda\notin[0,1]$. If $\lambda\geq 1$ then $\lambda x(1-x) + \left(x-\left(e^{-\tau\Delta}u_n\right)_i\right)^2$ is concave, so is minimised on the boundary. Thus the $\lambda\ip{u,\mathbf{1}-u}$ term in \eqref{ACobsMM} equals zero, and minimising the norm term gives:
\begin{equation}
	\label{ACobssoln2a}
	(u_{n+1})_i=\begin{cases}
		0, &\text{ if } \left(e^{-\tau\Delta}u_n\right)_i < \frac{1}{2}
\\
		1, &\text{ if } \left(e^{-\tau\Delta}u_n\right)_i > \frac{1}{2}
			\end{cases}
			\end{equation}
and underdetermined if $\left(e^{-\tau\Delta}u_n\right)_i =1/2$. So the variational problem yields the MBO solution even for $\lambda > 1$.  
However, solutions to \eqref{SDobs} are no longer necessarily solutions to \eqref{ACobsMM} for $\lambda > 1$. Let $\lambda = 1 + \delta$, then \eqref{SDobs} becomes \be\label{SDobsge1}-\delta(u_{n+1})_i -(e^{-\tau\Delta}u_n)_i+\frac{1}{2} + \frac{1}{2}\delta =(1+\delta)(\beta_{n+1})_i.\ee
This has admissible solution \emph{(}i.e. there is a corresponsing $\beta_{n+1}\in\mathcal{B}(u_{n+1})$\emph{)} with
\begin{align*}
(u_{n+1})_i  = 0&&\text{if and only if}&&(e^{-\tau\Delta}u_n)_i\in\left[0,\frac{1}{2}(1+\delta)\right],&\\
(u_{n+1})_i =\frac{1}{2}+\frac{1}{2}\delta ^{-1}-\delta ^{-1}(e^{-\tau\Delta}u_n)_i&&\text{if and only if}&&(e^{-\tau\Delta}u_n)_i\in\left[\frac{1}{2}(1-\delta),\frac{1}{2}(1+\delta)\right],&\\
(u_{n+1})_i  = 1&&\text{if and only if}&&(e^{-\tau\Delta}u_n)_i\in\left[\frac{1}{2}(1-\delta),1\right].&
\end{align*}
Hence for $\lambda>1$ if $(e^{-\tau\Delta}u_n)_i\in\left[\frac{1}{2}(1-\delta),\frac{1}{2}(1+\delta)\right]\setminus\left\{\frac{1}{2}\right\}$ for any $i\in V$ then \eqref{SDobs} has solutions that are not solutions to \eqref{ACobsMM}. However, the solutions to \eqref{ACobsMM} remain solutions to \eqref{SDobs}.

Finally we consider $\lambda<0$, though this regime has less obvious meaning. By the same argument as for $0\leq \lambda < 1$ we get that \eqref{ACobsMM} has unique solution \[u := (1-\lambda)^{-1}\left( e^{-\tau\Delta}u_n - \frac{\lambda}{2}\mathbf{1}\right)\in \V_{(0,1)}\] and so \[\beta=\lambda^{-1}\left((1-\lambda)u -e^{-\tau\Delta}u_n+\frac{\lambda}{2}\mathbf{1}\right) = \mathbf{0} \in \mathcal{B}(u)\] since $u\in\V_{(0,1)}$, so $u$ solves  \eqref{SDobs}. Now, if $v$ solves \eqref{SDobs} then
\begin{align*}
v_i \in (0,1) &\Rightarrow \beta_i = 0 \Rightarrow v_i = \frac{\left(e^{-\tau\Delta}u_n\right)_i -\lambda/2}{1-\lambda}
,\\
v_i = 0  &\Rightarrow \beta_i = \frac{1}{2}-\lambda^{-1}\left(e^{-\tau\Delta}u_n\right)_i\geq  \frac{1}{2},\\
v_i =1  &\Rightarrow \beta_i = -\frac{1}{2} + \lambda^{-1}\left(1-\left(e^{-\tau\Delta}u_n\right)_i \right)\leq - \frac{1}{2}.
\end{align*}
 Hence \eqref{SDobs} has as solution any $v\in \V$ obeying $v_i\in\{0,u_i,1\}$ for all $i\in V$. 

\end{nb}
\subsection{A Lyapunov functional for the semi-discrete flow}
In \cite[Proposition 4.6]{vGGOB} it was proved that the strictly concave functional \begin{equation*}
	J(u) := \ip{\mathbf{1}-u,e^{-\tau\Delta}u}
\end{equation*}
has first variation at $u$ \[ L_u(v) := \left\langle v,\mathbf{1}-2e^{-\tau\Delta}u\right \rangle_\V\] and hence is monotonically decreasing along MBO trajectories. Performing a calculation similar to that of \eqref{lamb1}, we find the functional in \eqref{ACobsMM} is equivalent to \[ F_{u_n}(u):= L_{u_n}(u) + (\lambda-1)\ip{u,\mathbf{1}-u}. \] 
We can therefore deduce a Lyapunov functional for the semi-discrete flow. Using a similar approach to Bertozzi and Luo's analysis of semi-implicit graph Allen--Cahn in \cite{LB2016} we furthermore prove results about long-time behaviour of the semi-discrete flow.
\begin{thm}\label{Lyapthm}
	When $0\leq\lambda\leq 1$ the functional \emph{(}on $\V_{[0,1]}$\emph{)}
	\begin{equation}\label{Lyap}
	H(u):= J(u) + (\lambda-1)\ip{u,\mathbf{1}-u} =\hspace*{-2pt}\footnotemark\, \lambda\ip{u,\mathbf{1}-u} +\left\langle u,\left(I-e^{-\tau\Delta}\right)u\right\rangle_\V
	\end{equation}\footnotetext{Since $e^{-\tau\Delta}$ is self-adjoint and $e^{-\tau\Delta}\mathbf{1}=\mathbf{1}$, $J(u) = \ip{\mathbf{1},u} - \ip{u,e^{-\tau\Delta} u}$ and so it follows that
$J(u) + (\lambda-1)\ip{u,\mathbf{1}-u} = \ip{\mathbf{1},u} - \ip{u,e^{-\tau\Delta} u} + \lambda\ip{u,\mathbf{1}-u} - \ip{\mathbf{1},u} + \ip{u,u}$.}
is non-negative, and furthermore $H$ is a Lyapunov functional for \eqref{SDobs}, i.e. $H(u_{n+1}) \leq H(u_n)$ with equality if and only if $u_{n+1}=u_n$ for $u_{n+1}$ defined by \eqref{SDobs}. In particular, \begin{equation}\label{Hstep}
		H(u_n)-H(u_{n+1}) \geq (1-\lambda)\left|\left|u_{n+1}-u_n\right|\right|^2_\V.
	\end{equation} 
\end{thm}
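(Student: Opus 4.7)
The plan is to split $H = J + (\lambda-1)Q$, where $Q(u) := \langle u, \mathbf{1}-u\rangle_\V$, and combine three ingredients: an exact second-order expansion of $J$ from its strict concavity, the identity $F_{u_n} = L_{u_n} + (\lambda-1)Q$ noted just before the theorem, and the subdifferential inclusion $\beta_{n+1}\in\mathcal{B}(u_{n+1})$ coming from \eqref{SDobs}. Non-negativity of $H$ is immediate from the second form of \eqref{Lyap}: since $u\in\V_{[0,1]}$ and $\lambda\geq 0$ we have $\lambda\langle u,\mathbf{1}-u\rangle_\V = \lambda\sum_i d_i^r u_i(1-u_i)\geq 0$, and since $\Delta$ is self-adjoint with respect to $\langle\cdot,\cdot\rangle_\V$ with non-negative eigenvalues, $I-e^{-\tau\Delta}$ is self-adjoint with eigenvalues in $[0,1)$, whence $\langle u,(I-e^{-\tau\Delta})u\rangle_\V\geq 0$.

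For the Lyapunov property, set $w := u_{n+1}-u_n$. Expanding $J(u_n+w)$ using $e^{-\tau\Delta}\mathbf{1}=\mathbf{1}$ and the self-adjointness of $e^{-\tau\Delta}$ gives the exact identity
\[ J(u_{n+1}) - J(u_n) = L_{u_n}(w) - \langle w, e^{-\tau\Delta}w\rangle_\V, \]
and expanding $(\lambda-1)(Q(u_{n+1})-Q(u_n))$ analogously and summing yields
\[ H(u_{n+1}) - H(u_n) = \bigl[F_{u_n}(u_{n+1}) - F_{u_n}(u_n)\bigr] - \langle w, e^{-\tau\Delta}w\rangle_\V. \]
To estimate the bracketed term from below, rewrite \eqref{SDobs} as $e^{-\tau\Delta}u_n = (1-\lambda)u_{n+1} + \tfrac{\lambda}{2}\mathbf{1} - \lambda\beta_{n+1}$ and substitute into a direct expansion of the $F_{u_n}$-difference; after collecting terms one obtains
\[ F_{u_n}(u_n) - F_{u_n}(u_{n+1}) = (1-\lambda)\|w\|_\V^2 - 2\lambda\langle w,\beta_{n+1}\rangle_\V. \]

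Since $\beta_{n+1}\in\mathcal{B}(u_{n+1})$, the subdifferential characterisation (already used in the proof of Theorem~\ref{ACobsweak}) gives $\langle \beta_{n+1}, v - u_{n+1}\rangle_\V\geq 0$ for every $v\in\V_{[0,1]}$; taking $v=u_n$ yields $-\langle w,\beta_{n+1}\rangle_\V \geq 0$. Combining the three displays,
\[ H(u_n) - H(u_{n+1}) = (1-\lambda)\|w\|_\V^2 - 2\lambda\langle w,\beta_{n+1}\rangle_\V + \langle w,e^{-\tau\Delta}w\rangle_\V \geq (1-\lambda)\|w\|_\V^2, \]
which is exactly \eqref{Hstep}. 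For the equality case, if $H(u_n)=H(u_{n+1})$ then each of the three non-negative summands must vanish; in particular $\langle w,e^{-\tau\Delta}w\rangle_\V=0$, and since $e^{-\tau\Delta}$ is positive definite (its eigenvalues $e^{-\tau\mu_k}$ are strictly positive) this forces $w=0$, i.e.\ $u_{n+1}=u_n$; the converse is trivial. The main obstacle I anticipate is the algebraic substitution giving the $F_{u_n}$-difference formula: it is routine but the sign convention on $\beta_{n+1}$ and the non-symmetric roles of $u_n$ and $u_{n+1}$ make it easy to mis-track a factor of $\lambda$ or $2$. A cleaner but less sharp alternative would invoke strong convexity of $F_{u_n}$ (whose quadratic part is $(1-\lambda)\|\cdot\|_\V^2$) with the first-order optimality of $u_{n+1}$ on the convex set $\V_{[0,1]}$; this bypasses the explicit $\beta_{n+1}$ bookkeeping but somewhat obscures the equality characterisation.
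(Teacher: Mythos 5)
Your proof is correct and follows essentially the same route as the paper's: the same decomposition $H = J + (\lambda-1)\ip{u,\mathbf{1}-u}$, the same algebraic identity for $F_{u_n}(u_n)-F_{u_n}(u_{n+1})$ obtained by substituting \eqref{SDobs}, and the same sign argument giving $\ip{u_n-u_{n+1},\beta_{n+1}}\geq 0$ from $\beta_{n+1}\in\mathcal{B}(u_{n+1})$. The one refinement is that you replace the paper's one-sided concavity estimate $J(u_n)-J(u_{n+1})\geq L_{u_n}(u_n-u_{n+1})$ by the exact expansion with defect $\ip{w,e^{-\tau\Delta}w}$ for $w:=u_{n+1}-u_n$, which exhibits the decrease as an exact sum of three non-negative terms and makes the equality case ($w=\mathbf{0}$ from positive definiteness of $e^{-\tau\Delta}$) slightly cleaner than the paper's appeal to strict concavity of $J$.
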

\begin{proof}
	Note that $I-e^{-\tau\Delta}$ has eigenvalues $1- e^{-\tau\lambda_k}\geq 0$, since $\lambda_k$ the eigenvalues of $\Delta$ are non-negative, and so $\left\langle u,\left(I-e^{-\tau\Delta}\right)u\right\rangle_\V
\geq 0$. Since $u\in\V_{[0,1]}$ it follows that $H(u)\geq 0$. 
	
	Next by the concavity of $J$ and linearity of $L_{u_n}$ we have:
	\begin{equation*}\begin{split}
	H(u_{n}) - H(u_{n+1}) &= J(u_{n}) - J(u_{n+1}) + (1-\lambda)\ip{u_{n+1},\mathbf{1}-u_{n+1}} - (1-\lambda)\ip{u_n,\mathbf{1}-u_n} \\
	&\geq L_{u_n}(u_{n} - u_{n+1}) + (1-\lambda)\ip{u_{n+1},\mathbf{1}-u_{n+1}} - (1-\lambda)\ip{u_n,\mathbf{1}-u_n} \:\left(*\right)\\
	&=F_{u_n}(u_{n}) - F_{u_n}(u_{n+1}) \geq 0 \text{ by \eqref{ACobsMM}}
	\end{split}
	\end{equation*}
	with equality if and only if $u_{n+1}=u_n$ as $J$ is strictly convex. Finally, we can continue
	\begin{equation*}\begin{split}
	\left(*\right) &= \ip{u_{n} - u_{n+1},\mathbf{1}-2e^{-\tau\Delta}u_n} + (1-\lambda)\ip{u_{n+1},\mathbf{1}-u_{n+1}} - (1-\lambda)\ip{u_n,\mathbf{1}-u_n} \\
	&= \ip{u_{n} - u_{n+1},\mathbf{1}-2e^{-\tau\Delta}u_n} + (1-\lambda)(\ip{u_{n+1}-u_n,\mathbf{1}} +\ip{u_n,u_n} -\ip{u_{n+1},u_{n+1}})\\
	&= \ip{u_{n} - u_{n+1},\lambda\mathbf{1}-2e^{-\tau\Delta}u_n + (1-\lambda)u_{n+1} + (1-\lambda)u_n}\\
	&= \ip{u_{n} - u_{n+1},2\lambda\beta_{n+1}+ (1-\lambda)(u_n-u_{n+1})} \text{ by (\ref{SDobs}b), recall }\beta_{n+1}\in\mathcal{B}(u_{n+1})\\
	&\geq (1-\lambda)\left|\left|u_{n+1}-u_n\right|\right|^2_\V
	\end{split}
	\end{equation*}
where the final line follows from \eqref{beta} as in the proof of Theorem \ref{obsMMthm}.
\end{proof}
\begin{cor}[Cf. $\text{\cite[Lemma 4]{LB2016}}$]\label{cor1}
	For $0\leq\lambda \leq 1$, we have that for the sequence $u_n$ given by \eqref{SDobs}  \[\sum_{n=0}^\infty \left|\left|u_{n+1}-u_n\right|\right|^2_\V < \infty \] and therefore in particular \[\lim_{n\rightarrow\infty} \left|\left|u_{n+1}-u_n\right|\right|_\V = 0. \]
\end{cor}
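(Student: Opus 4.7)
The plan is to prove the sum is finite by telescoping the Lyapunov inequality \eqref{Hstep}, handling $\lambda=1$ as a separate (easier) case. The limit statement then follows immediately from convergence of the series.

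For $0 \leq \lambda < 1$: I would sum \eqref{Hstep} over $n = 0, 1, \ldots, N-1$ to obtain
\[ (1-\lambda)\sum_{n=0}^{N-1} \|u_{n+1} - u_n\|_\V^2 \leq H(u_0) - H(u_N). \]
By Theorem \ref{Lyapthm}, $H \geq 0$ on $\V_{[0,1]}$, so $H(u_N) \geq 0$ and the right-hand side is bounded above by $H(u_0)$, which is a finite constant independent of $N$. Letting $N \to \infty$ gives
\[ \sum_{n=0}^\infty \|u_{n+1} - u_n\|_\V^2 \leq \frac{H(u_0)}{1-\lambda} < \infty. \]

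For $\lambda = 1$: Here \eqref{Hstep} degenerates to $H(u_n) \geq H(u_{n+1})$ with no quantitative control, so a separate argument is needed. This is what I expect to be the main subtlety. However, by Theorem~\ref{obsMMthm}, at $\lambda = 1$ the semi-discrete scheme coincides with the MBO scheme, so $u_n \in \V_{\{0,1\}}$ for every $n \geq 1$ (with any admissible tie-breaking rule). Since $V$ is finite, $\V_{\{0,1\}}$ is finite, hence the sequence $(u_n)_{n\geq 1}$ takes values in a finite set. Theorem~\ref{Lyapthm} ensures $H$ is strictly decreasing along the sequence unless $u_{n+1} = u_n$; combined with finiteness this forces the sequence to become eventually constant, so $\|u_{n+1} - u_n\|_\V = 0$ for all sufficiently large $n$ and the series has only finitely many nonzero terms.

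In either case, the series $\sum_n \|u_{n+1} - u_n\|_\V^2$ converges, from which $\|u_{n+1} - u_n\|_\V \to 0$ follows as the standard necessary condition for convergence. The whole proof amounts to a telescoping sum together with the non-negativity of $H$, so there is no delicate analysis required beyond recognising that $\lambda = 1$ must be treated via the combinatorial finiteness of $\V_{\{0,1\}}$ rather than via the Lyapunov decrement itself.
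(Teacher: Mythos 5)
Your proof is correct and takes essentially the same route as the paper: for $0\leq\lambda<1$ it telescopes \eqref{Hstep} against $H\geq 0$, and for $\lambda=1$ it reduces to eventual constancy of the MBO trajectory. The only cosmetic difference is that the paper simply cites \cite[Proposition 4.6]{vGGOB} for that eventual constancy, whereas you rederive it from the finiteness of $\V_{\{0,1\}}$ together with the strict-decrease clause of Theorem \ref{Lyapthm}.
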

\begin{proof}
	If $\lambda=1$ the result follows directly from Theorem \ref{obsMMthm} since MBO trajectories are eventually constant \cite[Proposition 4.6]{vGGOB}. If $0\leq\lambda<1$ then by $H\geq 0$ and \eqref{Hstep} we have 
	\[(1-\lambda)\sum_{n=0}^N \left|\left|u_{n+1}-u_n\right|\right|^2_\V \leq H(u_0) -H(u_{N+1}) \leq H(u_0)\] so the result follows by taking $N\rightarrow\infty$.
\end{proof}
\begin{prop}
The Lyapunov functional has Hilbert space gradient \emph{(}for $u\in\V_{(0,1)}$\emph{)}
\be\label{Hgrad}
\nabla_\V H(u) = \lambda\mathbf{1} - 2e^{-\tau\Delta}u +2(1-\lambda)u.
\ee
%and therefore\emph{:} 
\begin{enumerate}[\em i.]\item For the sequence $u_n\in\V_{(0,1)}$ given by \eqref{SDobs}
\be\label{Hgrad2}
\nabla_\V H(u_n) = 2\lambda\beta_{n+1} +2(1-\lambda)(u_n-u_{n+1}). 
\ee
\item Let $\mathscr{E}$ denote the eigenspace of $\Delta$ with eigenvalue $-\tau^{-1}\log(1-\lambda)$ \emph{(}i.e. the eigenspace of $e^{-\tau\Delta}$ with eigenvalue $1-\lambda$\emph{)} or $\{\mathbf{0}\}$ if there is no such eigenvalue. Then, if $u\in\V_{(0,1)}$, it follows that $\nabla_\V H(u) =\mathbf{0}$ if and only if  $u\in \left(\frac{1}{2}\mathbf{1} + \mathscr{E}\right)\cap\V_{(0,1)}$. 
\end{enumerate}
\end{prop}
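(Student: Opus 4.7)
The plan is to proceed by direct computation for the gradient formula, then substitute to get (i), and finally reduce the critical-point equation in (ii) to an eigenvalue problem. All three parts are essentially computational, and the main work is just careful bookkeeping of the $\langle\cdot,\cdot\rangle_\V$ inner product.

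For the gradient formula \eqref{Hgrad}, I would use the rightmost form of $H$ in \eqref{Lyap}, namely
\[ H(u) = \lambda\ip{u,\mathbf{1}} - \lambda\ip{u,u} + \left\langle u,(I-e^{-\tau\Delta})u\right\rangle_\V. \]
The Hilbert space gradient is defined so that $\left\langle\nabla_\V H(u),v\right\rangle_\V$ equals the directional derivative $\frac{d}{ds}H(u+sv)|_{s=0}$ for all $v\in\V$. The linear term contributes $\lambda\mathbf{1}$, the term $-\lambda\ip{u,u}$ contributes $-2\lambda u$, and since $I-e^{-\tau\Delta}$ is self-adjoint on $(\V,\ip{\cdot,\cdot})$ the final quadratic form contributes $2(I-e^{-\tau\Delta})u$. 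Summing yields \eqref{Hgrad}. Since $H$ has the same formula on all of $\V$, the restriction $u\in\V_{(0,1)}$ is only used to guarantee the gradient computation is pointwise interior.

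For (i), I would simply substitute the semi-discrete scheme \eqref{SDobs} into \eqref{Hgrad} at $u=u_n$. Rearranging \eqref{SDobs} gives $e^{-\tau\Delta}u_n = (1-\lambda)u_{n+1} + \tfrac{\lambda}{2}\mathbf{1} - \lambda\beta_{n+1}$. Inserting this into \eqref{Hgrad} and cancelling the $\lambda\mathbf{1}$ terms yields \eqref{Hgrad2} after collecting the $(1-\lambda)u_n$ and $-(1-\lambda)u_{n+1}$ contributions.

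For (ii), I would change variables $w := u - \tfrac{1}{2}\mathbf{1}$ so that the condition $u\in\V_{(0,1)}$ corresponds to $w\in\V_{(-1/2,1/2)}$. Since $\Delta\mathbf{1}=\mathbf{0}$ we have $e^{-\tau\Delta}\mathbf{1}=\mathbf{1}$, and substituting into \eqref{Hgrad} all the constant-in-$w$ terms cancel, reducing $\nabla_\V H(u)=\mathbf{0}$ to the linear eigenvalue equation
\[ \bigl(e^{-\tau\Delta} - (1-\lambda)I\bigr)w = \mathbf{0}. \]
Because $e^{-\tau\Delta}$ has eigenvalues $e^{-\tau\mu_k}$ for $\mu_k$ the eigenvalues of $\Delta$, $w$ solves this equation if and only if $w\in\mathscr{E}$, where $\mathscr{E}$ is the eigenspace of $\Delta$ with eigenvalue $\mu = -\tau^{-1}\log(1-\lambda)$ (with the convention $\mathscr{E}=\{\mathbf{0}\}$ when no such eigenvalue exists; note also the edge case $\lambda=1$ forces $w=\mathbf{0}$ since $e^{-\tau\Delta}$ has no zero eigenvalue). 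Translating back gives $u\in\bigl(\tfrac{1}{2}\mathbf{1}+\mathscr{E}\bigr)\cap\V_{(0,1)}$, as claimed. There is no significant obstacle in this argument; the only care needed is to note that the change of variables is exact because $\mathbf{1}$ is always an eigenvector of $e^{-\tau\Delta}$, which makes all the $\mathbf{1}$-dependent terms in $\nabla_\V H$ cancel cleanly.
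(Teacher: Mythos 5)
Your proposal is correct and follows essentially the same route as the paper: a direct computation of the Hilbert space gradient (the paper differentiates the $J(u)+(\lambda-1)\ip{u,\mathbf{1}-u}$ form rather than the expanded form, but the calculation is identical), substitution of \eqref{SDobs} for part (i), and for part (ii) the reduction to the kernel of $e^{-\tau\Delta}-(1-\lambda)I$, which the paper phrases via $\frac{1}{2}A\mathbf{1}=\lambda\mathbf{1}$ rather than your explicit change of variables $w=u-\frac{1}{2}\mathbf{1}$. These are only cosmetic differences, and your extra remark on the $\lambda=1$ edge case is a harmless bonus.
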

\begin{proof}
It is straightforward to check that \[ \ip{\nabla_\V H(u),v}:= \lim_{t\rightarrow0}\frac{ H(u+tv) - H(u)}{t}   = \ip{\mathbf{1}-2e^{-\tau\Delta}u,v}+(\lambda-1)\ip{\mathbf{1}-2u,v} \] and therefore \[\nabla_\V H(u) =\mathbf{1}-2e^{-\tau\Delta}u + (\lambda-1)(\mathbf{1}-2u)= \lambda\mathbf{1} - 2e^{-\tau\Delta}u +2(1-\lambda)u.\]
\begin{enumerate}[i.]\item By \eqref{SDobs}, $ \lambda \mathbf{1}- 2e^{-\tau\Delta}u_n =  2\lambda\beta_{n+1}-2(1-\lambda)u_{n+1}$, and substituting into \eqref{Hgrad} gives \eqref{Hgrad2}.
\item Let $A := 2e^{-\tau\Delta} +2(\lambda-1)I$, and note that $\mathscr{E}= \operatorname{ker} A$ by definition. Then by \eqref{Hgrad}, $u\in\V_{(0,1)}$ satisfies $\nabla_\V H(u) =\mathbf{0}$ if and only if $Au=\lambda\mathbf{1}$. Note that $\frac{1}{2}A\mathbf{1} =\mathbf{1} +\lambda\mathbf{1}-\mathbf{1}=\lambda\mathbf{1}$. Therefore $Au=\lambda\mathbf{1}$ if and only if $u\in \frac{1}{2}\mathbf{1} +\mathscr{E}$. %Therefore $u\in\V_{(0,1)}$ satisfies $\nabla_\V H(u) =\mathbf{0}$ if and only if $u\in \left(\frac{1}{2}\mathbf{1} + \mathscr{E}\right)\cap\V_{(0,1)}$. 
\end{enumerate}
\end{proof}
\begin{nb}
Considering the quadratic terms one can observe that \[ H\left(\frac{1}{2}\mathbf{1}+\eta\right) = H\left(\frac{1}{2}\mathbf{1}\right) - \left(\ip{\eta,e^{-\tau\Delta}\eta}-(1-\lambda)\ip{\eta,\eta}\right) \] so, for $0\leq\lambda\leq 1$, $u= \frac{1}{2}\mathbf{1}$ is a global maximiser of $H$ if and only if \[P := e^{-\tau\Delta} -(1-\lambda)I\] is positive semi-definite. For $\lambda_k$ the eigenvalues of $\Delta$, we desire $P$ have eigenvalues{:} 
\begin{align*}& e^{-\tau\lambda_k} -(1-\lambda)\geq 0 &\text{i.e.} && \tau \varepsilon^{-1} \geq 1 - e^{-\tau\lambda_k}  .\end{align*} Therefore we have $\lambda\leq 1$ and $u= \frac{1}{2}\mathbf{1}$ as a global maximiser of $H$ if and only if 
\[\varepsilon\in \left[\tau,\frac{\tau}{1-e^{-\tau||\Delta||}}\right].\]
Furthermore, for $\xi\in\mathscr{E}$ we have $e^{-\tau\Delta}\xi = (1-\lambda)\xi$, and so \[ H\left(\frac{1}{2}\mathbf{1}+\xi\right) = H\left(\frac{1}{2}\mathbf{1}\right) - \left(\ip{\xi,e^{-\tau\Delta}\xi}-(1-\lambda)\ip{\xi,\xi}\right) =  H\left(\frac{1}{2}\mathbf{1}\right). \] Therefore $\left(\frac{1}{2}\mathbf{1} + \mathscr{E}\right)\cap\V_{(0,1)}$ are all global maxima in this case.
\end{nb}

We summarise the payoff from the above discussion. As $H(u_n)$ is monotonically decreasing and bounded below, we have $H(u_n)\downarrow H_\infty$ for some $H_\infty\geq 0$. Furthermore, since the $u_n\in\V_{[0,1]}$, which is compact, there exist subsequences $u_{n_k}\rightarrow u^*\in\V_{[0,1]}$ with $H(u^*)=H_\infty$. Unfortunately, as in \cite{LB2016} for the AC flow with the standard quartic potential, these facts are insufficient for convergence of the $u_n$ as $n\rightarrow\infty$. However, by the same argument as \cite[Lemma 5]{LB2016}, if the $u_n$ have finitely many accumulation points, then the full sequence $u_n$ converges. Notably, if $u^*\in\V_{(0,1)}$ is an accumulation point of $u_n$ then by Corollary \ref{cor1} and \eqref{Hgrad2} we have $\nabla_\V H(u^*) = \mathbf{0}$, and so $H(u^*) =H(\frac{1}{2}\mathbf{1})$. So if $H(u_0) < H(\frac{1}{2}\mathbf{1})$, then no accumulation points of $u_n$ lie in $\V_{(0,1)}$.

\begin{nb}
We can suggestively generalise the Lyapunov functional to all of $\V$ by defining \[ 
%\begin{split}
	H(u) := \left\langle u,u-e^{-\tau\Delta}u\right\rangle_\V + 2\lambda\ip{W\circ u,\mathbf{1}}\geq 0			  
%\end{split}
\]
which we can rewrite as: \[ H(u)= \tau\ip{u,\Delta u} + 2\lambda\ip{W\circ u,\mathbf{1}} - \tau^2\ip{u,Q_\tau u}\]
where $Q_\tau$ satisfies $e^{-\tau\Delta} = I -\tau\Delta + \tau^2 Q_\tau$ and therefore \begin{equation}\label{SDLyap}
 	\frac{1}{2\tau}H(u) = \GL(u) - \frac{1}{2}\tau\ip{u,Q_\tau u}.
 \end{equation}
Note that $Q_\tau$ has eigenvalues $\tau^{-2}(e^{-\tau\lambda_k}-1+\tau\lambda_k)\leq\frac{1}{2}\lambda_k^2$, for $\lambda_k$ the eigenvalues of $\Delta$, and so $\ip{u,Q_\tau u}\leq \frac{1}{2}||\Delta||^2||u||_\V^2$  is bounded in $\V_{[0,1]}$ uniformly in $\tau$. Hence for $u\in\V_{[0,1]}$
\[
 	\frac{1}{2\tau}H(u) = \GL(u) +\bigO(\tau).
\]
\end{nb}

\subsection{The semi-discrete scheme and a time-splitting of the AC flow}
Theorem \ref{obsMMthm} shows that the MBO scheme is the AC flow approximated via our semi-discrete scheme \eqref{SDobs}. The semi-discrete scheme can also be related to the following two-step time-splitting of the AC flow.\footnote{The link between the MBO scheme and this time-splitting was also noted in the continuum case in \cite{ET}.} We fix $\tau>0$ and take $\tilde u_0\in\V_{[0,1]}$, then iteratively apply the steps:

\begin{enumerate}
\item Take $\tilde  u_n$ from the previous iteration. 
\item\textbf{(Diffusion step)} Define $v:=e^{-t\Delta}\tilde u_n$ the heat equation solution with $v(0) =\tilde u_n$ and define $v_n := v(\tau)$.

\item\textbf{(Reaction step)} Define $U_{n}\in  H^1((0,\tau);\V)\cap C^0([0,\tau];\V)\cap\V_{[0,1],t\in [0,\tau]}$ obeying
\be
\label{thresh}
\frac{d(U_n)_i}{dt} = \varepsilon^{-1}\left((U_n(t))_i - \frac{1}{2}\right) + \varepsilon^{-1}\beta_i(t), \: \: \beta\in\mathcal{B}(U_n), \: \: U_{n}(0) = v_n =e^{-\tau\Delta}\tilde u_n.
\ee
%where $\beta\in\mathcal{B}(U_n)$ as defined in \eqref{beta}. 
\item Finally, define $\tilde u_{n+1}:=U_n(\tau)$ and define $U(t):=U_n(t-n\tau)$ for $t\in (n\tau,(n+1)\tau]$. 
\end{enumerate}
The relation of this time-splitting to the semi-discrete scheme is that if $ u_n = \tilde u_n$ we can recognise from \eqref{SDobs0} the semi-discrete update $u_{n+1}\approx \tilde u_{n+1}$ as the implicit Euler approximation of \eqref{thresh}:
\[
\label{IE}
\frac{(u_{n+1})_i-(v_n)_i}{\tau} \in - \frac{1}{\varepsilon} \partial W((u_{n+1})_i).
\]
That is, we get the semi-discrete update by dissecting the flow in \eqref{ACobs2} into a diffusion for time $\tau$ followed by a gradient flow of $W$, again for time $\tau$. Then, we use the exact solution for the former and approximate the latter by an implicit Euler scheme. 

We next prove that minimisers of $W$ are stationary for \eqref{thresh}, a gradient flow of $W$.
\begin{prop}
Let $x:[0,T]\rightarrow[0,1]$ with $x\in H^1((0,T);\mathbb{R})\cap C^0([0,T];\mathbb{R})$ solve \[\varepsilon\frac{dx}{dt} =x(t) -\frac{1}{2} + \beta(t) \] with $x(0) \in \{0,1\}$ and $\beta(t) \in -\partial I_{[0,1]}(x(t))$. Then $x(t) = x(0)$ for all $t\in[0,T]$.
\end{prop}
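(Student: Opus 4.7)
The plan is to show that $x \equiv x(0)$ by an energy argument, exploiting the sign structure of the subdifferential.

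First I would reduce to the case $x(0) = 0$. If $x(0) = 1$, setting $y := 1 - x$ and $\gamma := -\beta$, a direct check shows $\varepsilon \dot y = y - 1/2 + \gamma$ with $\gamma(t) \in -\partial I_{[0,1]}(y(t))$ (the subdifferential flips under the reflection $z \mapsto 1 - z$), $y(0) = 0$, and the same $H^1 \cap C^0$ regularity, so the claim for $x(0) = 0$ immediately gives the claim for $x(0) = 1$.

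Next I would record the key sign observation: $x(t)\beta(t) \leq 0$ for a.e.\ $t$. This follows directly from $x(t) \in [0,1]$ and $\beta(t) \in -\partial I_{[0,1]}(x(t))$, splitting into the three cases $x = 0$ (so $x\beta = 0$), $x \in (0,1)$ (so $\beta = 0$), and $x = 1$ (so $\beta \leq 0$ and hence $x\beta \leq 0$). This is the only place where the sign information from the obstacle is used.

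Now I would run the energy estimate on $v(t) := \tfrac{1}{2} x(t)^2$. Since $x \in H^1 \cap C^0$ is (by Lemma~\ref{H1AClem}) locally absolutely continuous with classical derivative equal to the weak derivative a.e., the same is true of $v$, with $\dot v = x \dot x$ a.e. Substituting the ODE gives
\begin{equation*}
\varepsilon \dot v(t) \;=\; x(t)^2 - \tfrac{1}{2} x(t) + x(t)\beta(t) \;\leq\; x(t)\bigl(x(t) - \tfrac{1}{2}\bigr)
\end{equation*}
a.e., by the sign observation above. In the regime $x(t) \in [0,\tfrac{1}{2}]$ the right-hand side is $\leq 0$, so $\dot v \leq 0$ a.e.\ there.

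Finally I would bootstrap using continuity. Define $\tau^* := \sup\{\, t \in [0,T] : x(s) \in [0,\tfrac{1}{2}] \text{ for all } s \in [0,t] \,\}$. Since $x(0) = 0$ and $x$ is continuous, $\tau^* > 0$. On $[0,\tau^*)$ we have $\dot v \leq 0$ a.e., so $v$ is non-increasing; since $v(0) = 0$ and $v \geq 0$, it follows that $v \equiv 0$, i.e.\ $x \equiv 0$, on $[0,\tau^*)$. By continuity $x(\tau^*) = 0$, so if $\tau^* < T$ then for some $\delta > 0$ we still have $x \in [0,\tfrac{1}{2}]$ on $[0, \tau^* + \delta]$, contradicting the definition of $\tau^*$. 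Hence $\tau^* = T$ and $x \equiv 0$ on $[0,T]$. The main (very mild) obstacle is simply the care needed to handle the differential inequality on the closed interval via this continuity-bootstrap, since we cannot a priori exclude $x$ from leaving $[0,\tfrac{1}{2}]$ without first confining it there.
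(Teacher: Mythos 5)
Your proposal is correct and follows essentially the same route as the paper's proof: reduction to $x(0)=0$ by symmetry, the sign observation $x(t)\beta(t)\leq 0$, the energy estimate on $x^2$ showing $\varepsilon\tfrac{d}{dt}(x^2/2)\leq x(x-\tfrac12)\leq 0$ while $x\in[0,\tfrac12]$, and a continuity bootstrap to extend to all of $[0,T]$. The only cosmetic difference is that the paper defines $T_0$ as the first time $x$ hits $\tfrac12$ rather than your supremum $\tau^*$, which amounts to the same argument.
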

\begin{proof} By symmetry, it suffices to consider $x(0) = 0$. Let $T_0$ equal the first time when $x(t) = 1/2$, or let $T_0 = T$ if there is no such time. Then for $t\in[0,T_0]$ we have $0\leq x(t)\leq 1/2$, and note that for any $t\in[0,T]$, $x(t)\beta(t)\leq 0$. Hence, for all $t\in [0,T_0]$, \[\frac{1}{2}\varepsilon\frac{d}{dt}\left(x(t)^2\right) = \varepsilon x(t) \frac{dx}{dt} = x(t)\left(x(t) -\frac{1}{2}\right) + \beta(t) x(t)  \leq 0 \] and thus $x(t)^2\leq x(0)^2 = 0$. Therefore for all $t\in [0,T_0]$, $x(t) = 0$. Since $x(T_0)\neq 1/2$, we must have $T_0=T$, completing the proof.
\end{proof} 

We can therefore solve \eqref{thresh}. Let $T_i$ denote the first time $(U_n(t))_i\in\{0,1\}$. Then for $t<T_i$, $\beta_i(t) = 0$, so by separation of variables we have
\[(U_n(t))_i -\frac{1}{2} =\left((v_n)_i -\frac{1}{2}\right)e^{t/\varepsilon}\] and for $t\geq T_i$, $(U_n(t))_i = (U_n(T_i))_i$ by the above proposition. Thus
\begin{equation*}
	\label{threshsoln}
	(\tilde u_{n+1})_i = \begin{cases}
\frac{1}{2}+e^{\lambda}\left((v_n)_i -\frac{1}{2}\right), &\text{if }e^{\lambda}\left|(v_n)_i -\frac{1}{2}\right|<\frac{1}{2}\\
\Theta\left((v_n)_i -\frac{1}{2}\right) , &\text{otherwise}
 \end{cases}
\end{equation*}
where $\Theta$ is the Heaviside step function. We compare to the semi-discrete update $u_{n+1}$. If $u_n=\tilde u_n$ then by \eqref{ACobssoln} the semi-discrete update obeys: 
\[	(u_{n+1})_i = \begin{cases}
 \frac{1}{2}+ \frac{1}{1-\lambda}\left((v_n)_i -\frac{1}{2}\right), &\text{ if } \lambda < 1 \text{ and } \frac{1}{1-\lambda}\left|(v_n)_i -\frac{1}{2}\right|<\frac{1}{2},\\
\Theta\left((v_n)_i -\frac{1}{2}\right), &\text{ otherwise}.
 \end{cases}\\
\]
For $0\leq\lambda\leq 1$, $e^{\lambda}\leq(1-\lambda)^{-1}$. Thus $|(\tilde u_{n+1})_i-1/2|\leq|(u_{n+1})_i -1/2|$, so after one step the semi-discrete update is always at least as close to the wells of $W$ as the time-splitting update. 

We compare the time-splitting flow to the AC flow. Note the simplest case, when $u$ remains in $\V_{(0,1)}$ so $\beta$ is constantly $ \mathbf{0}$. Then substituting into \eqref{ACobssoln2}:
\begin{equation}
\label{ACnoobssoln}
u(t) =\frac{1}{2}\mathbf{1} +  e^{t/\varepsilon}\left(e^{-t\Delta}u(0)-\frac{1}{2}\mathbf{1}\right).
\end{equation}
 Note that in this case we also have for $t\in [0,\tau]$ and with $\tilde u_0=u(0)$,
\[U_0(t)= \frac{1}{2}\mathbf{1} +e^{t/\varepsilon}\left(e^{-\tau\Delta}u(0) -\frac{1}{2}\mathbf{1}\right)\]
 so $\tilde u_1 = u(\tau)$, and so by induction $\tilde u_n=u(n\tau)$, i.e. the time-splitting scheme agrees with the AC flow after each time-step of $\tau$. Note also that by {\cite[Lemma 2.6(d)]{vGGOB}}, for $t\in (n\tau,(n+1)\tau]$, we have $||e^{-(t-n\tau)\Delta}u(n\tau)-\frac{1}{2}\mathbf{1}||_\infty \geq ||e^{-\tau\Delta}u(n\tau)-\frac{1}{2}\mathbf{1}||_\infty $, and so
\begin{equation}\label{TSineq}
	\left|\left|u(t)-\frac{1}{2}\mathbf{1}\right|\right|_\infty \geq \:\;\left|\left|U(t)-\frac{1}{2}\mathbf{1}\right|\right|_\infty
\end{equation} with strict inequality when $u(n\tau)$ is not a constant vector and $t\in(n\tau,(n+1)\tau)$, so the AC flow is always at least as close to the wells of $W$ as the time-splitting flow. 
However in general $u(t)$ will eventually have a vertex take value $0$ or $1$, and indeed if $\ip{u(0),\mathbf{1}}\neq \frac{1}{2}\ip{\mathbf{1},\mathbf{1}}$ then \eqref{ACnoobssoln} yields an upper bound on how long until this occurs.\footnote{By \cite[Lemma 2.6(c)]{vGGOB}, for all $i\in V$ $(e^{-t\Delta}u(0))_i -\frac{1}{2}\rightarrow \ip{u(0)-\frac{1}{2}\mathbf{1},\mathbf{1}}\ip{\mathbf{1},\mathbf{1}}^{-1}=:\alpha$ as $t\rightarrow\infty$. From that lemma, we have an explicit $t_0$ such that for all $t> t_0$, $|(e^{-t\Delta}u(0))_i -\frac{1}{2}| >\frac{1}{2}|\alpha  |$. Then by \eqref{ACnoobssoln}, for $t>\max\{t_0, -\varepsilon\log| \alpha  |\}$ we have $u_i(t)\notin(0,1)$. 
} 
Furthermore, and importantly, by \eqref{TSineq} it will do so before this happens for $U(t)$.
\section{Convergence of the semi-discrete scheme to the AC flow}\label{SDconvsec}

We now consider the behaviour of the semi-discrete solution as $\tau,\lambda\downarrow 0$ ($\varepsilon$ fixed). Drawing inspiration from the form of the AC solution in \eqref{ACobssoln2}, we show that the semi-discrete solutions converge pointwise (in a sense we make explicit) to the AC solution. 

In summary, the proof runs by finding the $n^\text{th}$ term for the semi-discrete scheme, and noticing that this asymptotically resembles a Riemann sum for the integral form of the AC solution in \eqref{ACobssoln2}. We make the convergence of this sum precise via the Banach--Alaoglu theorem, and then use the Banach--Saks theorem to extract a valid $\beta$ term from the sequence of semi-discrete $\beta_n$. 
\subsection{Asymptotics of the semi-discrete iterates}
Recall that the semi-discrete scheme \eqref{SDobs} is defined as \[(1-\lambda)u_{n+1} =e^{-\tau\Delta}u_n-\frac{\lambda}{2}\mathbf{1} +\lambda\beta_{n+1}
\] and note that $\beta_{n+1}\in \V_{[-1/2,1/2]}$ by \eqref{betasoln}. 
Iterating, we get the $n^\text{th}$ term.
\begin{prop}For $\lambda :=\tau/\varepsilon \in [0,1)$, the semi-discrete solution has $n^\text{th}$ iterate \be u_n = \frac{1}{2}\mathbf{1}+(1-\lambda)^{-n}e^{-n\tau\Delta}\left(u_0-\frac{1}{2}\mathbf{1}\right) +\frac{\lambda}{1-\lambda}\sum_{k=1}^n (1-\lambda)^{-(n-k)}e^{-(n-k)\tau\Delta}\beta_k . \ee
Understanding $\bigO$ to refer to the limit of $\tau\downarrow 0$ and $n\rightarrow \infty$ with $n\tau-t\in [0,\tau)$ for some fixed $t\geq 0$ and for fixed $\varepsilon>0$, we therefore have
\begin{equation}\label{SDiterate}
	u_n = \frac{1}{2}\mathbf{1}+e^{n\lambda} e^{-n\tau\Delta}\left(u_0-\frac{1}{2}\mathbf{1}\right) +\lambda\sum_{k=1}^n e^{(n-k)\lambda}e^{-(n-k)\tau\Delta}\beta_k +\bigO(\tau).
\end{equation}

\end{prop}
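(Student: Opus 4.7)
The plan is to reduce the recursion to a homogeneous linear one via a shift, iterate explicitly, and then use a Taylor expansion of $-\log(1-\lambda)$ to pass to the asymptotic form.

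First, I would set $v_n := u_n - \tfrac{1}{2}\mathbf{1}$. Since $\Delta\mathbf{1} = \mathbf{0}$ and hence $e^{-\tau\Delta}\mathbf{1}=\mathbf{1}$, the $\tfrac{1}{2}\mathbf{1}$ terms cancel on both sides and the defining recursion \eqref{SDobs} collapses to
\[
(1-\lambda)v_{n+1} = e^{-\tau\Delta}v_n + \lambda\beta_{n+1}.
\]
Because $\lambda \in [0,1)$, I can divide by $1-\lambda$ and iterate by induction on $n$; the powers of $e^{-\tau\Delta}$ combine cleanly as the operator commutes with itself, yielding
\[
v_n = (1-\lambda)^{-n}e^{-n\tau\Delta}v_0 + \frac{\lambda}{1-\lambda}\sum_{k=1}^{n}(1-\lambda)^{-(n-k)}e^{-(n-k)\tau\Delta}\beta_k.
\]
Re-substituting $u_n = v_n + \tfrac{1}{2}\mathbf{1}$ then gives the first identity of the Proposition.

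For the asymptotic statement I would expand
\[
-\log(1-\lambda) = \lambda + \tfrac{\lambda^2}{2} + \tfrac{\lambda^3}{3} + \cdots
\]
to deduce that, uniformly in $0 \leq m \leq n$,
\[
(1-\lambda)^{-m} = e^{m\lambda}\exp\!\Bigl(m\bigl(\tfrac{\lambda^2}{2} + \tfrac{\lambda^3}{3} + \cdots\bigr)\Bigr) = e^{m\lambda}\bigl(1 + \bigO(\tau)\bigr),
\]
since under the stipulated scaling $n\tau\to t$ with $\varepsilon$ fixed we have $m\lambda^2 \leq n\lambda^2 = (n\tau/\varepsilon)(\tau/\varepsilon) = \bigO(\tau)$. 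Combined with $\lambda/(1-\lambda) = \lambda + \bigO(\tau^2)$, these expansions rewrite the closed form as the right-hand side of \eqref{SDiterate} plus an error of the form $\bigO(\tau)$ times the Riemann-sum-like quantity $\lambda\sum_{k=1}^{n}e^{(n-k)\lambda}e^{-(n-k)\tau\Delta}\beta_k$.

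The final step, and the only real computational wrinkle, is to check that this residual sum stays bounded in $\V$-norm so that the error remains $\bigO(\tau)$. Using the uniform bound $\beta_k \in \V_{[-1/2,1/2]}$ from Theorem~\ref{obsMMthm} together with $\|e^{-t\Delta}\| \leq 1$ for $t\geq 0$ (since $\Delta$ is positive semi-definite and $e^{-t\Delta}$ self-adjoint), the $\V$-norm of the sum is controlled by a constant multiple of
\[
\lambda \sum_{m=0}^{n-1}e^{m\lambda} = \lambda \cdot \frac{e^{n\lambda}-1}{e^{\lambda}-1},
\]
which is uniformly bounded on $n\lambda \leq (t+\tau)/\varepsilon$. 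The conceptual move driving the whole argument is the initial shift $v_n = u_n - \tfrac{1}{2}\mathbf{1}$ that renders the recursion homogeneous; everything after is standard manipulation of the resulting geometric sum.
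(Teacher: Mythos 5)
Your proposal is correct and follows essentially the same route as the paper: an inductive derivation of the closed form (the paper inducts on $u_n$ directly rather than on the shifted $v_n = u_n - \tfrac{1}{2}\mathbf{1}$, which is cosmetic), followed by a term-by-term comparison of the scalar prefactors controlled by a geometric sum and the uniform bound $\beta_k\in\V_{[-1/2,1/2]}$. The only difference is in how the prefactor error is organized: the paper uses the sign inequality $(1-\lambda)^{-(r+1)}\ge e^{r\lambda}$ to pull the differences through the triangle inequality and then evaluates both geometric sums exactly, whereas you use the uniform multiplicative estimate $(1-\lambda)^{-m}=e^{m\lambda}\bigl(1+\bigO(\tau)\bigr)$ from the logarithm expansion together with boundedness of the residual sum --- both are valid and yield the same $\bigO(\tau)$ conclusion.
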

\begin{proof}
	We proceed by induction. The $n = 0$ base case is trivial. Then inducting: 
\vspace{-0.1em}\begin{equation*}\begin{split}&u_{n+1} = (1-\lambda)^{-1}e^{-\tau\Delta}u_n -\frac{1}{2}\frac{\lambda}{1-\lambda}\mathbf{1} +\frac{\lambda}{1-\lambda}\beta_{n+1}\\
								   &=(1-\lambda)^{-1}e^{-\tau\Delta}\left[\frac{1}{2}\mathbf{1}+(1-\lambda)^{-n}e^{-n\tau\Delta}\left(u_0-\frac{1}{2}\mathbf{1}\right) +\frac{\lambda}{1-\lambda}\sum_{k=1}^n (1-\lambda)^{-(n-k)}e^{-(n-k)\tau\Delta}\beta_k\right]\\
								   &\hspace{1em } -\frac{1}{2}\frac{\lambda}{1-\lambda}\mathbf{1} +\frac{\lambda}{1-\lambda}\beta_{n+1}\\
								   &=\frac{1}{2}\left(\frac{1}{1-\lambda}-\frac{\lambda}{1-\lambda}\right)\mathbf{1}+(1-\lambda)^{-(n+1)}e^{-(n+1)\tau\Delta}\left(u_0-\frac{1}{2}\mathbf{1}\right)\\
								   &\hspace{1em } + \frac{\lambda}{1-\lambda}\sum_{k=1}^n (1-\lambda)^{-(n-k+1)}e^{-(n-k+1)\tau\Delta}\beta_k +\frac{\lambda}{1-\lambda}\beta_{n+1}\\
								   &=\frac{1}{2}\mathbf{1}+(1-\lambda)^{-(n+1)}e^{-(n+1)\tau\Delta}\left(u_0-\frac{1}{2}\mathbf{1}\right)+\frac{\lambda}{1-\lambda}\sum_{k=1}^{n+1} (1-\lambda)^{-(n-k+1)}e^{-(n-k+1)\tau\Delta}\beta_k 
									 \end{split}\end{equation*}
	completing the induction. 

Next, consider (with $n\tau=t+\bigO(\tau)$ and $n\lambda=t/\varepsilon+\bigO(\tau)$ for fixed $t$) the difference:
\begin{equation*}\begin{split}&\left|\left|u_n - \frac{1}{2}\mathbf{1} - e^{n\lambda} e^{-n\tau\Delta}\left(u_0-\frac{1}{2}\mathbf{1}\right)-\lambda\sum_{k=1}^n e^{(n-k)\lambda}e^{-(n-k)\tau\Delta}\beta_k\right|\right|_\V \\
	&= \left|\left|\left((1-\lambda)^{-n}-e^{n\lambda}\right)\left(e^{-n\tau\Delta}u_0-\frac{1}{2}\mathbf{1}\right)+\lambda \sum_{k=1}^n \left(\frac{(1-\lambda)^{-(n-k)}}{1-\lambda}- e^{(n-k)\lambda}\right)e^{-(n-k)\tau\Delta}\beta_k\right|\right|_\V\\
	%&\hspace{2em } + \frac{\lambda^2}{1-\lambda} \sum_{k=1}^n (1-\lambda)^{-(n-k)}e^{-(n-k)\tau\Delta}\beta_k\bigg|\bigg|_\V\\
	&\leq\left((1-\lambda)^{-n}-e^{n\lambda}\right)\left|\left|e^{-n\tau\Delta}u_0-\frac{1}{2}\mathbf{1} \right|\right|_\V +\lambda \sum_{k=1}^n \left(\frac{(1-\lambda)^{-(n-k)}}{1-\lambda}- e^{(n-k)\lambda}\right)\left|\left|e^{-(n-k)\tau\Delta}\beta_k\right|\right|_\V\\ 
	&\leq \left((1-\lambda)^{-n}-e^{n\lambda}\right)\left(\left|\left|e^{-t\Delta}u_0-\frac{1}{2}\mathbf{1} \right|\right|_\V +\bigO(\tau)\right)+  C\frac{\lambda}{1-\lambda} \sum_{k=1}^n (1-\lambda)^{-(n-k)}-\lambda C \sum_{k=1}^n e^{(n-k)\lambda}.\end{split}\end{equation*}
The first inequality follows from the triangle inequality since, for $r:=n-k\geq 0$, $(1-\lambda)^{-(r+1)} - e^{r\lambda}\geq 0$ as $ e^{-\lambda r/(r+1)}\geq 1-\lambda r/(r+1) \geq 1-\lambda $, and the second follows by taking \[C:=\sup_{s\in[0,t+\tau]}\sup_{\beta\in\V_{[-1/2,1/2]}}\left|\left|e^{-s\Delta}\beta \right|\right|_\V\leq \sup_{s\in[0,t+\tau]}\left|\left|e^{-s\Delta} \right|\right|\cdot \frac{1}{ 2}\left|\left|\mathbf{1} \right|\right|_\V=\frac{1}{ 2}\left|\left|\mathbf{1} \right|\right|_\V.\]
We continue, 
\begin{equation*}
	RHS= \left(D+\bigO(\tau)\right)\left(\left(1-\lambda\right)^{-n}-e^{t/\varepsilon} \right)+ C\frac{\lambda}{1-\lambda} \frac{(1-\lambda)^{-n}-1}{(1-\lambda)^{-1}-1}- \lambda C \frac{e^{n\lambda}-1}{e^{\lambda}-1}
\end{equation*} 
where $D:=\left|\left|e^{-t\Delta}u_0-\frac{1}{2}\mathbf{1} \right|\right|_\V$. Hence, noting that $(1-\lambda)^{-n} = (1-(t/\varepsilon+\bigO(\tau))/n)^{-n}=e^{t/\varepsilon+\bigO(\tau)} +\bigO(1/n) = e^{t/\varepsilon} +\bigO(\tau) $, we finally have
\begin{equation*}
	RHS=\bigO(\tau) + C \left(e^{t/\varepsilon}  - 1 +\bigO(\tau)-\lambda \frac{e^{t/\varepsilon} -1}{e^{\lambda}-1}\right)
	=C\left(e^{t/\varepsilon}  - 1\right)\frac{e^\lambda-1-\lambda}{e^\lambda-1} +\bigO(\tau) =\bigO(\tau) \end{equation*} as desired.
\end{proof}
\subsection{Proof of convergence to the AC solution}
We consider the limit of \eqref{SDiterate} as $\tau\downarrow 0$, $n\rightarrow \infty$ with $n\tau\rightarrow t$ for some fixed $t$ and $\tau<\varepsilon$. As a prelude to this, for reasons that will soon become clear, we define the piecewise constant function $z_\tau:[0,\infty)\rightarrow\V$
\[z_\tau(s): =\begin{cases} e^{-\tau/\varepsilon} e^{\tau\Delta}\beta^{[\tau]}_1, & 0\leq s \leq \tau\\
									 e^{-k\tau/\varepsilon} e^{k\tau\Delta} \beta^{[\tau]}_k,	& (k-1)\tau<s\leq k\tau \text{ for } k\in\mathbb{N}						
\end{cases}\]
and the function \[\gamma_\tau(s):= e^{s/\varepsilon}e^{-s\Delta} z_\tau(s)=\begin{cases} e^{-(\tau-s)/\varepsilon} e^{(\tau-s)\Delta}\beta^{[\tau]}_1, & 0\leq s \leq \tau\\
									 e^{-(k\tau-s)/\varepsilon} e^{(k\tau-s)\Delta} \beta^{[\tau]}_k,	& (k-1)\tau<s\leq k\tau			\text{ for } k\in\mathbb{N}				
\end{cases}\]
(note that for bookkeeping we have introduced the superscript $[\tau]$ to keep track of the time-step governing a particular sequence of $u_n$ and $\beta_n$). We note an important convergence.
\begin{prop}
For any sequence $\tau^{(0)}_n\downarrow 0$ with $\tau^{(0)}_n<\varepsilon$ for all $n$,  there exists a function $z:[0,\infty) \rightarrow \V$ and a subsequence $\tau_n$ such that $z_{\tau_n}$ converges weakly to $z$ in $L^2_{loc}$.
\end{prop}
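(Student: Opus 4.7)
The plan is to reduce weak $L^2_{loc}$ convergence to weak convergence on each bounded interval $[0,T]$ via a diagonal subsequence argument, and to establish the requisite uniform bound in $L^2([0,T];\V)$ by exploiting the a priori bound $\beta^{[\tau]}_k \in \V_{[-1/2,1/2]}$ guaranteed by \eqref{betasoln} together with the operator-norm control on $e^{k\tau\Delta}$ on a bounded time horizon.

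First I would observe that for each fixed $T>0$, the sequence $\{z_\tau\}$ is uniformly bounded in $L^2([0,T];\V)$. Indeed, for $s \in ((k-1)\tau, k\tau]$ with $k\tau \le T + \tau$, self-adjointness and non-negativity of $\Delta$ give $\|e^{k\tau\Delta}\| = e^{k\tau\|\Delta\|}$, so
\begin{equation*}
\|z_\tau(s)\|_\V \;\le\; e^{-k\tau/\varepsilon}\, e^{k\tau\|\Delta\|}\, \|\beta^{[\tau]}_k\|_\V \;\le\; \tfrac{1}{2}\|\mathbf{1}\|_\V \cdot e^{(T+\tau)(\|\Delta\|-1/\varepsilon)_+},
\end{equation*}
where $(x)_+ := \max(x,0)$. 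Since $\tau_n^{(0)} \downarrow 0$ with $\tau_n^{(0)} < \varepsilon$, the quantity $e^{(T+\tau_n^{(0)})(\|\Delta\|-1/\varepsilon)_+}$ is bounded by some constant $C_T$ depending only on $T$, $\varepsilon$ and $\|\Delta\|$. Integrating over $[0,T]$ gives $\|z_\tau\|_{L^2([0,T];\V)} \le \sqrt{T}\, C_T \cdot \tfrac{1}{2}\|\mathbf{1}\|_\V$, uniformly in $\tau$.

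Next I would invoke reflexivity of $L^2([0,T];\V)$ (this is a Hilbert space, being an $L^2$ of finitely many scalar $L^2$ spaces weighted by $d_i^r$), so bounded sequences have weakly convergent subsequences by the Banach--Alaoglu theorem. Apply this with $T = 1$ to extract a subsequence $\tau_n^{(1)}$ of $\tau_n^{(0)}$ with $z_{\tau_n^{(1)}} \rightharpoonup z^{(1)}$ weakly in $L^2([0,1];\V)$. Then with $T = 2$, extract a further subsequence $\tau_n^{(2)}$ of $\tau_n^{(1)}$ with $z_{\tau_n^{(2)}} \rightharpoonup z^{(2)}$ weakly in $L^2([0,2];\V)$; uniqueness of weak limits forces $z^{(2)}|_{[0,1]} = z^{(1)}$ a.e. Iterating, for each $N\in\mathbb{N}$ I obtain $\tau_n^{(N)}$ and $z^{(N)} \in L^2([0,N];\V)$ with $z^{(N+1)}|_{[0,N]} = z^{(N)}$ a.e., and define $z:[0,\infty)\to\V$ by $z(s) := z^{(N)}(s)$ for any $N > s$, which is well-defined a.e.

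Finally I would take the diagonal subsequence $\tau_n := \tau_n^{(n)}$. For any fixed $T > 0$, choose $N \ge T$; then $\{\tau_n\}_{n \ge N}$ is a subsequence of $\tau_n^{(N)}$, so $z_{\tau_n} \rightharpoonup z^{(N)} = z$ weakly in $L^2([0,N];\V)$, and restriction to $[0,T]$ (a continuous operation) yields weak convergence in $L^2([0,T];\V)$. Since $T$ was arbitrary, this is precisely weak convergence in $L^2_{loc}([0,\infty);\V)$. I do not expect any substantive obstacle: the only mild subtlety is the exponential factor $e^{k\tau\|\Delta\|}$, but this is tamed by the companion factor $e^{-k\tau/\varepsilon}$ or, if $\|\Delta\| > 1/\varepsilon$, simply by restricting to a bounded time interval before applying Banach--Alaoglu.
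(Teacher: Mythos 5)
Your proposal is correct and follows essentially the same route as the paper: the uniform bound on $\|z_\tau(s)\|_\V$ from $\beta_k^{[\tau]}\in\V_{[-1/2,1/2]}$ and the operator-norm control of $e^{-k\tau(\varepsilon^{-1}I-\Delta)}$ on a bounded horizon, then Banach--Alaoglu on each $L^2([0,N];\V)$ combined with a diagonal subsequence. Your explicit verification that the weak limits on $[0,N]$ and $[0,N+1]$ are consistent under restriction is a small point the paper leaves implicit, but otherwise the arguments coincide.
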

\begin{proof} For $N\in \mathbb{N}$, consider  $z_\tau|_{[0,N]}$. As the $\beta_k^{[\tau]}\in\V_{[-1/2,1/2]}$ for all $k$ and $\tau$, we have for all $s\in[0,N]$ and $0\leq\tau<\varepsilon$
\begin{equation*}\left|\left|z_\tau(s)\right|\right|_\V \leq \sup_{s'\in[0,N+\varepsilon]}\left|\left|e^{-s'(\frac{1}{\varepsilon}I-\Delta)}\right|\right| \cdot \frac{1}{2} \left|\left|\mathbf{1}\right|\right|_\V \leq \max\left\{1, e^{(N+\varepsilon)(||\Delta||-\varepsilon^{-1})}\right\} \cdot \frac{1}{2} \left|\left|\mathbf{1}\right|\right|_\V.\footnotemark
\end{equation*}
\footnotetext{Here we have used that for $s\leq N$ the corresponding $k\tau$ in the exponent of $z_\tau(s)$ is less than $N+\tau$, and that $||e^{-s'(\frac{1}{\varepsilon}I-\Delta)}||=e^{s'(||\Delta||-\varepsilon^{-1})}$ is maximised at the endpoints of $[0,N+\varepsilon]$. }
Thus the $z_\tau|_{[0,N]}$ are uniformly bounded for $0\leq\tau<\varepsilon$, and hence lie in a closed ball in $L^2([0,N];\V)$. By the Banach--Alaoglu theorem on a Hilbert space, this ball is weak-compact. We proceed by a ``local-to-global" diagonal argument. For $N = 1$: by compactness we have a subsequence $\tau^{(1)}$ of $\tau^{(0)}$ such that $z_{\tau^{(1)}_n}$ converges weakly to $z$ on $[0,1]$. Iterating, by compactness we have a subsequence $\tau^{(N+1)}$ of $\tau^{(N)}$ such that $z_{\tau^{(N+1)}_n}$ converges weakly to $z$ on $[0,N+1]$. Finally, define $\tau_n : = \tau_n^{(n)}$. Then for all bounded $T\subseteq [0,\infty)$, we have $T\subseteq [0,M]$ for some $M\in \mathbb{N}$ and hence $z_{\tau_n}|_T$ is eventually a subsequence of $z_{\tau^{(M)}_n}|_T$, so converges weakly to $z|_T$.
\end{proof}
\begin{cor} \label{zcor} From $z_{\tau_n} \rightharpoonup z$ in $L^2_{loc}$ we infer\emph{:}
\begin{enumerate}[ A.]
\item $\gamma_{\tau_n}\rightharpoonup \gamma $ \emph{(}in $L^2_{loc}$\emph{)} where $\gamma(s) := e^{s/\varepsilon}e^{-s\Delta} z$.
\item For all $t\geq 0$, \[\int_0^t z_{\tau_n}(s)\; ds \rightarrow \int_0^t z(s)\; ds.\]
\item Replacing $\tau_n$ by a subsequence, we have strong convergence of the Cesàro sums, i.e. for all bounded $T\subseteq[0,\infty)$, as $N\rightarrow \infty$ \begin{align*}&\frac{1}{N}\sum_{n=1}^N z_{\tau_n} \rightarrow z, &&\frac{1}{N}\sum_{n=1}^N \gamma_{\tau_n} \rightarrow \gamma, && \text { in } L^2(T;\V).\end{align*} 
\end{enumerate}
\end{cor}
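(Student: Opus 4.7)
The proof of Corollary \ref{zcor} is essentially an application of standard Hilbert space techniques, leveraging two features of the setup: the self-adjointness and uniform boundedness of the operator $e^{s/\varepsilon}e^{-s\Delta}$ on any bounded subinterval of $[0,\infty)$, and the finite-dimensionality of $\V$ (on which weak convergence of vectors implies strong convergence). The overall strategy is to unfold each claim by directly testing against $L^2$ test functions.

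For part A, given a test function $\varphi\in L^2(T;\V)$ on a bounded interval $T$, I would define $\psi(s):=e^{s/\varepsilon}e^{-s\Delta}\varphi(s)$. Since $s\mapsto ||e^{s/\varepsilon}e^{-s\Delta}||$ is continuous, hence bounded, on the compact set $\overline T$, we have $\psi\in L^2(T;\V)$. Self-adjointness of $e^{-s\Delta}$ then gives
\[\int_T \langle \gamma_{\tau_n}(s),\varphi(s)\rangle_\V\,ds = \int_T \langle z_{\tau_n}(s),\psi(s)\rangle_\V\,ds \longrightarrow \int_T \langle z(s),\psi(s)\rangle_\V\,ds = \int_T \langle \gamma(s),\varphi(s)\rangle_\V\,ds\]
by the weak convergence of $z_{\tau_n}$ to $z$. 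For part B, one tests the weak convergence against the test function $s\mapsto \mathbf{1}_{[0,t]}(s)\,e_i$ for each standard basis vector $e_i$ of $\V$; componentwise convergence of $\int_0^t z_{\tau_n}(s)\,ds$ in $\mathbb{R}$ follows immediately, and since $\V$ is finite-dimensional this yields convergence in $\V$.

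For part C, I would invoke the Banach--Saks theorem: any bounded sequence in a Hilbert space has a subsequence whose Ces\`aro means converge strongly, and when the original sequence already converges weakly, the strong Ces\`aro limit must agree with the weak limit. Applying this on $L^2([0,N];\V)$ for each $N\in\mathbb{N}$ yields nested subsequences along which the Ces\`aro sums converge strongly to $z$, and a diagonal argument (mirroring the local-to-global extraction in the preceding proposition) then produces a single subsequence working on every bounded $T\subseteq[0,\infty)$. For the claim about $\gamma_{\tau_n}$, observe
\[\frac{1}{N}\sum_{n=1}^N \gamma_{\tau_n}(s) = e^{s/\varepsilon}e^{-s\Delta}\left(\frac{1}{N}\sum_{n=1}^N z_{\tau_n}(s)\right),\]
so strong convergence of the $z_{\tau_n}$-averages to $z$, composed with an operator whose norm is uniformly bounded on $\overline T$, yields strong convergence of the $\gamma_{\tau_n}$-averages to $\gamma$ in $L^2(T;\V)$.

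The only real obstacle I anticipate is the bookkeeping of the diagonal extraction in part C (making sure the chosen subsequence simultaneously handles every bounded $T$); beyond that, each part reduces either to the definition of weak $L^2$ convergence paired with a bounded self-adjoint operator, or to a textbook invocation of Banach--Saks.
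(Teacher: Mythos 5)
Your proposal is correct and follows essentially the same route as the paper: part A via self-adjointness and uniform boundedness of $e^{s/\varepsilon}e^{-s\Delta}$ on bounded intervals, part B by testing the weak convergence directly, and part C via Banach--Saks plus a local-to-global diagonal extraction. The only (welcome) addition is that you spell out why the Ces\`aro means of the $\gamma_{\tau_n}$ inherit strong convergence from those of the $z_{\tau_n}$, a step the paper leaves implicit.
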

\begin{proof}
(A) follows since $f\mapsto e^{s/\varepsilon}e^{-s\Delta} f$ is a continuous self-adjoint map on $L^2(T;\V)$ for $T$ bounded. Hence for all $f\in L^2(T;\V)$, 
\[(\gamma_{\tau_n},f)_{s\in T} = (z_{\tau_n},e^{s/\varepsilon}e^{-s\Delta}f)_{s\in T}\rightarrow (z,e^{s/\varepsilon}e^{-s\Delta}f)_{s\in T} = (\gamma,f)_{s\in T}.\]
(B) is a direct consequence of weak convergence. (C) follows by the Banach--Saks theorem \cite{BS} and a ``local-to-global" diagonal argument as in the above proof.
\end{proof}
%\begin{nb} Since $L^2(T;\V)$ is separable, the theorem of Banach--Alaoglu can be stated constructively, as can the theorem of Banach--Saks. Hence these proofs can be made constructive to yield an explicit $\tau_n$. We omit the details of this construction. 
%\end{nb}
We now return to the question of convergence of the semi-discrete iterates. Taking $\tau$ to zero along the sequence $\tau_n$, we define for all $t\geq 0$ the continuous-time function: 
\be \label{uhat}\hat u(t) := \lim_{n\rightarrow\infty, m =\ceil{t/\tau_n}} u^{[\tau_n]}_m. \ee
Therefore by \eqref{SDiterate}
\begin{equation*}
%\hspace{-3em}
\begin{split}
\hat u(t) = \frac{1}{2}\mathbf{1} + \lim_{n\rightarrow \infty} e^{m\tau_n/\varepsilon} e^{-m\tau_n\Delta}\left(u_0 -\frac{1}{2}\mathbf{1}\right) +\frac{1}{\varepsilon}e^{m\tau_n/\varepsilon} e^{-m\tau_n\Delta} \tau_n\sum_{k=1}^m e^{-k\tau_n/\varepsilon} e^{k\tau_n\Delta}\beta^{[\tau_n]}_k\\
\intertext{and by rewriting the sum term via the definition of $z_{\tau_n}$:}
\hat u(t)= \frac{1}{2}\mathbf{1} + \lim_{n\rightarrow \infty} e^{m\tau_n/\varepsilon} e^{-m\tau_n\Delta}\left(u_0 -\frac{1}{2}\mathbf{1}\right) +\frac{1}{\varepsilon}e^{m\tau_n/\varepsilon} e^{-m\tau_n\Delta}\int_0^{m\tau_n} z_{\tau_n}(s)\; ds.
\end{split}\end{equation*}
Then to prove global convergence we must show the following desiderata:
\begin{enumerate}[ i.]
\item $\hat u(t)$ exists for all $t\geq 0$,
\item $\hat u(t)\in\V_{[0,1]}$ for all $t\geq 0$,
\item $\hat u$ is continuous, $H^1_{loc}$, and is for a.e. $t\geq 0$ a solution to the AC ODE \eqref{ACobs2}. 
\end{enumerate}

Towards (i), let $A:= \varepsilon^{-1}I-\Delta$ and $e_n :=  m\tau_n - t \in [0,\tau_n)$. Then 
\[e^{m\tau_n/\varepsilon} e^{-m\tau_n\Delta} = e^{(t+e_n)A} = e^{tA}(I + \bigO(e_n)) = e^{tA} + \bigO(\tau_n)\]
and so 
\[\hat u(t) =  \frac{1}{2}\mathbf{1} + \lim_{n\rightarrow\infty} e^{tA}\left(u_0 -\frac{1}{2}\mathbf{1}\right)+\frac{1}{\varepsilon}\left(e^{tA} + \bigO(\tau_n)\right)\left(\int_0^{t} z_{\tau_n}(s)\; ds+\int_t^{t+e_n}  z_{\tau_n}(s) \; ds\right).  \]
Hence since the $z_{\tau_n}$ are uniformly bounded on $[0,t+\max_n e_n]$ and by Corollary \ref{zcor}(B)
\be \label{uhatsoln} \begin{split}\hat u(t) &=  \frac{1}{2}\mathbf{1} + e^{t/\varepsilon} e^{-t\Delta}\left(u_0 -\frac{1}{2}\mathbf{1}\right) + \frac{1}{\varepsilon}e^{t/\varepsilon} e^{-t\Delta}\int_0^{t} z(s)\; ds \\
&= \frac{1}{2}\mathbf{1} + e^{t/\varepsilon} e^{-t\Delta}\left(u_0 -\frac{1}{2}\mathbf{1}\right) + \frac{1}{\varepsilon}\int_0^{t} e^{(t-s)/\varepsilon} e^{-(t-s)\Delta} \gamma(s)\; ds.\end{split}\ee

To show (ii), we note simply that $\hat u(t)$ is a limit of semi-discrete iterates, each of which we know lies in $\V_{[0,1]}$. 

%To show (iii), we note that  $\int_0^t z(s)\; ds$ is continuous since $z$ is a weak limit of locally bounded functions, so is locally bounded, and hence $\hat u$ is continuous by \eqref{uhatsoln}. Next, by (ii) $\hat u$ is bounded so is locally $L^2$. It is easy to check that $\hat u$ has weak derivative \[\frac{d\hat u}{dt} = \left(\frac{1}{\varepsilon}I -\Delta\right)\left(u_0 -\frac{1}{2}\mathbf{1}\right) + \frac{1}{\varepsilon}e^{t/\varepsilon} e^{-t\Delta}\left(z(t) + \left(\frac{1}{\varepsilon}I -\Delta\right) \int_0^t z(s)\; ds\right)\] 
%which is locally $L^2$ since $z$ is a weak limit of locally $L^2$ functions (so is locally $L^2$) and $\int_0^t z(s)\; ds$ is a pointwise limit of locally bounded functions, so is locally bounded.

Finally to show (iii), we recall the conditions from Theorem \ref{ACexplicit}. By \eqref{uhatsoln} we have that $\hat u$ has the desired integral form, and since $\gamma$ is a weak limit of locally bounded and locally integrable functions we have that $\gamma$ is locally bounded a.e. and is locally integrable.
% we can rearrange the AC ODE into \[\frac{d}{dt}\left(e^{-t/\varepsilon}e^{t\Delta}\left(u(t)-\frac{1}{2}\mathbf{1}\right)\right)=\varepsilon^{-1}e^{-t/\varepsilon}e^{t\Delta}\beta(t).\] 
%Inspection of \eqref{uhatsoln} shows that to prove that $\hat u$ solves this ODE a.e.

So  it suffices to check that $\gamma(t) \in \mathcal{B}(\hat u(t))$ for a.e. $t\geq 0$. By Corollary \ref{zcor}(C) we have that on each bounded $T\subseteq[0,\infty)$, $\gamma$ is the $L^2(T;\V)$ limit as $N\rightarrow \infty$ of \[S_N:= \frac{1}{N} \sum_{n=1}^N \gamma_{\tau_n}.\] Since $L^2$ convergence implies a.e. pointwise convergence along a subsequence, by a ``local-to-global" diagonal argument we can extract a sequence $N_k \rightarrow \infty$ such that for a.e. $t\geq 0$ 
\[\gamma(t) = \lim_{k\rightarrow\infty}  \frac{1}{N_k} \sum_{n=1}^{N_k} \gamma_{\tau_n}(t).\]
Now recall $m := \ceil{t/\tau_n}$ and $e_n := m\tau_n -t \in [0,\tau_n)$. Then 
\[\gamma_{\tau_n}(t) = e^{-e_nA}\beta_m^{[\tau_n]} = \beta_m^{[\tau_n]} + \bigO(e_n) = \beta_m^{[\tau_n]} + \bigO(\tau_n)  \]
as the $\beta$ are uniformly bounded. Therefore for a.e. $t\geq 0$,
\[\gamma(t) =\lim_{k\rightarrow\infty}  \frac{1}{N_k} \sum_{n=1}^{N_k} \beta_m^{[\tau_n]}.\] 
Recall that $u^{[\tau_n]}_m\rightarrow\hat u(t)$ and $\beta_m^{[\tau_n]}\in \mathcal{B}( u^{[\tau_n]}_m)$. Suppose first that $\hat u_i(t) \in (0,1)$.  Then we have some $M$ such that for all $n>M$, $(u^{[\tau_n]}_m)_i\in (0,1)$ and so $(\beta^{[\tau_n]}_m)_i=0$. Hence 
\[\gamma_i(t) = \lim_{k\rightarrow\infty}  \frac{1}{N_k}\left( \sum_{n=1}^{M} (\beta_m^{[\tau_n]})_i +  \sum_{n=M+1}^{N_k} 0\right) = 0\] 
as desired. Next suppose $\hat u_i(t) = 0$. Then we have some $M$ such that for all $n>M$, $(u^{[\tau_n]}_n)_i\in [0,1)$ and so $ (\beta^{[\tau_n]}_n)_i \geq 0$. Hence 
\[\gamma_i(t) \geq \lim_{k\rightarrow\infty}  \frac{1}{N_k}\left( \sum_{n=1}^{M} (\beta_m^{[\tau_n]})_i + \sum_{n=M+1}^{N_k} 0 \right) =0\] 
as desired. Likewise for $\hat u_i(t) = 1$, $\gamma_i(t) \leq 0$. Hence we have $\gamma(t)\in\mathcal{B}(\hat u (t))$. 

In summary, we have the covergence result.
\begin{thm}\label{SDlimit}
For any given $u_0\in\V_{[0,1]}$, $\varepsilon>0$ and $\tau_n\downarrow 0$, there exists a subsequence $\tau'_n$ of $\tau_n$ with $\tau'_n<\varepsilon$ for all $n$, such that along this subsequence the semi-discrete iterates $(u^{[\tau'_n]}_m,\beta^{[\tau'_n]}_m)$ given by \eqref{SDobs} with initial state $u_0$ converge to the AC solution with initial condition $u_0$. That is, for each $t\geq 0$, as $n\rightarrow\infty$ and $m=\ceil{t/\tau'_n}$, $u^{[\tau'_n]}_m\rightarrow \hat u(t)$, and there is a sequence $N_k\rightarrow\infty$ \emph{(}independent of $t$\emph{)} such that for almost every $t\geq 0$, $\frac{1}{N_k}\sum_{n=1}^{N_k}\beta^{[\tau'_n]}_m\rightarrow \gamma(t)$, where $(\hat u,\gamma)$ is the solution to \eqref{ACobs2} with $\hat u(0) = u_0$. 
\end{thm}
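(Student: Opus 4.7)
The plan is to use the explicit formula for the $n$-th semi-discrete iterate, identify it asymptotically with the integral form of the AC solution given by Theorem~\ref{ACexplicit}, and extract a valid $\beta$-term from the sequence $\{\beta_n^{[\tau]}\}$ via soft compactness. Since $\tau_n\downarrow 0$, the requirement $\tau'_n<\varepsilon$ is automatic for all sufficiently large $n$, so the initial terms of the sequence can be discarded without loss of generality.

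First I would invoke the iterate formula
\[
u_n = \tfrac{1}{2}\mathbf{1} + e^{n\lambda}e^{-n\tau\Delta}\bigl(u_0 - \tfrac{1}{2}\mathbf{1}\bigr) + \lambda\sum_{k=1}^n e^{(n-k)\lambda}e^{-(n-k)\tau\Delta}\beta_k^{[\tau]} + \bigO(\tau)
\]
and recognise the sum as a Riemann-type discretisation of $\varepsilon^{-1}\int_0^t e^{(t-s)/\varepsilon}e^{-(t-s)\Delta}\gamma(s)\,ds$ in \eqref{ACobssoln2}. To make this rigorous I encode the $\beta_k^{[\tau]}$ in the piecewise-constant auxiliary function $z_\tau$ defined above and its transform $\gamma_\tau(s)=e^{s/\varepsilon}e^{-s\Delta}z_\tau(s)$. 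Since $\beta_k^{[\tau]}\in\V_{[-1/2,1/2]}$ uniformly in $k$ and $\tau$, the family $\{z_\tau\}$ is uniformly bounded on every compact subinterval of $[0,\infty)$. Banach--Alaoglu applied with a local-to-global diagonal argument then furnishes a subsequence (call it $\tau_n$) along which $z_{\tau_n}\rightharpoonup z$ in $L^2_{loc}([0,\infty);\V)$; Corollary~\ref{zcor} upgrades this to weak convergence $\gamma_{\tau_n}\rightharpoonup\gamma$ and the integral identity $\int_0^t z_{\tau_n}\,ds\to\int_0^t z\,ds$ for every $t\ge0$.

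Next I would define $\hat u(t):=\lim_{n\to\infty}u^{[\tau_n]}_{\lceil t/\tau_n\rceil}$ as in \eqref{uhat}. Substituting the iterate formula and using that $e^{m\tau_n/\varepsilon}e^{-m\tau_n\Delta}=e^{t/\varepsilon}e^{-t\Delta}+\bigO(\tau_n)$ when $m\tau_n\to t$ yields immediately that $\hat u$ obeys the integral form \eqref{ACobssoln2} with the $\gamma$ above. Since each $u^{[\tau_n]}_m\in\V_{[0,1]}$ and this set is closed, $\hat u(t)\in\V_{[0,1]}$; and $\gamma$ inherits essential local boundedness and local integrability from the uniformly bounded $\gamma_{\tau_n}$.

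The main obstacle is verifying $\gamma(t)\in\mathcal B(\hat u(t))$ for almost every $t$, since weak $L^2$ convergence does not preserve pointwise sign constraints. To overcome this I would invoke the Banach--Saks part of Corollary~\ref{zcor}(C): after passing to a further subsequence the Cesàro averages $\frac{1}{N}\sum_{n=1}^{N}\gamma_{\tau_n}$ converge strongly in $L^2_{loc}$ to $\gamma$, and along some sequence of indices $N_k\to\infty$ the convergence is also pointwise almost everywhere. A coordinate-wise case analysis then finishes the job: at $t$ with $\hat u_i(t)\in(0,1)$, the entries $(u^{[\tau_n]}_m)_i$ eventually lie in $(0,1)$, forcing all but finitely many $(\beta^{[\tau_n]}_m)_i$ to vanish (by \eqref{beta} and \eqref{betasoln}), hence $\gamma_i(t)=0$; the cases $\hat u_i(t)\in\{0,1\}$ produce the required sign on $\gamma_i(t)$ by the same finite-exception argument, using $\gamma_{\tau_n}(t)=\beta_m^{[\tau_n]}+\bigO(\tau_n)$ to pass between $\gamma$-averages and $\beta$-averages. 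With every hypothesis of Theorem~\ref{ACexplicit} verified, $(\hat u,\gamma)$ is the AC solution with initial condition $u_0$, completing the proof.
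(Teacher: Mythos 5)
Your proposal follows essentially the same route as the paper's own proof: the explicit iterate formula, the auxiliary functions $z_\tau$ and $\gamma_\tau$, Banach--Alaoglu with a local-to-global diagonal argument for weak $L^2_{loc}$ convergence, and Banach--Saks Cesàro averaging with a.e. pointwise convergence to verify $\gamma(t)\in\mathcal B(\hat u(t))$ by the same coordinate-wise case analysis. The argument is correct and complete in outline.
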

\begin{nb}
By the uniqueness of AC trajectories from Theorem \ref{uniquenessprequel},
%{(}i.e. Corollaries~\ref{ACuniqueness} and \ref{ACuniqueness2}{)}
we can employ a trivial, if lesser known, fact about topological spaces towards removing the need to pass to a subsequence{:} if $(X,\rho)$ is a topological space and $x_n,x\in X$ are such that every subsequence of $x_n$ has a further subsequence converging to $x$, then $x_n\rightarrow x$.\footnote{Suppose $x_n\nrightarrow x$. Then there exists $U \in\rho$ such that $x\in U$ and infinitely many $x_n\notin U$. Choose $x_{n_k}$ such that for all $k$, $x_{n_k}\notin U$. This subsequence has no further subsequence converging to $x$. \qed}\
Let $\tau_n\downarrow 0$ with $\tau_n<\varepsilon$ for all $n${:} define the sequence $x_n:=t\mapsto u^{[\tau_n]}_{\ceil{t/\tau_n}}\in (\V_{t\in[0,\infty)},\rho)$ with $\rho$ the topology of pointwise convergence. Then by the above, every subsequence $x_{n_k}$ has a subsequence converging to an AC solution with initial condition $u_0$. By uniqueness, there is only one such solution $\hat u$, which we can take as the ``$x$'' in the fact. We therefore have that $x_n \rightarrow \hat u$ pointwise, without passing to a subsequence.
Likewise, since the corresponding $\gamma$ is unique up to a.e. equivalence, we have that $z_{\tau_n}\rightharpoonup z$ and $\gamma_{\tau_n}\rightharpoonup\gamma$ in $L^2_{loc}$ without passing to a subsequence.
\end{nb}
\subsection{Consequences of Theorem \ref{SDlimit}}\label{consequences}
To round out this section, we use the representation of the AC solution as a limit of semi-discrete approximations to finish the proof of well-posedness of AC flow, and to obtain a control on the decrease of $\GL$ along trajectories. 

First, we prove well-posedness by recalling the continuity property from Theorem \ref{SDLips}. 
\begin{proof}[Proof of Theorem \ref{wellp}]
Let $u_0,v_0\in\V_{[0,1]}$ define AC trajectories $ u, v$.

Fix $t\geq 0$ and let $m:= \ceil{t/\tau_n}$. By Theorem \ref{SDlimit}, we can choose appropriate $\tau_n\downarrow 0$ such that $u_m^{[\tau_n]}\rightarrow u(t)$ and $v_m^{[\tau_n]}\rightarrow v(t)$ as $n\rightarrow\infty$. Then by \eqref{SDlipeqn}:
\[
||u_m^{[\tau_n]}-v_m^{[\tau_n]}||_\V \leq (1-\tau_n/\varepsilon)^{-m}||u_0-v_0||_\V
\]
and taking $n\rightarrow\infty$ gives \eqref{wpeq}.
\end{proof}

Next, to obtain a control on the behaviour of $\GL(\hat u(t))$ we consider the Lyapunov functional $H$ for the semi-discrete scheme defined in \eqref{Lyap}. Recalling \eqref{SDLyap}, for $u\in\V_{[0,1]}$ we consider a scaling of the Lyapunov functional \[ H_\tau(u) :=\frac{1}{2\tau}H(u) = \GL(u)-\frac{1}{2}\tau \ip{u,Q_\tau u}\]
where $\tau^2 Q_\tau := e^{-\tau\Delta}-I + \tau\Delta$. Note that $Q_\tau$ has eigenvalues $\tau^{-2}(e^{-\tau\lambda_k}-1+\tau\lambda_k)\leq\frac{1}{2}\lambda_k^2$, for $\lambda_k$ the eigenvalues of $\Delta$, and so $|\ip{u,Q_\tau u}|\leq \frac{1}{2}||\Delta||^2||u||_\V^2\leq  \frac{1}{2}||\Delta||^2||\mathbf{1}||_\V^2$. 
It follows that as $\tau\rightarrow 0$, $H_\tau \rightarrow \GL$ uniformly on $\V_{[0,1]}$. We deduce the following result.
\begin{prop} \label{Htauprop}
Let $u_\tau, u\in \V_{[0,1]}$ and $||u_\tau-u||_\V \rightarrow 0$ as $\tau\rightarrow 0$. Then $H_\tau(u_\tau)\rightarrow \GL(u)$. 
\end{prop}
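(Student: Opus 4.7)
The plan is to split $|H_\tau(u_\tau) - \GL(u)|$ via the triangle inequality into a ``uniform convergence'' piece and a ``continuity of $\GL$'' piece, and control each separately. Explicitly, I would write
\[
|H_\tau(u_\tau) - \GL(u)| \leq |H_\tau(u_\tau) - \GL(u_\tau)| + |\GL(u_\tau) - \GL(u)|.
\]

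For the first term, observe that by the definition of $H_\tau$, for any $v\in \V_{[0,1]}$ we have
\[
|H_\tau(v) - \GL(v)| = \tfrac{1}{2}\tau\,|\ip{v, Q_\tau v}| \leq \tfrac{1}{4}\tau\,||\Delta||^2\,||\mathbf{1}||_\V^2,
\]
using the bound on the eigenvalues of $Q_\tau$ noted just before the proposition and the fact that $||v||_\V \leq ||\mathbf{1}||_\V$ on $\V_{[0,1]}$. This is exactly the uniform convergence $H_\tau \to \GL$ on $\V_{[0,1]}$ already remarked upon, and applied at $v = u_\tau$ it gives a bound tending to $0$ as $\tau\downarrow 0$ independently of the sequence $u_\tau$.

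For the second term, I would just invoke continuity of $\GL$ on $\V_{[0,1]}$ with respect to $||\cdot||_\V$. Since $V$ is finite, $\V$ is finite-dimensional and $\GL$ is a polynomial (on $\V_{[0,1]}$ the $I_{[0,1]}$ part of $W$ contributes $0$, so $\GL(v) = \tfrac{1}{2}||\nabla v||_\mathcal{E}^2 + \tfrac{1}{2\varepsilon}\ip{v,\mathbf{1}-v}$), hence continuous. Thus $||u_\tau - u||_\V \to 0$ forces $\GL(u_\tau) \to \GL(u)$. Combining both bounds completes the proof. There is no real obstacle here; the statement is essentially a restatement of ``uniform convergence plus continuity of the limit implies convergence along convergent sequences.''
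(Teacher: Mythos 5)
Your proof is correct, but it uses a different decomposition from the paper's. You split $H_\tau(u_\tau)-\GL(u)$ as $\bigl(H_\tau(u_\tau)-\GL(u_\tau)\bigr)+\bigl(\GL(u_\tau)-\GL(u)\bigr)$, controlling the first term by the uniform bound $|H_\tau(v)-\GL(v)|=\tfrac12\tau|\ip{v,Q_\tau v}|\leq\tfrac14\tau||\Delta||^2||\mathbf{1}||_\V^2$ on $\V_{[0,1]}$ and the second by continuity of $\GL$ (a polynomial on the finite-dimensional set $\V_{[0,1]}$, where the obstacle term vanishes). The paper instead writes $H_\tau(u_\tau)-\GL(u)=\bigl(H_\tau(u_\tau)-H_\tau(u)\bigr)+\bigl(H_\tau(u)-\GL(u)\bigr)$: the second term tends to $0$ by the same uniform convergence but evaluated at the \emph{fixed} point $u$, while the first is handled by an explicit algebraic identity,
\[
H_\tau(u_\tau)-H_\tau(u)=\frac{1}{2}\left\langle u_\tau-u,\;\frac{1}{\varepsilon}\left(\mathbf{1}-u_\tau-u\right)+(\Delta-\tau Q_\tau)(u_\tau+u)\right\rangle_\V,
\]
which exhibits a modulus of continuity for $H_\tau$ that is uniform in $\tau$. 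The two routes are the two standard ways of proving ``uniform convergence plus convergent argument implies convergence of the diagonal sequence''; yours trades the paper's explicit equicontinuity computation for an appeal to finite-dimensional continuity of the limit functional, which is arguably the cleaner choice given that the uniform estimate $H_\tau\to\GL$ on $\V_{[0,1]}$ is already recorded in the text immediately before the proposition. Both arguments are complete and there is no gap in yours.
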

\begin{proof}
We note that it suffices to show that $H_\tau(u_\tau)-H_\tau(u)\rightarrow 0$, since
\[H_\tau(u_\tau) -\GL(u) = H_\tau(u_\tau) -H_\tau(u) +H_\tau(u) -\GL(u).\] Considering \eqref{Lyap} (and recalling that $\lambda:=\tau/\varepsilon$) we get that \[H_\tau(u_\tau)-H_\tau(u) = \frac{1}{2}\left\langle u_\tau -u, \frac{1}{\varepsilon}\left(1-u_\tau-u\right)+(\Delta -\tau Q_\tau)(u_\tau +u)\right\rangle_\V\rightarrow 0\] since the latter term in the inner product is bounded uniformly in $\tau$.
\end{proof}
\begin{thm}
The AC trajectory $\hat u$ defined by \eqref{uhat} has $\GL(\hat u(t))$ monotonically decreasing in $t$. More precisely\emph{:} for all $t > s \geq 0 $, \be \label{GLstep} 
 \GL(\hat u(s)) -  \GL(\hat u(t)) \geq \frac{1}{2(t-s)} \left|\left|\hat u(s) -\hat u(t) \right|\right|_\V^2.
\ee 
\end{thm}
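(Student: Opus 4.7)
The plan is to lift the discrete Lyapunov inequality \eqref{Hstep} through the $\tau \downarrow 0$ limit via Proposition~\ref{Htauprop}. Specifically, Theorem~\ref{Lyapthm} gives, for any $n$,
\[
    H(u_n) - H(u_{n+1}) \;\geq\; (1-\lambda)\left\|u_{n+1}-u_n\right\|_\V^2,
\]
and dividing by $2\tau$ yields the corresponding inequality for $H_\tau$. First I would fix $0 \leq s < t$ and, along the sequence $\tau_n \downarrow 0$ from Theorem~\ref{SDlimit}, set $m_n := \lceil s/\tau_n \rceil$ and $k_n := \lceil t/\tau_n \rceil$, so that $m_n\tau_n \to s$, $k_n\tau_n \to t$, and $(k_n-m_n)\tau_n \to t-s$.

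Next I would telescope the single-step inequality from index $m_n$ to $k_n-1$:
\[
    H_{\tau_n}\!\left(u_{m_n}^{[\tau_n]}\right) - H_{\tau_n}\!\left(u_{k_n}^{[\tau_n]}\right) \;\geq\; \frac{1-\lambda_n}{2\tau_n}\sum_{j=m_n}^{k_n-1}\left\|u_{j+1}^{[\tau_n]} - u_j^{[\tau_n]}\right\|_\V^2.
\]
The key combinatorial step is a Cauchy--Schwarz/convexity bound: writing $a_j := \|u_{j+1}^{[\tau_n]} - u_j^{[\tau_n]}\|_\V$, we have $\sum_{j=m_n}^{k_n-1} a_j^2 \geq (k_n-m_n)^{-1}\bigl(\sum_{j=m_n}^{k_n-1} a_j\bigr)^2$, and by the triangle inequality $\sum a_j \geq \|u_{k_n}^{[\tau_n]} - u_{m_n}^{[\tau_n]}\|_\V$. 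Combining these gives
\[
    H_{\tau_n}\!\left(u_{m_n}^{[\tau_n]}\right) - H_{\tau_n}\!\left(u_{k_n}^{[\tau_n]}\right) \;\geq\; \frac{1-\lambda_n}{2(k_n-m_n)\tau_n}\left\|u_{k_n}^{[\tau_n]}-u_{m_n}^{[\tau_n]}\right\|_\V^2.
\]

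Finally I would pass to the limit $n \to \infty$. By Theorem~\ref{SDlimit}, $u_{m_n}^{[\tau_n]} \to \hat u(s)$ and $u_{k_n}^{[\tau_n]} \to \hat u(t)$ in $\V$, so Proposition~\ref{Htauprop} gives $H_{\tau_n}(u_{m_n}^{[\tau_n]}) \to \GL(\hat u(s))$ and $H_{\tau_n}(u_{k_n}^{[\tau_n]}) \to \GL(\hat u(t))$. Since $1-\lambda_n \to 1$ and $(k_n-m_n)\tau_n \to t-s$, the prefactor converges to $1/(2(t-s))$, and the norm term converges to $\|\hat u(s)-\hat u(t)\|_\V^2$ by continuity of the norm. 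This gives exactly \eqref{GLstep}, and since its right-hand side is non-negative, monotonicity of $\GL\circ\hat u$ follows immediately.

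I do not expect any serious obstacle; the only delicate point is making sure the two-sided limit of $H_{\tau_n}$ along the discrete iterates yields the continuum $\GL$ value at the limit point, but this is precisely the content of Proposition~\ref{Htauprop}. One should also note that the argument only requires the sequence $\tau_n$ supplied by Theorem~\ref{SDlimit}, not any subtler control on the $\beta_n^{[\tau_n]}$, since the inequality \eqref{Hstep} is $\beta$-free once phrased in terms of the $u_n^{[\tau_n]}$.
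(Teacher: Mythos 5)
Your proposal is correct and follows essentially the same route as the paper: telescope the single-step Lyapunov inequality \eqref{Hstep} for $H_{\tau_n}$ between the indices $\lceil s/\tau_n\rceil$ and $\lceil t/\tau_n\rceil$, bound the sum of squared increments below by $(k_n-m_n)^{-1}\|u_{k_n}^{[\tau_n]}-u_{m_n}^{[\tau_n]}\|_\V^2$, and pass to the limit using Theorem~\ref{SDlimit} and Proposition~\ref{Htauprop}. The only cosmetic difference is that the paper obtains the combinatorial lower bound from the vector identity \eqref{Plaw} applied to the increments, whereas you use scalar Cauchy--Schwarz on the norms followed by the triangle inequality; the two give the same estimate.
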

\begin{proof}
Let  $t > s \geq 0 $ and $m:= \ceil{s/\tau_n}$ and $\ell := \ceil{t/\tau_n}$. 
We note a simple fact about inner product spaces:\footnote{To verify this, simply expand the $||\cdot||_\V^2$ terms as inner products and collect terms.} for all sequences $v_n\in \V$,  \be \label{Plaw} \sum_{n=1}^N  \left|\left|v_n \right|\right|_\V^2  = \frac{1}{N} \left|\left|\sum_{n=1}^N v_n \right|\right|_\V^2 + \frac{1}{N}\sum_{k<n} \left|\left|v_n - v_k \right|\right|_\V^2   \geq \frac{1}{N} \left|\left|\sum_{n=1}^N v_n \right|\right|_\V^2 .%\tag{$*$} 
\ee
Now by \eqref{uhat}, we have $u_m^{[\tau_n]}\rightarrow \hat u(s)$ and $u_\ell^{[\tau_n]}\rightarrow \hat u(t)$. It follows that:
\begin{align*}
 \GL(\hat u(s)) -  \GL(\hat u(t)) &=  \lim_{n\rightarrow\infty} H_{\tau_n}\left(u_m^{[\tau_n]}\right) -  H_{\tau_n}\left(u_\ell^{[\tau_n]}\right)&&\text{ by Proposition \ref{Htauprop}}&\\
&\geq \lim_{n\rightarrow\infty} \frac{1}{2\tau_n}\left(1-\frac{\tau_n}{\varepsilon}\right) \sum_{k=m}^{\ell-1} \left|\left|u_{k+1}^{[\tau_n]}-u_k^{[\tau_n]}\right|\right|_\V^2 &&\text{ by \eqref{Hstep}}&\\
&\geq \lim_{n\rightarrow\infty}  \frac{1}{2\tau_n}\left(1-\frac{\tau_n}{\varepsilon}\right) \frac{1}{\ell-m}\left|\left|u_{\ell}^{[\tau_n]}-u_m^{[\tau_n]}\right|\right|_\V^2&&\text{ by \eqref{Plaw}}& \\
&=\frac{1}{2(t-s)} \left|\left|\hat u(s) -\hat u(t) \right|\right|_\V^2 \geq 0. &&&
\end{align*}
%as desired.
\end{proof}
\begin{nb}
Since $\GL(\hat u(s))-\GL(\hat u(t))\leq \GL(\hat u(s))\leq\GL(\hat u(0))$ it follows by \eqref{GLstep} that \[ \left|\left|\hat u(s) -\hat u(t) \right|\right|_\V\leq \sqrt{|t-s|}\sqrt{2\GL(\hat u(0))}\] which is an explicit $C^{0,1/2}$ condition for $\hat u$. 
\end{nb}
\section{Towards a link to mean curvature flow}\label{MCFsec}
Following \cite{vGGOB}, we wish to explore if the well-known continuum links between the MBO scheme, AC flow and mean curvature flow (MCF) extend to the graph setting. Towards this, in this section we first prove some relevant $\Gamma$-convergence results.\footnote{For detail on $\Gamma$-convergence, see e.g. \cite{Braides} and \cite{DalMaso}. } We then discuss remaining questions for future work on such a link. In particular, we consider the prospects of translating to the graph setting the proof of Chen and Elliott \cite{CE} of convergence of continuum double-obstacle AC flow to MCF.
\subsection{$\Gamma$-convergence results}
A positive answer to the question of linking the graph MBO scheme, AC flow and MCF has been suggested by $\Gamma$-convergence results linking the associated energies of graph AC flow \cite{vGB} and the MBO scheme \cite{OKMBO} to graph \emph{total variation} \[\TV(u) := \frac{1}{2}\sum_{i,j\in\V} \omega_{ij}|u_i-u_j|\] of which graph MCF is a type of descending flow.\footnote{See section \ref{MCFlink} for detail on definitions of graph MCF and their relation to TV.} We show analogous $\Gamma$-convergence results for the new functionals defined in this paper.  
Define the function on $\V_{[0,1]}$:
\[
f_0(u) :=\begin{cases} \frac{1}{2}\TV(u), &u\in\V_{[0,1]}\cap\V_{\{0,1\}},\\ 
\infty, &u\in\V_{[0,1]}\setminus \V_{\{0,1\}}.\end{cases}
\]
Then we have the following $\Gamma$-convergences.
\begin{thm}[Cf. $\text{\cite[Theorem 3.1]{vGB}}$]
The Ginzburg\textendash Landau functional $\GL$ with double-obstacle potential defined in \eqref{GL} 
	%\[f_\varepsilon(u):=\frac{1}{2}||\nabla u||_\mathcal{E}^2+\frac{1}{\varepsilon}\sum_{i\in V} u_i^2(u_i-1)^2 \]
	has $\Gamma$-limit in $\V_{[0,1]}$\emph{:}
	\[ 
	\Glim_{\varepsilon\downarrow 0}\GL = f_0.
	\]
\end{thm}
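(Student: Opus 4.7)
The plan is to verify the two standard conditions for $\Gamma$-convergence in the (finite-dimensional, hence particularly friendly) space $\V_{[0,1]}$ with the norm topology: (i) the liminf inequality, $\liminf_{\varepsilon\downarrow 0}\GL(u_\varepsilon)\geq f_0(u)$ for every $u_\varepsilon\to u$ in $\V_{[0,1]}$, and (ii) the existence of a recovery sequence $u_\varepsilon\to u$ with $\limsup_{\varepsilon\downarrow 0}\GL(u_\varepsilon)\leq f_0(u)$.

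The key observation that makes both halves essentially immediate is that, by design, the double-obstacle potential satisfies $W(x)=\tfrac12 x(1-x)\geq 0$ on $[0,1]$ with equality exactly when $x\in\{0,1\}$, and furthermore for $u\in\V_{\{0,1\}}$ we have $|u_i-u_j|=(u_i-u_j)^2$, so
\[
\tfrac12\|\nabla u\|_\mathcal{E}^2 \;=\; \tfrac{1}{4}\sum_{i,j\in V}\omega_{ij}(u_i-u_j)^2 \;=\; \tfrac{1}{4}\sum_{i,j\in V}\omega_{ij}|u_i-u_j| \;=\; \tfrac12\TV(u).
\]
This identifies the Dirichlet-energy part of $\GL$ with $f_0$ precisely on the limit set $\V_{\{0,1\}}$.

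For the recovery sequence, I take the constant sequence $u_\varepsilon:=u$. If $u\in\V_{\{0,1\}}$, then $\langle W\circ u,\mathbf{1}\rangle_\V=0$, so $\GL(u_\varepsilon)=\tfrac12\|\nabla u\|_\mathcal{E}^2=\tfrac12\TV(u)=f_0(u)$ for every $\varepsilon$. If $u\in\V_{[0,1]}\setminus\V_{\{0,1\}}$, then $f_0(u)=\infty$ and any sequence trivially satisfies the $\limsup$ bound.

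For the liminf inequality, split on the value of $f_0(u)$. When $u\in\V_{\{0,1\}}$, discard the non-negative potential term and use continuity of $u\mapsto\tfrac12\|\nabla u\|_\mathcal{E}^2$ on the finite-dimensional space $\V$ to conclude
\[
\liminf_{\varepsilon\downarrow 0}\GL(u_\varepsilon)\;\geq\;\liminf_{\varepsilon\downarrow 0}\tfrac12\|\nabla u_\varepsilon\|_\mathcal{E}^2\;=\;\tfrac12\|\nabla u\|_\mathcal{E}^2\;=\;\tfrac12\TV(u)=f_0(u).
\]
When $u\in\V_{[0,1]}\setminus\V_{\{0,1\}}$, fix $i\in V$ with $u_i\in(0,1)$. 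For all sufficiently small $\varepsilon$, $u_{\varepsilon,i}$ lies in a compact subinterval of $(0,1)$ on which $W$ is bounded below by some $c>0$, whence $\GL(u_\varepsilon)\geq c\,d_i^r/\varepsilon\to\infty=f_0(u)$.

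The only mild subtlety — rather than a real obstacle — is handling the case $u_\varepsilon\to u\in\V_{\{0,1\}}$ where some $u_{\varepsilon,i}$ may briefly leave $[0,1]$; but since we are working over $\V_{[0,1]}$ this does not arise, and otherwise we would note that $\GL(u_\varepsilon)=+\infty$ in that case, which makes the liminf inequality vacuous. So the proof is genuinely just the two bookkeeping steps above.
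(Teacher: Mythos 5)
Your proposal is correct and follows essentially the same route as the paper: the liminf inequality is obtained by splitting on whether $u$ is binary (using the $1/\varepsilon$ blow-up of the potential term when some $u_i\in(0,1)$, and continuity of the Dirichlet energy otherwise), and the recovery sequence is the constant sequence, with the identity $\tfrac12\|\nabla u\|_\mathcal{E}^2=\tfrac12\TV(u)$ on $\V_{\{0,1\}}$ closing the argument. No substantive differences from the paper's proof.
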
 
\begin{proof} The proof is more or less identical to its counterpart in \cite{vGB}. Let $u_\varepsilon\rightarrow u$ for $u_\varepsilon,u\in\V_{[0,1]}$. Suppose $u_i\in(0,1)$ for some $i\in V$, then eventually $(u_\varepsilon)_i\in (0,1)$ and $\GL(u_\varepsilon)\geq \frac{1}{2\varepsilon}d_i^{r} (u_\varepsilon)_i(1-(u_\varepsilon)_i) \rightarrow \infty$, so $f_0(u)\leq \liminf_{\varepsilon\rightarrow 0} \GL(u_\varepsilon)$. Now if $u\in\V_{\{0,1\}}$ then $f_0(u) = \frac{1}{2}\left|\left|\nabla u\right|\right|_\V^2 = \lim_{\varepsilon\rightarrow 0} \frac{1}{2}\left|\left|\nabla u_\varepsilon\right|\right|_\V^2\leq \liminf_{\varepsilon\rightarrow 0} \GL(u_\varepsilon)$. 

Now let $u\in\V_{[0,1]}$ and choose the recovery sequence $\bar u_\epsilon \equiv u$. If  $u_i\in(0,1)$ for some $i\in V$, then $\GL(u)\geq \frac{1}{2\varepsilon}d_i^{r} u_i(1-u_i) \rightarrow \infty$ so $f_0(u) = \lim_{\varepsilon\rightarrow 0} \GL(u)$. If  $u\in\V_{\{0,1\}}$ then $\GL(u) =  \frac{1}{2}\left|\left|\nabla u\right|\right|_\V^2 = f_0(u)$ so again  $f_0(u) = \lim_{\varepsilon\rightarrow 0} \GL(u)$.
\end{proof}
\begin{cor} \label{LyapGamma}The Lyapunov functional $H$ for the semi-discrete flow defined in \eqref{Lyap} has $\Gamma$-convergence in $\V_{[0,1]}$\emph{:}
	\[ 	\Glim_{\varepsilon\downarrow 0, \: 0< \tau\leq\varepsilon} \frac{1}{2\tau}H = f_0.
\]
\end{cor}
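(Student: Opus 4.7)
The strategy is to bootstrap from the preceding $\Gamma$-convergence theorem for $\GL$ using the decomposition given in the Note immediately preceding this corollary, namely
\[
\frac{1}{2\tau}H(u) = \GL(u) - \frac{\tau}{2}\ip{u,Q_\tau u},
\]
where $Q_\tau$ is defined by $e^{-\tau\Delta} = I - \tau\Delta + \tau^2 Q_\tau$. The central observation is that the remainder is controlled uniformly for $u\in\V_{[0,1]}$: using the eigenvalue bound from the Note together with $||u||_\V\leq ||\mathbf{1}||_\V$ on $\V_{[0,1]}$, we have
\[
\frac{\tau}{2}\left|\ip{u,Q_\tau u}\right| \leq \frac{\tau}{4}||\Delta||^2 \, ||\mathbf{1}||^2_\V.
\]
Since the constraint $0<\tau\leq\varepsilon$ forces $\tau\downarrow 0$ as $\varepsilon\downarrow 0$, this remainder vanishes uniformly on $\V_{[0,1]}$, so $\frac{1}{2\tau}H$ differs from $\GL$ by an $O(\tau)$ term in sup norm on $\V_{[0,1]}$.

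For the liminf inequality, I would take any sequence $(\varepsilon_n,\tau_n)$ with $\varepsilon_n\downarrow 0$ and $0<\tau_n\leq\varepsilon_n$, and any $u_n\to u$ in $\V_{[0,1]}$. Then
\[
\liminf_{n\to\infty}\frac{1}{2\tau_n}H(u_n) \geq \liminf_{n\to\infty}\GL(u_n) - \limsup_{n\to\infty}\frac{\tau_n}{2}\left|\ip{u_n,Q_{\tau_n}u_n}\right| \geq f_0(u),
\]
invoking the preceding theorem's liminf inequality for $\GL$ together with the uniform remainder bound.

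For the recovery sequence, I would simply take the constant sequence $\bar u_n\equiv u$, exactly as in the preceding theorem. Then
\[
\lim_{n\to\infty}\frac{1}{2\tau_n}H(u) = \lim_{n\to\infty}\GL(u) - \lim_{n\to\infty}\frac{\tau_n}{2}\ip{u,Q_{\tau_n}u} = f_0(u),
\]
where the first limit equals $f_0(u)$ by the recovery portion of the preceding theorem (which holds both when $u\in\V_{\{0,1\}}$, in which case $\GL(u)=f_0(u)$ is constant in $\varepsilon$, and when $u\in\V_{[0,1]}\setminus\V_{\{0,1\}}$, in which case both $\GL(u)$ and $f_0(u)$ diverge to $\infty$), and the second vanishes by the uniform remainder bound.

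There is essentially no substantive obstacle here: the corollary is a transparent consequence of the preceding $\Gamma$-convergence combined with the uniform $O(\tau)$ estimate for $\frac{1}{2\tau}H - \GL$ on $\V_{[0,1]}$. The only minor care required is in handling the two-parameter indexing $(\varepsilon,\tau)$, but the $u$-uniformity of the remainder bound allows the same constant recovery sequence to work simultaneously for both parameters.
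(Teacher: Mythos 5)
Your proposal is correct and follows essentially the same route as the paper: the same decomposition $\frac{1}{2\tau}H(u) = \GL(u) - \frac{1}{2}\tau\ip{u,Q_\tau u}$, the same uniform eigenvalue bound showing the remainder vanishes on $\V_{[0,1]}$, and the same transfer of the liminf inequality and (constant) recovery sequence from the $\Gamma$-convergence of $\GL$. No substantive differences to note.
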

	\begin{proof}
		Recall from \eqref{SDLyap} that \[ 	\frac{1}{2\tau}H(u) = \GL(u) - \frac{1}{2}\tau\ip{u,Q_\tau u}\] where $Q_\tau = \tau^{-2}(e^{-\tau\Delta}-I+\tau\Delta)$.
Now $Q_\tau$ has eigenvalues $\tau^{-2}(e^{-\tau\lambda_k}-1+\tau\lambda_k)\leq\frac{1}{2}\lambda_k^2$, for $\lambda_k$ the eigenvalues of $\Delta$, and so $\ip{u,Q_\tau u}\leq \frac{1}{2}||\Delta||^2||u||_\V^2$  is bounded in $\V_{[0,1]}$ uniformly in $\tau$. It follows that if $\varepsilon_j\downarrow 0$, $0< \tau_j\leq\varepsilon_j$ and $ u_j \rightarrow u$ in $\V_{[0,1]}$ then $\tau_j\ip{u_j,Q_{\tau_j} u_j}\rightarrow 0$. 

We derive the $\Gamma$-convergence of $\frac{1}{2\tau}H$ from that of $\GL$. Consider the lim-inf inequality:
\begin{equation*}\begin{split}
	f_0(u) &\leq \liminf_j \text{GL}_{\varepsilon_j}(u_j) \:\:\:\text{ (by the $\Gamma$-convergence of $\GL$ to $f_0$)}\\
		   &=    \liminf_j \text{GL}_{\varepsilon_j}(u_j) + \liminf_j \left(- \frac{1}{2}\tau_j\ip{ u_j,Q_{\tau_j} u_j}\right) \leq \liminf_j \frac{1}{2\tau_j}H(u_j).
\end{split} \end{equation*}
Next take $\bar u_j \rightarrow u$ a recovery sequence for $\GL$ to $f_0$:
\[f_0(u) =\lim_{j\rightarrow\infty} \text{GL}_{\varepsilon_j}(\bar u_j) = \lim_{j\rightarrow\infty} \left(\text{GL}_{\varepsilon_j}( \bar u_j) - \frac{1}{2}\tau_j\ip{ \bar u_j,Q_{\tau_j} \bar u_j}\right)=\lim_{j\rightarrow\infty} \frac{1}{2\tau_j}H(\bar u_j) \] and this proves the  $\Gamma$-convergence of $\frac{1}{2\tau}H$ to $f_0$.
	\end{proof}
\begin{nb}
Taking $\tau =\varepsilon$ and considering $J(u) := \ip{1-u,e^{-\tau\Delta}u}$, the Lyapunov functional for the MBO scheme \emph{(}see \emph{\cite[Proposition 4.6]{vGGOB})}, we have that $H = J$ and so in $\V_{[0,1]}$\emph{:}
\begin{equation*}
	\Glim_{\tau\downarrow 0} \frac{1}{\tau}J|_{\V_{[0,1]}} = 2f_0.
\end{equation*}
This is a special case of the result of \emph{\cite[Theorem 5.10]{OKMBO}}.
\end{nb}
\subsection{Towards a rigorous link}\label{MCFlink} We now discuss some directions for future research towards rigorously linking graph MCF to the MBO scheme and AC flow.
 
A natural research direction would be to translate the continuum proofs of the links to MCF into the graph setting. In \cite{CE}, Chen and Elliott prove that continuum double obstacle Allen--Cahn flow converges to MCF. Encouragingly, the key lemmas for this proof are a pair of comparison principles that in this paper we translate to the graph setting as Theorems \ref{cp2} and \ref{cp1}. Briefly, Chen and Elliott's proof runs by considering double-obstacle AC flow with $u(x,0) = 1$ ``inside" a manifold $\Gamma(0)$ and $u(x,0)=0$ ``outside" $\Gamma(0)$, except for an $\bigO(\varepsilon)$ interfacial region. Then\break \cite[Theorem 3.1]{CE} states that $u(x,t) = 1$ inside $\Gamma(t)$ and $0$ outside $\Gamma(t)$, except for an $\bigO(\varepsilon)$ interfacial region, where $\Gamma(t)$ is the MCF evolution of $\Gamma(0)$. They then extend this result to more general initial conditions. However, this proof does not obviously translate to the graph context. In particular, \cite{CE} uses facts about the signed distance function $d(x,t):=\text{sdist}(x,\Gamma(t))$ such as \begin{align*}&|\nabla d(x,t)|=  1 \text { a.e.}&  \text{and}&  &d_t -\Delta d = 0 \text{ on }\Gamma(t)&  \end{align*} which do not have obviously useful analogues in the graph setting. A topic for future research will be if \cite[Theorem 3.1]{CE} can be translated to the graph setting using Theorems \ref{cp2} and \ref{cp1}.

A further complication arises in how exactly to define MCF on a graph. In \cite{vGGOB}, motivated by the variational formulation of MCF in \cite{ATW}, graph MCF was defined as the minimisation scheme
\[
S_{n+1} \in \argmin_{S\subseteq V} \TV(\chi_S) + \frac{1}{\delta t}\ip{\chi_S-\chi_{S_n},(\chi_S-\chi_{S_n})d^{\Sigma_n}}
\]
where $\Sigma_n :=\{ i\in V\mid \exists j\in V\text{ s.t. }\omega_{ij}>0\text{ and }\chi_{S_n}(i)\neq \chi_{S_n}(j)\}$ is the \emph{boundary} of $S_n$, and $d_i^{\Sigma_n}$ is the graph distance from $i$ to $\Sigma_n$. But it is relatively easy to come up with graphs upon which this will behave very differently to any diffusion-based flow. Let $G=(V,E)$ be a graph as in section \ref{groundwork}, and for $\epsilon>0$ let $G^\epsilon=(V,V^2)$ be the complete graph defined by $\omega^\epsilon_{ij} := \omega_{ij}$ if $ij\in E$ and $\omega^\epsilon_{ij} := \epsilon$ otherwise. Then $\Delta^\epsilon =\Delta +\bigO(\epsilon)$, so for $\epsilon$ sufficiently small the AC flow and MBO scheme will be essentially the same on $G$ and $G^\epsilon$. But if $S_n\notin\{\emptyset,V\}$, then by definition $\Sigma_n^\epsilon = V$ and so $d_i^{\Sigma^\epsilon_n}=\mathbf{0}$, so on $G^\epsilon$ we only get solutions $S_{n+1}^\epsilon \in \{\emptyset,V\}$ which in general will be very different from the behaviour on $G$. 

Towards resolving this issue, we recall the fact from \cite{vGGOB} that %the Lyapunov functional $J$ for the MBO scheme has the property that  
\[
J(\chi_S) =\ip{\mathbf{1}-\chi_S,e^{-\tau\Delta}\chi_S} = \tau \TV(\chi_S) + R_S(\tau)
\]
where 
$R_S(\tau):= \sum_{k\geq 2} (-1)^k \frac{\tau^k}{k!} \ip{\chi_{S^c}, \Delta^k \chi_S}.$
Then the MBO update of $S_n\subseteq V$ obeys
\begin{align*}
S_{n+1} \in \argmin_{S\subseteq V}\: \,&\ip{\mathbf{1}-2e^{-\tau\Delta}\chi_{S_n},\chi_S} \\
&= J(\chi_S) + \ip{\chi_S-\chi_{S_n},e^{-\tau\Delta}(\chi_S-\chi_{S_n})} - \ip{\chi_{S_n},e^{-\tau\Delta}\chi_{S_n}}\\
&\simeq \TV(\chi_S) + \frac{R_S(\tau)+||e^{-\frac{1}{2}\tau\Delta}(\chi_S-\chi_{S_n})||_\V^2}{\tau}.
\end{align*}
Since $R_S(\tau)/\tau = \bigO(\tau)$, this suggests that the following definition of graph MCF:
\be\label{newMCF}
S_{n+1} \in \argmin_{S\subseteq V} \TV(\chi_S) + \frac{||e^{-\frac{1}{2}\tau \Delta}(\chi_S-\chi_{S_n})||_\V^2}{\tau}
\ee
might strongly resemble the MBO scheme. We note immediately that \eqref{newMCF} avoids the problem described in the previous paragraph. A topic for future research will be verifying that \eqref{newMCF} does indeed resemble the MBO scheme, and that \eqref{newMCF} deserves to be called a mean curvature flow.
\begin{nb} We note briefly an alternative approach to defining MCF on a graph, that of Elmoataz and co-authors, which has thus far fallen beyond the scope of our research. Motivated by the well-known level-set formulation of continuum MCF, in \emph{\cite{Elmo2014}} El Chakik, Elmoataz, and Desquesnes define the graph mean curvature\footnote{For $u=\chi_S$ we note a similarity between this definition and \cite[Definition 3.2]{vGGOB}.} and up/downwind gradient norms of $u\in \V$ by\footnote{We adapt their notation to align with that of this paper. Recall that $(\nabla u)_{ij} := u_j - u_i$ for $i$ and $j$ neighbours.} 
\begin{align*}
	&K_i(u) := \sum_{j\in V} \frac{\omega_{ij}}{d_i} \sgn{(\nabla u)}_{ij}, & \hspace{-0.7em}\left|\left|(\nabla^{\pm}u)_i\right|\right|^p_p :=\sum_{j\in V} \omega_{ij}\left|{(\nabla u)}_{ij}^\pm\right|^p, & \left|\left|(\nabla^{\pm}u)_i\right|\right|_\infty := \max_{j\in V} \omega_{ij} \left| {(\nabla u)}_{ij}^\pm\right|,
\end{align*}
where superscript $\pm$ denote positive/negative parts, and define graph MCF as the ODE flow:
\be\label{ElmoMCF}
\frac{du_i}{dt} = K_i^+(u(t))\left|\left|(\nabla^{+}u(t))_i\right|\right|_p - K_i^-(u(t))\left|\left|(\nabla^{-}u(t))_{i}\right|\right|_p
\ee
 for $p\in [1,\infty]$. We note that, for $r=1$, it is easy to see that $-K(u)$ is the first variation of TV at $u$, and that therefore \eqref{ElmoMCF} monotonically decreases TV along trajectories.\footnote{We leave the details of this demonstration as an exercise for the reader.} A topic for future research will be whether \eqref{ElmoMCF} describes the same dynamics as schemes like \eqref{newMCF}, and if this ODE perspective is more fruitful for linking to AC flow or to the MBO scheme.
\end{nb}

\section{Conclusions}

Following Blowey and Elliott \cite{BE1991,BE1992,BE1993}, we have defined a graph Allen--Cahn equation using a double-obstacle potential, and proved well-posedness and Lipschitz regularity of solutions, and demonstrated that the graph MBO scheme is a special case of a ``semi-discrete'' scheme for this equation. We exhibited a Lyapunov functional for this scheme, and via this performed an analysis of the long-time behaviour similar to Luo and Bertozzi \cite{LB2016}. Furthermore, we proved that for any $\tau_n\downarrow 0$ there is a subsequence along which the semi-discrete iterates $(u_m,\beta_m)$, with initial state $u_0$ and step $\tau_n$, converge a.e. to the Allen--Cahn trajectory with $u(0)=u_0$, and in particular the $u_m$ converge pointwise without passing to a subsequence.

 Towards a link to mean curvature flow, we have proved some promising $\Gamma$-convergences, and translated two comparison principles\textemdash used by Chen and Elliott to show convergence of continuum Allen--Cahn flow to mean curvature flow in \cite{CE}\textemdash into the graph setting. We have also discussed some topics for future work in this direction.%The second of these principles furthermore proved uniqueness of graph Allen--Cahn trajectories.

In future work we will continue investigating links to mean curvature flow, in particular seeking a representation of mean curvature flow on a graph that will allow us to derive a concrete link between Allen--Cahn/MBO and mean curvature flow on graphs. Currently ongoing work by the authors investigates the semi-discrete link between the Allen--Cahn flow and MBO scheme under further constraints, in particular \cite{BuddvG} and \cite{Buddfidelity} showing respectively that the link continues to hold under mass-conservation and fidelity-forcing constraints. We furthermore suspect that the semi-discrete link also holds in the continuum, which may prove a topic for future research.

\section*{Acknowledgements}

%This project has received funding from the European Union’s Horizon 2020 research and innovation programme under the Marie Skłodowska-Curie grant agreement No 777826.

Much of the work for this project was performed whilst both authors were employed by the University of Nottingham.

\appendix

\section{Proof of Theorem \ref{existence}}\label{existsec}
Recall that given initial condition $u(0) = u_0\in\V_{[0,1]}$, we seek to find  $(u,\beta)\in\V_{[0,1],t\in [0,\infty)}\times\V_{t\in  [0,\infty)}$ with $u\in H_{loc}^1( [0,\infty);\V)\cap C^0( [0,\infty);\V)$ that solves for a.e. $t\geq 0$
\begin{align}
\label{ACobs2aa}
	& \frac{du(t)}{dt} = - \Delta u(t) + \frac{1}{\varepsilon}\left(u(t) -\frac{1}{2}\mathbf{1}\right) +\frac{1}{\varepsilon} \beta(t), &\beta(t)\in\mathcal{B}(u(t)).
\end{align}
Let $\nu>0$. Define the following $C^1$ approximation to the double obstacle potential 
\be W_\nu(x) = \begin{cases}\frac{1}{4\nu}x^2 +\frac{1}{2}x, & x<0\\
				\frac{1}{2}x(1-x), & 0\leq x \leq 1 \\
				\frac{1}{4\nu}(x-1)^2 -\frac{1}{2}(x-1), &x>1
	\end{cases} \ee
with derivative 
\be \label{Wnudiff} W'_\nu(x) = \begin{cases}\frac{1}{2\nu}x +\frac{1}{2}, & x<0,\\
				\frac{1}{2}-x, & 0\leq x \leq 1, \\
				\frac{1}{2\nu}(x-1) -\frac{1}{2}, &x>1.
	\end{cases} \ee
Note that $W_\nu$ is a double-well potential with wells of depth $-\nu/4$ at $-\nu$ and $1+\nu$. Then we define the corresponding AC: 
\be
\label{ACnu} 
\frac{du_\nu(t)}{dt} = - \Delta u_\nu(t) - \frac{1}{\varepsilon}W'_\nu\circ u_\nu(t)
\ee
with $u_\nu(0) = u_0\in\V_{[0,1]}$. We note the following facts.
\begin{prop} For $u_\nu$ solving \eqref{ACnu} with with $u_\nu(0) \in\V_{[0,1]}$\emph{:}
\begin{enumerate}
\item $u_\nu \in C^1( [0,\infty);\V)$ exists.
\item $u_\nu \in \V_{[-\nu,1+\nu],t\in[0,\infty)}$.
\end{enumerate}
\end{prop}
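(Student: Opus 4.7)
For part (1), the plan is to appeal to Picard--Lindel\"of. From the explicit formula \eqref{Wnudiff} one sees that $W'_\nu:\mathbb{R}\to\mathbb{R}$ is continuous and piecewise linear with slopes bounded by $\max\{1,1/(2\nu)\}$, hence globally Lipschitz. So the map $u\mapsto -\Delta u -\frac{1}{\varepsilon}W'_\nu\circ u$ is globally Lipschitz from $\V$ to $\V$ (as $\Delta$ is linear and $W'_\nu$ acts componentwise). Standard ODE theory on the finite-dimensional Hilbert space $\V$ then gives a unique $C^1$ solution $u_\nu$ on any bounded interval, and the uniform Lipschitz bound precludes finite-time blow-up, so the solution extends to $[0,\infty)$.

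For part (2), my plan is a Lyapunov/maximum principle argument on the energy
\[\psi(t) := \bigl\langle (u_\nu(t) -(1+\nu)\mathbf{1})_+,(u_\nu(t)-(1+\nu)\mathbf{1})_+\bigr\rangle_\V,\]
where $(\cdot)_+$ denotes the positive part applied componentwise. Since $u_\nu(0)\in\V_{[0,1]}$ we have $\psi(0)=0$. Differentiating (using $C^1$ regularity from (1)) and denoting $a:=u_\nu(t)-(1+\nu)\mathbf{1}$, one gets
\[\tfrac{1}{2}\tfrac{d\psi}{dt} = \bigl\langle a_+,-\Delta u_\nu -\tfrac{1}{\varepsilon}W'_\nu\circ u_\nu\bigr\rangle_\V.\]
The reaction term is non-positive componentwise: at vertices where $a_i\leq 0$ the factor $(a_+)_i$ vanishes, while at vertices where $a_i>0$ we have $u_\nu(t)_i>1+\nu$ and \eqref{Wnudiff} gives $W'_\nu(u_\nu(t)_i)=\tfrac{1}{2\nu}(u_\nu(t)_i-1)-\tfrac{1}{2}>0$. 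For the diffusion term, using $\Delta\mathbf{1}=\mathbf{0}$ and the graph integration-by-parts $\langle f,\Delta g\rangle_\V=\langle\nabla f,\nabla g\rangle_\mathcal{E}$, one rewrites
\[\langle a_+,-\Delta u_\nu\rangle_\V = -\langle \nabla a_+,\nabla a\rangle_\mathcal{E}= -\tfrac{1}{2}\sum_{i,j}\omega_{ij}\bigl((a_+)_j-(a_+)_i\bigr)(a_j-a_i),\]
and the elementary inequality $(y_+-x_+)(y-x)\geq 0$ for all $x,y\in\mathbb{R}$ (easily verified by case analysis) shows this term is non-positive. Thus $\psi'(t)\leq 0$, forcing $\psi\equiv 0$ and hence $u_\nu(t)_i\leq 1+\nu$ for every $i\in V$ and $t\geq 0$. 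A symmetric argument applied to $\bigl\langle(-\nu\mathbf{1}-u_\nu(t))_+,(-\nu\mathbf{1}-u_\nu(t))_+\bigr\rangle_\V$, using that $W'_\nu(x)<0$ for $x<-\nu$, gives the lower bound $u_\nu(t)_i\geq -\nu$.

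The only subtle step I expect is the sign of the diffusion term, since one must handle vertices $i,j$ where $a_i$ and $a_j$ have different signs; this is exactly where the pointwise inequality $(y_+-x_+)(y-x)\geq 0$ is essential. Everything else is routine ODE theory and the fact that $\pm\nu$, $1\pm\nu$ are placed precisely at zeros (with the correct sign on either side) of $W'_\nu$.
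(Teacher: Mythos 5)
Your proof is correct. Part (1) is exactly the paper's argument: $W'_\nu$ is piecewise linear hence globally Lipschitz, so Picard--Lindel\"of gives global existence of a $C^1$ solution. For part (2) you take a genuinely different route. The paper argues pointwise by contradiction: it supposes some vertex value drops below $-\nu$, picks the vertex attaining the overall minimum over $[0,T]$ and a time at which that minimum is attained, and observes that there $(\Delta u_\nu)_i\le 0$ and $W'_\nu((u_\nu)_i)<0$, so $\frac{d(u_\nu)_i}{dt}>0$, contradicting minimality of that time. Your argument instead runs a Gr\"onwall-free energy estimate on $\psi(t)=\left\|\left(u_\nu(t)-(1+\nu)\mathbf{1}\right)_+\right\|_\V^2$, using the graph integration by parts $\left\langle f,\Delta g\right\rangle_\V=\left\langle\nabla f,\nabla g\right\rangle_\mathcal{E}$ together with the monotonicity inequality $(y_+-x_+)(y-x)\ge 0$; all the steps check out (the sign of $W'_\nu$ outside $[-\nu,1+\nu]$, the differentiability of $x\mapsto (x_+)^2$, and the sign of the Dirichlet term). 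This is precisely the machinery the paper itself deploys in its comparison principles (Proposition \ref{usefullem} and Theorem \ref{cp2}), so your proof is arguably more uniform with the rest of the paper and generalises more readily (e.g.\ to comparison of two solutions), at the cost of invoking integration by parts; the paper's extremal-vertex argument is more elementary but requires the slightly delicate choice of the minimising vertex and first attainment time. Both are valid proofs of the stated invariance.
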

\begin{proof} We employ standard arguments.
\begin{enumerate}
\item $W'_\nu$ is piecewise linear, so is Lipschitz. Hence the RHS of \eqref{ACnu} is Lipschitz in $u_\nu$, so existence of a $C^1$ solution on $[0,\infty)$ follows by the Picard--Lindel\"{o}f Theorem. 
\item %Note that $u_0\in \V_{[-\nu,1+\nu]}$ and $-\nu,\:1+\nu$ are the locations of the wells of $W_\nu$. 
Suppose $\exists T>0, k \in V$ such that $(u_\nu(T))_k<-\nu$. On $[0,T]$ each $(u_\nu)_i$ is continuous so attains its lower bound. As $V$ is finite we may choose the $i\in V$ with lowest such bound, which by assumption is less than $-\nu$, and let $t\in [0,T]$ be a time this bound is attained. Then $(u_\nu(t))_i<-\nu$, and for all $j\in V$, $(u_\nu(t))_j\geq(u_\nu(t))_i$, so 
\[\frac{d(u_\nu)_i}{dt}(t) = - (\Delta u_\nu(t))_i - \frac{1}{\varepsilon}W'_\nu((u_\nu(t))_i)> - (\Delta u_\nu(t))_i \geq 0\] since $(\Delta u_\nu(t))_i \leq 0$ when $i$ minimises $u_\nu(t)$. If $t>0$ then we have $0<t'<t$ such that \[\frac{(u_\nu(t))_i-(u_\nu(t'))_i}{t-t'}\geq \frac{1}{2}\frac{d(u_\nu)_i}{dt}(t) >0 \] so $(u_\nu(t'))_i<(u_\nu(t))_i$ contradicting the minimality of $t$. If $t=0$, $(u_\nu(t))_i=(u_0)_i\geq-\nu$. So we get a contradiction in either case. Likewise for the upper bound.
\end{enumerate}%\vspace*{-2.0em}
\end{proof}
\begin{lem}
If $u\in C^{0,1}([0,\infty),\V)$ then $u\in  H_{loc}^1( [0,\infty);\V)$. 
\end{lem}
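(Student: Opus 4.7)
The plan is to reduce to the scalar case using Proposition \ref{graphH1} and then invoke the standard fact that a scalar Lipschitz function is absolutely continuous with bounded classical derivative a.e., so it lies in $H^1$ on any bounded interval.

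First I would fix $a, b$ with $0 \le a < b < \infty$ and show $u \in H^1((a,b); \V)$. By Proposition \ref{graphH1}, this amounts to checking $u_i \in H^1((a,b); \mathbb{R})$ for every $i \in V$. Since $u \in C^{0,1}([0,\infty); \V)$, there is a constant $L$ such that $\|u(t) - u(s)\|_\V \le L|t-s|$ for all $s, t \ge 0$; because $|u_i(t) - u_i(s)| \le d_i^{-r/2} \|u(t) - u(s)\|_\V$, each coordinate $u_i$ is Lipschitz on $[0,\infty)$.

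Next, a Lipschitz function on $[a,b]$ is absolutely continuous, and hence (cf.\ \cite[Theorem 7.13]{Leoni} as used already in Lemma \ref{H1AClem}) differentiable a.e.\ with classical derivative equal to its distributional derivative. The Lipschitz bound gives $|du_i/dt| \le L\,d_i^{-r/2}$ a.e., so $du_i/dt \in L^\infty((a,b); \mathbb{R}) \subseteq L^2((a,b); \mathbb{R})$; likewise $u_i$ is continuous and hence bounded on $[a,b]$, so $u_i \in L^2((a,b); \mathbb{R})$. Thus $u_i \in H^1((a,b); \mathbb{R})$ for every $i$, and Proposition \ref{graphH1} yields $u \in H^1((a,b); \V)$. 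Since $a, b$ were arbitrary, $u \in H^1_{loc}([0,\infty); \V)$.

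There is no real obstacle here beyond the bookkeeping of passing between the scalar $H^1$ theory and the vector-valued space $H^1(T; \V)$; this is precisely what Proposition \ref{graphH1} was set up to handle, so the argument is essentially immediate.
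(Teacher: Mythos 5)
Your proof is correct and follows essentially the same route as the paper: reduce to the scalar coordinates via Proposition \ref{graphH1} and invoke the standard fact (cf.\ \cite[Theorem 7.13]{Leoni}) that scalar Lipschitz functions are locally $H^1$. You simply spell out a bit more of the bookkeeping (the coordinatewise Lipschitz bound and the $L^2$ estimates) that the paper leaves implicit.
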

\begin{proof}
As $u$ is Lipschitz, for all $i\in V$ we have $u_i\in C^{0,1}([0,\infty);\mathbb{R})$, so by \cite[Theorem 7.13]{Leoni} $u_i\in H^1_{loc}([0,\infty);\mathbb{R})$. Then we recall from Proposition \ref{graphH1} that $u\in H^1_{loc}(T;\V)$ if and only if for all $i\in V$, $u_i\in H^1_{loc}(T;\mathbb{R})$. 
\end{proof}
We next demonstrate the following convergences.
\begin{lem}\label{convergenu} For any sequence $\hat\nu_n\downarrow 0$ there exists a subsequence $\nu_n$ and a Lipschitz function $u\in\V_{[0,1],t\in [0,\infty)}\cap H^1_{loc}( [0,\infty);\V)$ such that for all compact intervals $T \subseteq [0,\infty)$, $u_{\nu_n}|_T \rightarrow u|_T$ uniformly, where $u_{\nu_n}$ are $C^1$ solutions to \eqref{ACnu}. Furthermore the derivatives $\frac{du_{\nu_n}}{dt}$ converge weakly to $ \frac{du}{dt}$  in $L^2_{loc}$, where $\frac{du}{dt}$ is the weak derivative of $u$.
\end{lem}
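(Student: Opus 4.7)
The plan is to apply an Arzel\`a--Ascoli plus weak compactness argument. The essential ingredients will be uniform boundedness and uniform Lipschitz control of the approximating trajectories $u_{\hat\nu_n}$, together with a diagonal extraction to upgrade compact-interval convergence into a statement on all of $[0,\infty)$.

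First I would establish uniform $L^\infty$ bounds. By the previous proposition each $u_{\hat\nu_n}$ takes values in $\V_{[-\hat\nu_n,1+\hat\nu_n]}$, and since $\hat\nu_n\downarrow 0$ these ranges are all contained in, say, $\V_{[-1,2]}$ for $n$ large. Consequently $\|u_{\hat\nu_n}(t)\|_\V$ is bounded independently of $n$ and $t$. Next I would derive a uniform Lipschitz bound. From \eqref{ACnu},
\[
\frac{du_{\hat\nu_n}}{dt} = -\Delta u_{\hat\nu_n}(t) - \frac{1}{\varepsilon}W'_{\hat\nu_n}\circ u_{\hat\nu_n}(t).
\]
The first term is bounded by $\|\Delta\|$ times the uniform $L^\infty$ bound just obtained. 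For the second, inspecting \eqref{Wnudiff}, on the interval $[-\hat\nu_n,1+\hat\nu_n]$ the derivative $W'_{\hat\nu_n}$ takes values in $[-\tfrac12,\tfrac12]$, independently of $\hat\nu_n$. Hence $\|du_{\hat\nu_n}/dt\|_\V$ is bounded by a constant $M$ independent of $n$ and $t$, so each $u_{\hat\nu_n}$ is $M$-Lipschitz on $[0,\infty)$ into $\V$.

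Now fix an increasing sequence of compact intervals $T_k=[0,k]$ exhausting $[0,\infty)$. On $T_1$, the family $\{u_{\hat\nu_n}|_{T_1}\}$ is uniformly bounded and uniformly equicontinuous, and $\V$ is finite-dimensional, so Arzel\`a--Ascoli yields a subsequence converging uniformly on $T_1$ to some limit. Iterating on $T_2, T_3,\dots$ and taking a diagonal subsequence $\nu_n$ of $\hat\nu_n$, I obtain $u_{\nu_n}\to u$ uniformly on every compact $T\subseteq[0,\infty)$. The uniform Lipschitz bound passes to $u$, and since $u_{\nu_n}\in\V_{[-\nu_n,1+\nu_n]}$ with $\nu_n\downarrow 0$, the pointwise (hence uniform) limit satisfies $u\in\V_{[0,1],t\in[0,\infty)}$. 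The Lipschitz property gives $u\in C^{0,1}([0,\infty);\V)$, and hence by the preceding lemma $u\in H^1_{loc}([0,\infty);\V)$.

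Finally, I would handle the weak convergence of the derivatives. The uniform bound on $du_{\hat\nu_n}/dt$ in $\V$ gives, on each $T_k$, a uniform bound in $L^2(T_k;\V)$. By the Banach--Alaoglu theorem on this Hilbert space, and another diagonal extraction (refining $\nu_n$ once more if necessary), I get a subsequence with $du_{\nu_n}/dt\rightharpoonup v$ in $L^2_{loc}([0,\infty);\V)$ for some $v$. To identify $v=du/dt$, I test against an arbitrary $\varphi\in C^\infty_c((0,\infty);\V)$: by the weak convergence and the classical integration-by-parts available for $C^1$ functions,
\[
\Bigl(v,\varphi\Bigr)_{t\in T} = \lim_{n\to\infty}\Bigl(\tfrac{du_{\nu_n}}{dt},\varphi\Bigr)_{t\in T} = -\lim_{n\to\infty}\Bigl(u_{\nu_n},\tfrac{d\varphi}{dt}\Bigr)_{t\in T} = -\Bigl(u,\tfrac{d\varphi}{dt}\Bigr)_{t\in T},
\]
where the last equality uses the uniform convergence $u_{\nu_n}\to u$ on $\operatorname{supp}\varphi$. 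Hence $v$ is indeed the weak derivative of $u$, completing the proof.

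The main obstacle, and really the only nontrivial step, is securing the uniform Lipschitz bound: it hinges on the crucial observation that although $W'_{\hat\nu_n}$ blows up in slope as $\hat\nu_n\downarrow 0$, the invariance of $u_{\hat\nu_n}$ in $[-\hat\nu_n,1+\hat\nu_n]$ keeps $W'_{\hat\nu_n}\circ u_{\hat\nu_n}$ uniformly bounded in $[-\tfrac12,\tfrac12]$. Everything else is a standard Arzel\`a--Ascoli plus Banach--Alaoglu plus diagonal-extraction package.
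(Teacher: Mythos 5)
Your proposal is correct and follows essentially the same route as the paper's proof: uniform boundedness from the invariance of $[-\nu,1+\nu]$, a uniform Lipschitz bound via the uniform bound $W'_\nu\circ u_\nu\in\V_{[-1/2,1/2]}$, Arzel\`a--Ascoli with a diagonal extraction over $[0,k]$, and Banach--Alaoglu plus the same integration-by-parts identification of the weak limit of the derivatives. No substantive differences.
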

\begin{proof} We use the Arzela--Ascoli theorem. WLOG we may take $\hat\nu_n\leq 1$ and so we have $u_{\hat\nu_n} \in \V_{[-1,2],t\in[0,\infty)}$, thus the $u_{\hat\nu_n}$ are uniformly bounded in $t$. Next, $\Delta$ is a bounded linear map so the $\Delta u_{\hat\nu_n}$ are uniformly bounded in $n$ and $t$, and from \eqref{Wnudiff} $W'_\nu\circ u_{\hat\nu_n}(t)\in\V_{[-1/2,1/2]}$. Hence by \eqref{ACnu} $\frac{du_{\hat\nu_n}}{dt}$ is uniformly bounded in $n$ and $t$, and so for any $t_1\leq t_2$,
\be\label{equiLips}||u_{\hat\nu_n}(t_1)-u_{\hat\nu_n}(t_2)||_\V \leq \int_{t_1}^{t_2} \left|\left|\frac{du_{\hat\nu_n}}{dt}(t)\right|\right|_\V\; dt\leq C|t_1-t_2|\ee where $C$ is independent of $n$, $t_1$ and $t_2$. Hence the $u_{\hat\nu_n}$ are uniformly bounded and equicontinuous on every compact interval. 

We define $\nu_n$ by a diagonal argument. Take $T=[0,1]$, then by  Arzela--Ascoli we have some $\nu_n^{(1)}$, a subsequence of $\hat\nu_n$, such that $u_{\nu^{(1)}_n}|_{[0,1]}$ converges uniformly. We then iterate: take $T=[0,k+1]$, then by  Arzela--Ascoli we have some $\nu_n^{(k+1)}$, a subsequence of $\nu_n^{(k)}$, such that $u_{\nu^{(k+1)}_n}|_{[0,k+1]}$ converges uniformly. Finally, let $\nu_n := \nu_n^{(n)}$. Then $\nu_n$ is eventually a subsequence of each $\nu_n^{(k)}$, so we have $u:[0,\infty)\rightarrow\V$ well-defined by: $u|_{[0,N]}:=$ the uniform limit of $u_{\nu_n}|_{[0,N]}$, for all $N\in\mathbb{N}$. For all $i\in V$, $t\in [0,\infty)$ and $n\in\mathbb{N}$ we have $(u_{\nu_n}(t))_i \in[-\nu_n, 1+\nu_n]$, and so taking $n\rightarrow \infty$, $u_i(t)\in[0, 1]$. Thus $u\in \V_{[0,1],t\in[0,\infty)}$. 
%Finally, we show regularity of $u$ and weak convergence of $\frac{du_{\nu_n}}{dt}$ up to a subsequence of $\nu_n$. 
Taking limits as $n\rightarrow\infty$ in \eqref{equiLips}: 
\[\forall t,s\geq 0 \:\:\: \left|\left|u(t)-u(s)\right|\right|_\V \leq C|t-s|\]
hence $u$ is Lipschitz on $[0,\infty)$ and therefore is $H^1_{loc}([0,\infty);\V)$ by the above lemma. 

Let $T\subseteq[0,\infty)$ be any bounded interval. As the $\frac{du_{\nu_n}}{dt}$ are uniformly bounded in $n$ and $t$ they lie in a closed bounded ball in $L^2(T;\V)$, which by the Banach--Alaoglu theorem is weak-compact. So $\frac{du_{\nu_n}}{dt}$ weakly converges on $T$ up to a subsequence of $\nu_n$. By a diagonal argument as above, covering $[0,\infty)$ by countably many such $T$s, we get weak convergence on $[0,\infty)$ up to a subsequence of $\nu_n$.  We redefine $\nu_n$ to be this new subsequence. 

Denote the weak limit of  $\frac{du_{\nu_n}}{dt}$ by $g$. %As the $\frac{du_{\nu_n}}{dt}$ are uniformly bounded, $g$ is a bounded function. 
Then for any open bounded $T$  and $\varphi\in C^\infty_c(T;\V)$ 
\[(g,\varphi)_{t\in T} = \lim_{n\rightarrow \infty}\left(\frac{du_{\nu_n}}{dt},\varphi\right)_{t\in T} = -\lim_{n\rightarrow \infty}\left(u_{\nu_n},\frac{d\varphi}{dt}\right)_{t\in T} =- \left(u,\frac{d\varphi}{dt}\right)_{t\in T} \] so $g=\frac{du}{dt}$, where the first equality comes from weak convergence of  $\frac{du_{\nu_n}}{dt}$ and the final equality from uniform convergence of $u_{\nu_n}$ to $u$ on $\bar T$.
\end{proof}
\begin{thm}
For $u\in C^{0,1}([0,\infty);\V_{[0,1]})\cap H^1_{loc}( [0,\infty);\V)$ as in the previous lemma and almost every $t\in[0,\infty)$, there exists $\beta(t) \in \mathcal{B}(u(t))$ such that \[\frac{du}{dt} = - \Delta u(t) + \frac{1}{\varepsilon}\left(u(t) -\frac{1}{2}\mathbf{1}\right) +\frac{1}{\varepsilon} \beta(t).\]
\end{thm}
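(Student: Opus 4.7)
The plan is to construct $\beta$ as a weak $L^2_{\text{loc}}$-limit of quantities canonically associated with the approximating trajectories $u_{\nu_n}$, and then to verify the membership $\beta(t)\in\mathcal{B}(u(t))$ by exploiting the sign structure of those quantities together with the uniform convergence of the $u_{\nu_n}$.

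First I would define
\[
\beta_n(t) := \varepsilon\frac{du_{\nu_n}}{dt}(t) + \varepsilon\Delta u_{\nu_n}(t) + \tfrac{1}{2}\mathbf{1} - u_{\nu_n}(t),
\]
which by \eqref{ACnu} equals $-W'_{\nu_n}(u_{\nu_n}(t)) + \tfrac{1}{2}\mathbf{1} - u_{\nu_n}(t)$. A direct case split using \eqref{Wnudiff} then shows that $(\beta_n(t))_i$ vanishes when $(u_{\nu_n}(t))_i \in [0,1]$, is strictly positive when $(u_{\nu_n}(t))_i < 0$, and strictly negative when $(u_{\nu_n}(t))_i > 1$. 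Combined with the pointwise bound $(u_{\nu_n})_i \in [-\nu_n, 1+\nu_n]$ from the existence argument, this gives a uniform $L^\infty$ bound $\beta_n \in \V_{[-1/2-\nu_n,\,1/2+\nu_n], t\in[0,\infty)}$.

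Second I would take the weak limit. By Lemma~\ref{convergenu}, $du_{\nu_n}/dt \rightharpoonup du/dt$ weakly in $L^2_{\text{loc}}$, while $u_{\nu_n} \to u$ uniformly on bounded intervals (so also $\Delta u_{\nu_n} \to \Delta u$ uniformly, as $\Delta$ is a bounded operator on $\V$). Summing these convergences gives $\beta_n \rightharpoonup \beta$ weakly in $L^2_{\text{loc}}$, where
\[
\beta(t) := \varepsilon\frac{du}{dt}(t) + \varepsilon\Delta u(t) + \tfrac{1}{2}\mathbf{1} - u(t).
\]
This identification automatically yields \eqref{ACobs2aa} for the pair $(u,\beta)$, so all that remains is to verify $\beta(t)\in\mathcal{B}(u(t))$ for a.e.\ $t\ge0$.

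This last verification is where I expect the main obstacle, since weak convergence does not preserve pointwise signs in general. The plan is to work vertex-by-vertex on a bounded interval $T$, exploiting that the sign of $(\beta_n)_i$ is eventually correct \emph{uniformly} on the relevant measurable subsets of $T$ (those subsets being measurable as $u$ is continuous). On $\{t \in T : u_i(t) \in (\eta, 1-\eta)\}$, uniform convergence forces $(u_{\nu_n})_i \in [0,1]$ for $n$ large, so $(\beta_n)_i \equiv 0$ there; testing the weak limit against any $L^2(T;\mathbb{R})$-function supported on this set at vertex $i$ then forces $\beta_i = 0$ a.e., and letting $\eta\downarrow 0$ covers all of $\{u_i \in (0,1)\}$. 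On $\{t \in T : u_i(t) = 0\}$, uniform convergence gives $(u_{\nu_n})_i < 1$ eventually, so $(\beta_n)_i \ge 0$ there; testing against a non-negative $L^2$-function supported on this set at vertex $i$ passes the inequality to the weak limit, yielding $\beta_i \ge 0$ a.e.\ on $\{u_i = 0\}$. The symmetric argument gives $\beta_i \le 0$ a.e.\ on $\{u_i = 1\}$. Since $V$ is finite and $T\subseteq[0,\infty)$ arbitrary, this completes the verification.
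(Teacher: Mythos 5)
Your construction of $\beta$ is the same as the paper's: you form $\beta_n = \frac{1}{2}\mathbf{1}-u_{\nu_n}-W'_{\nu_n}\circ u_{\nu_n}$, note its sign structure and uniform bound from \eqref{Wnudiff}, and identify $\beta$ as the weak $L^2_{loc}$ limit $\varepsilon\frac{du}{dt}+\varepsilon\Delta u+\frac{1}{2}\mathbf{1}-u$, exactly as in Appendix \ref{existsec}. Where you genuinely diverge is the final step, verifying $\beta(t)\in\mathcal{B}(u(t))$ a.e. The paper invokes the Banach--Saks theorem to upgrade the weak convergence to strong $L^2$ convergence of Ces\`aro means, extracts a.e.\ pointwise convergence of those means along a further subsequence via a diagonal argument, and then argues pointwise at each good $t$. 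You instead avoid Banach--Saks entirely: since the uniform convergence $u_{\nu_n}\to u$ makes the sign of $(\beta_n)_i$ eventually correct \emph{uniformly} on each level set $\{u_i=0\}$, $\{u_i=1\}$, $\{u_i\in(\eta,1-\eta)\}$, you can test the weak convergence directly against sign-definite $L^2$ functions supported on these (measurable, by continuity of $u$) sets and pass the sign to the limit. This is sound --- weak convergence preserves inequalities against non-negative test functions, and the positivity of the weights $d_i^r$ lets you localise to a single vertex --- and it is arguably more elementary, requiring no further subsequence extraction beyond Lemma~\ref{convergenu}. What the paper's heavier machinery buys is uniformity of method: the identical Banach--Saks/Ces\`aro argument is reused in Section \ref{SDconvsec} to extract the $\beta$ term from the semi-discrete iterates, where one again needs a.e.\ pointwise information from a weak limit.
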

\begin{proof} 
Take $\hat\nu_n\downarrow 0$ with subsequence $\nu_n$ and corresponding $u_{\nu_n}$ as in the previous lemma. Define $\beta_{\nu} := \frac{1}{2}\mathbf{1}-u_\nu -W'_\nu\circ u_\nu$. Then it is easy to check that 
 \[(\beta_{\nu_n}(t)) _i=\begin{cases}-\left(1 + \frac{1}{2\nu_n}\right)(u_{\nu_n}(t))_i, &(u_{\nu_n}(t))_i \in [-\nu_n,0]\\
						        0, &  (u_{\nu_n}(t))_i\in[0,1]\\ 
						        \left (1+\frac{1}{2\nu_n}\right)(1-(u_{\nu_n}(t))_i), &(u_{\nu_n}(t))_i \in [1,1+\nu_n]
\end{cases}\]
and by \eqref{ACnu}, \[\frac{du_{\nu_n}}{dt}=-\Delta u_{\nu_n}(t) +\frac{1}{\varepsilon}\left(u_{\nu_n}(t)-\frac{1}{2}\mathbf{1}\right)+\frac{1}{\varepsilon}\beta_{\nu_n}(t).\]
Since $u_{\nu_n}\rightarrow u$ uniformly and $\frac{du_{\nu_n}}{dt}\rightharpoonup \frac{du}{dt}$  (in $L^2_{loc}$), we have that  \[\frac{1}{\varepsilon}\beta_{\nu_n}= \frac{du_{\nu_n}}{dt} + \Delta u_{\nu_n}-\frac{1}{\varepsilon}\left(u_{\nu_n}-\frac{1}{2}\mathbf{1}\right)\rightharpoonup \frac{du}{dt} + \Delta u-\frac{1}{\varepsilon}\left(u-\frac{1}{2}\mathbf{1}\right) =:\frac{1}{\varepsilon}\beta.\]
It suffices then to prove that $\beta$ so defined obeys $\beta(t)\in\mathcal{B}(u(t))$ for a.e. $t\geq 0$. By the Banach--Saks theorem \cite{BS}, passing to a further subsequence of $\nu_n$, the Cesàro sums converge strongly, i.e. for all bounded intervals $T$ \[ \frac{1}{N}\sum_{n=1}^N \beta_{\nu_n} \rightarrow \beta \text { in } L^2(T;\V)\] as $N\rightarrow\infty$.
Next, recall that $L^2$ convergence implies a.e. pointwise convergence of a subsequence. Thus, covering $[0,\infty)$ by countably many such $T$, by a diagonal argument (as in the above lemma) we extract $N_k\rightarrow \infty$ such that for a.e. $t\geq0$ 
\be\label{betanu} \beta(t) = \lim_{k\rightarrow\infty}  \frac{1}{N_k}\sum_{n=1}^{N_k} \beta_{\nu_n}(t). \ee
Fix a $t$ at which this convergence holds. First, suppose $u_i(t) \in (0,1)$. Then eventually $(u_{\nu_n}(t))_i\in(0,1)$ and so $(\beta_{\nu_n}(t)) _i=0$. Therefore by \eqref{betanu}, $\beta_i(t) = 0$ as desired. 

Next, suppose that $u_i(t)=0$. Then eventually $(u_{\nu_n}(t))_i\in[-\nu_n,1)$ and note that for $(u_{\nu_n}(t))_i < 1 $ we have $(\beta_{\nu_n}(t))_i \in [0,\frac{1}{2} + \nu_n]$. It follows by \eqref{betanu} that $\beta_i(t)\geq 0$, as desired. Likewise for $u_i(t) = 1$,  $\beta_i(t) \leq 0$. Therefore $\beta(t)\in\mathcal{B}(u(t))$, completing the proof.
\end{proof}
\begin{nb}As an example, let $u_0 = \mathbf{0}$. One can check that \eqref{ACnu} has unique solution: \[u_\nu(t) = -\nu \left(1-e^{-\frac{t}{2\varepsilon\nu}}\right)\mathbf{1}.\] Hence we have the expression for $\beta_\nu$ \[\beta_\nu(t) = \left(\nu +\frac{1}{2}\right)\left(1-e^{-\frac{t}{2\varepsilon\nu}}\right)\mathbf{1}\] and taking $\nu\downarrow 0$ we therefore get the expected AC solution:
\begin{align*}
&u(t)=\mathbf{0}, &\beta(t) = \begin{cases} \mathbf{0}, &t = 0,\\
\frac{1}{2}\mathbf{1}, &t>0.
\end{cases}\end{align*}
\end{nb}
\begin{nb}
We can eliminate some of the need to pass to a subsequence by the uniqueness of AC trajectories {(}i.e. by Theorem \ref{uniquenessprequel}{)} and the standard result{:} if $(X,\rho)$ is a topological space and $x_n,x\in X$ are such that every subsequence of $x_n$ has a further subsequence converging to $x$, then $x_n\rightarrow x$.\footnote{Suppose $x_n\nrightarrow x$. Then there exists $U \in\rho$ such that $x\in U$ and infinitely many $x_n\notin U$. Choose $x_{n_k}$ such that for all $k$, $x_{n_k}\notin U$. This subsequence has no further subsequence converging to $x$. \qed}

Let $\tilde\nu_n\downarrow 0$. By above every subsequence $u_{\hat\nu_{n}}$ of $u_{\tilde\nu_n}$ has a further subsequence $u_{\nu_n}$ uniformly converging on every compact interval to an AC solution $u$ with $u(0)=u_0$. By uniqueness, there is only one such solution $u$, so we take $x$ as this $u$ and $(X,\rho)$ as $\V_{t\in[0,\infty)}$ with the topology of ``uniform convergence on every compact interval" in the result. Hence $u_{\tilde\nu_n} \rightarrow u$ uniformly on every compact interval.
Likewise, as $du/dt$ and $\beta$ are unique up to a.e. equivalence, we have $du_{\tilde\nu_n}/dt \rightharpoonup du/dt$ and $\beta_{\tilde\nu_n}\rightharpoonup\beta$ in $L^2_{loc}$.
\end{nb}
\section{Proof of Theorem \ref{uniquenessprequel}}\label{uniquesec}
We first note a useful pair of facts.
\begin{prop}\label{usefullem}
	Let $z\in\V_{t\in T}$ for $T$ any interval and let $z_{+}(t)$ be the positive part of $z(t)$, i.e. $(z_+)_i(t) :=\max\{z_i(t),0\}$. Then for all $ t\in T$, 
	\[\langle \nabla z_+(t),\nabla z_+(t)\rangle_\mathcal{E}\leq \langle \nabla z(t),\nabla z_+(t)\rangle_\mathcal{E}.\] 
	Also if $z \in H^1(T;\V)\cap C^0(\bar T;\V)$ and $T=(0,T^*)$ for $T^*>0$ then, \[\frac{1}{2}||z_+(T^*)||_\V^2 -\frac{1}{2}||z_+(0)||_\V^2 =\int_0^{T^*}  \left\langle\frac{dz}{dt},z_+\right\rangle_\V \; dt .\]
\end{prop}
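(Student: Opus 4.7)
The proposition splits neatly into an algebraic inequality on $\mathcal{E}$ and a chain-rule-plus-FTC argument on $\V$. For the first part, my plan is to decompose $z = z_+ - z_-$ with $z_-(t) := \max\{-z(t),0\}$ and reduce the claimed inequality to showing $\langle \nabla z_-(t),\nabla z_+(t)\rangle_\mathcal{E} \leq 0$ pointwise in $t$. Indeed, $\langle \nabla z,\nabla z_+\rangle_\mathcal{E} - \langle \nabla z_+,\nabla z_+\rangle_\mathcal{E} = -\langle \nabla z_-,\nabla z_+\rangle_\mathcal{E}$.

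The estimate on $\langle \nabla z_-,\nabla z_+\rangle_\mathcal{E}$ will follow from a vertex-wise expansion: for each unordered pair $i,j$,
\[
(\nabla z_-)_{ij}(\nabla z_+)_{ij} = \bigl((z_-)_j-(z_-)_i\bigr)\bigl((z_+)_j-(z_+)_i\bigr) = -(z_-)_j(z_+)_i - (z_-)_i(z_+)_j,
\]
where the two diagonal products $(z_\pm)_i(z_\mp)_i$ vanish since $z_+$ and $z_-$ have disjoint support. Each remaining cross term is non-positive, and since $\omega_{ij} \geq 0$, summing as in the definition of $\langle\cdot,\cdot\rangle_\mathcal{E}$ yields $\langle \nabla z_-(t),\nabla z_+(t)\rangle_\mathcal{E} \leq 0$, which gives part one.

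For the second part, my plan is to work componentwise via Proposition \ref{graphH1}: since $z \in H^1(T;\V)\cap C^0(\bar T;\V)$, each $z_i \in H^1(T;\mathbb{R}) \cap C^0(\bar T; \mathbb{R})$, and by Lemma \ref{H1AClem} each $z_i$ is absolutely continuous on $[0,T^*]$ with weak derivative equal to the classical derivative a.e. I will then invoke the standard chain rule for Sobolev functions applied to the $C^1$ convex function $\phi(x):=\tfrac12(\max\{x,0\})^2$, whose derivative $\phi'(x) = \max\{x,0\}$ is Lipschitz. This gives $\phi(z_i) \in H^1(T;\mathbb{R})$ with $\frac{d}{dt}\phi(z_i) = (z_+)_i\,\frac{dz_i}{dt}$ a.e.\ (using that $\frac{dz_i}{dt}(\chi_{\{z_i>0\}}-1)(z_+)_i = 0$ a.e.). Applying the fundamental theorem of calculus to each $\phi(z_i)$, multiplying by $d_i^r$, and summing over $i\in V$ produces the claimed identity via the definitions of $\|\cdot\|_\V$ and $\langle\cdot,\cdot\rangle_\V$.

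The only delicate step is the chain rule for $\phi\circ z_i$, since $\phi$ is not $C^2$ at the origin; I will cite a standard reference (e.g.\ Leoni, or Brezis) for the fact that post-composition with a Lipschitz $C^1$ function preserves $H^1$ and that the weak derivative is obtained by the formal chain rule, with the ambiguity at $\{z_i = 0\}$ immaterial because $(z_+)_i$ vanishes there. Everything else is bookkeeping with the definitions from Section \ref{groundwork}.
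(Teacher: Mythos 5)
Your proof is correct. For the first claim it is essentially the paper's argument in different packaging: the paper writes $\langle \nabla z - \nabla z_+,\nabla z_+\rangle_\mathcal{E}$ as $\tfrac12\sum_{i,j}\omega_{ij}X_{ij}$ and checks the sign of $X_{ij}$ by cases, whereas you substitute $z-z_+=-z_-$ first and then observe that, after the diagonal products cancel by disjointness of supports, only manifestly non-positive cross terms $-(z_-)_j(z_+)_i$ survive; the two computations are the same edge-wise sign analysis, and yours is arguably slightly cleaner since it avoids the WLOG case split. The second claim is where you genuinely diverge: the paper reduces to the scalar identity $\tfrac12(z_+(T^*))_i^2-\tfrac12(z_+(0))_i^2=\int_0^{T^*}\frac{dz_i}{dt}(z_+)_i\,dt$ via Proposition \ref{graphH1} and then cites \cite[Lemma 3.3]{H1reg} as a black box, while you prove it directly by applying the Sobolev chain rule to $\phi(x)=\tfrac12(\max\{x,0\})^2$ (which is $C^1$ with Lipschitz derivative, and $\phi'$ is bounded on the compact range of the continuous representative $z_i$ on $[0,T^*]$) and then the fundamental theorem of calculus for absolutely continuous functions. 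Your route is more self-contained and exploits the fact that on a finite graph the problem is genuinely one-dimensional componentwise, so none of the $H^1(I;H^1(\Omega)^*)$ duality subtleties that motivate \cite{H1reg} arise; the paper's citation buys brevity and a statement that would survive in the infinite-dimensional setting. Either is acceptable here.
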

\begin{proof}
	Consider $\langle\nabla z -\nabla z_+,\nabla z_+\rangle_\mathcal{E}$. By definition we have \[\langle\nabla z -\nabla z_+,\nabla z_+\rangle_\mathcal{E}=\frac{1}{2}\sum_{i,j\in V} \omega_{ij}X_{ij}\] where $X_{ij}:=((z_+)_i -(z_+)_j)(z_i-(z_+)_i -z_j +(z_+)_j)$. We claim that $X_{ij}\geq 0$. WLOG suppose $(z_+)_i \geq (z_+)_j$. If $(z_+)_i = (z_+)_j$ then $X_{ij}=0$, and if $z_i,z_j > 0$ then $X_{ij}=0$. Finally if $z_i > 0, z_j \leq 0$ then $X_{ij}=-z_iz_j\ge 0$. So $\langle\nabla z -\nabla z_+,\nabla z_+\rangle_\mathcal{E}\geq 0$. 
	
For the latter claim, note that it suffices to show that for each $i\in V$ \be\label{fact}\frac{1}{2}(z_+(T^*))_i^2 -\frac{1}{2}(z_+(0))_i^2  =\int_0^{T^*} \frac{dz_i(t)}{dt}(z_+)_i(t) \; dt \ee and recall that $z \in H^1(T;\V)$ if and only if $z_i\in H^1(T;\mathbb{R})$ for each $i\in V$. The equation \eqref{fact} then follows from \cite[Lemma 3.3]{H1reg}. \end{proof}
We now prove a slight strengthening of a graph analogue of \cite[Lemma 2.4]{CE}. 
\begin{thm}[Cf. $\text{\cite[Lemma 2.4]{CE}}$]\label{cp2}
	Let $u,v\in H^1((0,T);\V)\cap C^0([0,T];\V)$ and $\beta,\gamma\in\V_{t\in[0,T]}$ be such that for all $i\in V$, for all $t\in (0,T)$ $0\leq u_i(t),v_i(t)\leq 1$, and for a.e. $t\in(0,T)$
\begin{align}
\label{comp1}
	&\varepsilon \frac{du_i}{dt} + \varepsilon(\Delta u(t))_i+\frac{1}{2}-u_i(t)\geq \beta_i(t), &\beta(t)\in\mathcal{B}(v(t))\\
\label{comp2}
	&\varepsilon \frac{dv_i}{dt} + \varepsilon(\Delta v(t))_i+\frac{1}{2}-v_i(t)\leq \gamma_i(t), &\gamma(t)\in\mathcal{B}(v(t)).
\end{align}
Then %if $u\in H^1((0,T);\V)\cap C^0([0,T];\V_{[0,1]})$ is a solution to \eqref{ACobs2} and 
for all $i\in V$, $v_i(0)\leq u_i(0)$, it follows that for all $t\in [0,T]$ and $i\in V$, $v_i(t)\leq u_i(t)$.
\end{thm}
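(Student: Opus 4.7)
The plan is to set $w := v - u$, consider the positive part $w_+$, and prove that $\|w_+(t)\|_\V^2 \equiv 0$ via a Grönwall argument. Subtracting \eqref{comp1} from \eqref{comp2} gives, for a.e.\ $t \in (0,T)$ and every $i \in V$,
\[ \varepsilon \frac{dw_i}{dt} + \varepsilon(\Delta w(t))_i - w_i(t) \leq \gamma_i(t) - \beta_i(t). \]
First I will pair this componentwise against $(w_+)_i$, sum against $d_i^r$, and invoke the second part of Proposition \ref{usefullem} (applied to $w \in H^1((0,T);\V) \cap C^0([0,T];\V)$) to rewrite $\langle \varepsilon \, dw/dt, w_+\rangle_\V$ as $\tfrac{\varepsilon}{2}\frac{d}{dt}\|w_+\|_\V^2$ after integration over an arbitrary subinterval. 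Then I will use the standard graph integration-by-parts identity $\ip{\Delta w, w_+} = \langle \nabla w, \nabla w_+\rangle_\mathcal{E}$ together with the first part of Proposition \ref{usefullem} to conclude $-\varepsilon\ip{\Delta w, w_+} \leq -\varepsilon\|\nabla w_+\|_\mathcal{E}^2 \leq 0$, and note that $\ip{w, w_+} = \|w_+\|_\V^2$.

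The main step — and the only place where the double-obstacle structure enters — is to show $\ip{\gamma - \beta, w_+} \leq 0$. I will argue vertex-by-vertex: at any $i$ with $w_i(t) > 0$ we have $v_i(t) > u_i(t)$, so $u_i(t) \in [0,1)$, which by the definition of $\mathcal{B}$ forces $\beta_i(t) \geq 0$; symmetrically $v_i(t) \in (0,1]$ forces $\gamma_i(t) \leq 0$. Hence $(\gamma_i - \beta_i)(w_+)_i \leq 0$ at such $i$, and the contribution vanishes where $w_i(t) \leq 0$. This is the critical sign-check and is the part where the hypotheses $\beta \in \mathcal{B}(u)$, $\gamma \in \mathcal{B}(v)$ are consumed.

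Combining these estimates yields, for every $t^* \in [0,T]$,
\[ \tfrac{\varepsilon}{2}\|w_+(t^*)\|_\V^2 - \tfrac{\varepsilon}{2}\|w_+(0)\|_\V^2 \leq \int_0^{t^*} \|w_+(s)\|_\V^2 \, ds. \]
Since $v_i(0) \leq u_i(0)$ for every $i$, we have $w_+(0) = \mathbf{0}$, so Grönwall's inequality (in its integral form) immediately gives $\|w_+(t)\|_\V^2 \equiv 0$ on $[0,T]$, whence $v_i(t) \leq u_i(t)$ for every $t \in [0,T]$ and $i \in V$. The hardest piece to get right is keeping track of the weak-derivative/continuity hypotheses so that Proposition \ref{usefullem} applies to $w = v-u$ on arbitrary subintervals $(0,t^*) \subseteq (0,T)$; once that is in place the rest of the argument is a routine energy estimate.
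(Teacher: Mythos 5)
Your proposal is correct and follows essentially the same route as the paper's proof: subtract the two differential inequalities, pair with $w_+$, handle the subdifferential terms by the same vertexwise sign check (the paper phrases it as $\gamma_i(t)\leq\beta_i(t)$ wherever $v_i(t)>u_i(t)$, which is exactly your $\beta_i\geq 0$, $\gamma_i\leq 0$ observation), control the Laplacian term via both parts of Proposition \ref{usefullem}, and close with Gr\"onwall. Your explicit remark that the energy inequality must hold on every subinterval $(0,t^*)$ before invoking Gr\"onwall is a small point of care that the paper leaves implicit.
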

\begin{proof}
Subtracting \eqref{comp1} from \eqref{comp2}, we get that for a.e. $t\in (0,T)$ (understanding the inequality vertexwise) \[\varepsilon\frac{d}{dt}(v(t)-u(t)) +\varepsilon\Delta(v(t)-u(t)) -(v(t)-u(t))\leq \gamma(t)-\beta(t) \] where $\beta(t)\in\mathcal{B}(u(t))$. Let $w:=v-u$ and take the inner product with $w_+$
\[\varepsilon\left\langle\frac{dw}{dt},w_+(t)\right\rangle_\V+\varepsilon\ip{\Delta w(t),w_+(t)} -\ip{w(t),w_+(t)} \leq \ip{\gamma(t)-\beta(t),w_+(t)}. \]
Consider the $RHS$. If $v_i(t)\leq u_i(t)$ then $(w_+(t))_i(\gamma_i(t)-\beta_i(t)) = 0$, and if $v_i(t) > u_i(t)$ a simple case check reveals that $\gamma_i(t)\leq \beta_i(t)$. Therefore $RHS\leq 0$. Hence we have that \begin{equation*}\begin{split}\ip{w_+(t),w_+(t)} =\ip{w(t),w_+(t)}
     										   &\geq \varepsilon\left\langle\frac{dw}{dt},w_+(t)\right\rangle_\V+\varepsilon\ip{\Delta w(t),w_+(t)}\\
&= \varepsilon\left\langle\frac{dw}{dt},w_+(t)\right\rangle_\V+\varepsilon\left\langle\nabla w(t),\nabla w_+(t)\right\rangle_\mathcal{E}\\
     										   &\geq \varepsilon\left\langle\frac{dw}{dt},w_+(t)\right\rangle_\V+\varepsilon\left\langle\nabla w_+(t),\nabla w_+(t)\right\rangle_\mathcal{E} \text{by Proposition \ref{usefullem}}\\
     										   &\geq \varepsilon\left\langle\frac{dw}{dt},w_+(t)\right\rangle_\V
\end{split}\end{equation*}
for a.e. $t\in (0,T)$. 
Note that $w_+(0) = \mathbf{0}$. Thus integrating and applying the second part of Proposition \ref{usefullem}, we have that \[\frac{\varepsilon}{2}||w_+(T)||^2_\V \leq \int_0^T ||w_+(t)||^2_\V \; dt\]
so by Gr\"onwall's integral inequality $ ||w_+(t)||^2_\V \leq 0$ for all $t\in [0,T]$, and hence $w_+(t) = \mathbf{0}$ in $[0,T]$. Therefore $v_i(t)\leq u_i(t)$ for all $i\in V$ and $t\in [0,T]$. 
\end{proof}
\begin{nb}
	The condition that $v_i(t)\geq 0$ can somewhat be relaxed. If $v_i(t)<0$ then from the subdifferential $\gamma_i(t) = \infty$, in which case \eqref{comp2} is still meaningfully satisfied. The only hiccup that arises in the proof is the $v_i(t)\leq u_i(t)$ case for $(w_+(t))_i(\gamma_i(t)-\beta_i(t))$, as this becomes the undefined $0\times\infty$. But if we consider $u$, $v$, $\beta$ and $\gamma$ arising as in Appendix \ref{existsec} from a limit of $C^1$ potentials approaching $W$, then for $v_i(t)<0$ the corresponding limiting term \[((v_{\nu_n}(t))_i-(u_{\nu_n}(t))_i)_+\left(\frac{1}{2}-(v_{\nu_n}(t))_i-W'_{\nu_n}((v_{\nu_n}(t))_i) - \frac{1}{2}+(u_{\nu_n}(t))_i+W'_{\nu_n}((u_{\nu_n}(t))_i)\right)\] has eventually $(v_{\nu_n}(t))_i\leq-\nu_n \leq(u_{\nu_n}(t))_i$, so the term is eventually constantly zero. Hence we may take $(w_+(t))_i(\gamma_i(t)-\beta_i(t))=0$ as desired. Likewise $u_i(t)\leq 1$ can be relaxed.
\end{nb}
Finally, this comparison principle yields uniqueness of AC solutions.
\begin{cor} \label{ACuniqueness} 
Let $u,v\in H^1((0,T);\V)\cap C^0([0,T];\V_{[0,1]})$ and $\beta,\gamma\in\V_{t\in[0,T]}$. Let $(u,\beta)$, $(v,\gamma)$ solve \eqref{ACobs2} on $[0,T]$ with $u(0)=v(0)$. Then for all $t\in [0,T]$, $u(t) = v(t)$, and for a.e. $t\in [0,T]$, $\beta(t) = \gamma(t)$.   
\end{cor}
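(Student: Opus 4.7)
The plan is to deduce this corollary directly from Theorem \ref{cp2} by applying the comparison principle symmetrically in both directions. Since $(u,\beta)$ solves \eqref{ACobs2} on $[0,T]$, it satisfies \eqref{comp1} with equality for a.e. $t$, and in particular the inequality holds; similarly $(v,\gamma)$ satisfies \eqref{comp2} with equality, hence the inequality. The hypotheses on the ranges $u_i(t), v_i(t) \in [0,1]$ and on $\beta(t) \in \mathcal{B}(u(t))$, $\gamma(t)\in\mathcal{B}(v(t))$ are exactly those of Theorem \ref{cp2}, and the initial condition $u(0) = v(0)$ gives $v_i(0) \leq u_i(0)$ for every $i\in V$. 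Theorem \ref{cp2} then yields $v_i(t) \leq u_i(t)$ for all $t\in [0,T]$ and all $i \in V$.

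Next, I would reverse the roles of $u$ and $v$. Interpreting the equations in \eqref{ACobs2} for $(v,\gamma)$ as the "$\geq$" inequality \eqref{comp1} and those for $(u,\beta)$ as the "$\leq$" inequality \eqref{comp2}, the same hypotheses are again met (with $u(0)\leq v(0)$ trivially from equality), so Theorem \ref{cp2} gives $u_i(t)\leq v_i(t)$ for all $t\in[0,T]$ and $i\in V$. Combining the two directions yields $u(t) = v(t)$ for every $t\in[0,T]$.

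Finally, once $u \equiv v$ is established, the conclusion $\beta = \gamma$ a.e.\ is immediate from the defining ODE itself: by \eqref{ACobs2} applied to each solution and for a.e. $t\in [0,T]$,
\[
\beta(t) = \varepsilon\frac{du}{dt} + \varepsilon\Delta u(t) + \tfrac{1}{2}\mathbf{1} - u(t) = \varepsilon\frac{dv}{dt} + \varepsilon\Delta v(t) + \tfrac{1}{2}\mathbf{1} - v(t) = \gamma(t),
\]
where we used that weak derivatives of a.e.-equal functions agree a.e. There is no real obstacle here, since Theorem \ref{cp2} does all the heavy lifting; the only very mild subtlety is confirming that the two-sided application of the comparison principle is legitimate, which reduces to the trivial observation that an equality implies both a $\geq$ and a $\leq$ inequality.
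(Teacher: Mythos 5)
Your proposal is correct and follows essentially the same route as the paper: the paper likewise applies the comparison principle of Theorem \ref{cp2} in one direction and then invokes symmetry to get the reverse inequality, concluding $u=v$. For the final step the paper cites Theorem \ref{betathm} (that $\beta$ is a.e.\ uniquely determined by $u$) rather than substituting back into the ODE directly, but these are the same observation.
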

\begin{proof}
Since $(v,\gamma)$ solves \eqref{ACobs2}, it follows immediately that they satisfy \eqref{comp2}. Furthermore, $u(0)\leq v(0)$ vertexwise. Hence by the comparison principle, $u(t)\leq v(t)$ vertexwise for all $t\in[0,T]$. By symmetry, $v(t)\leq u(t)$ vertexwise, and hence $u(t)=v(t)$, for all $t\in [0,T]$. Finally, by Theorem \ref{betathm}, $\beta(t)$ and $\gamma(t)$ are uniquely determined a.e. by $u(t)$ and $v(t)$, and therefore $\beta(t) = \gamma(t)$ for a.e. $t\in [0,T]$. 
\end{proof}
\begin{cor}\label{ACuniqueness2}Let $u,v\in H^1((0,\infty);\V)\cap C^0([0,\infty);\V_{[0,1]})$and $\beta,\gamma\in\V_{t\in[0,\infty)}$. Let $(u,\beta)$, $(v,\gamma)$ solve \eqref{ACobs2} on $[0,\infty)$ with $u(0)=v(0)$.  Then for all $t\in [0,\infty)$, $u(t) = v(t)$, and for a.e. $t\in [0,\infty)$, $\beta(t) = \gamma(t)$.   
\end{cor}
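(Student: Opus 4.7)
The plan is to reduce this to Corollary~\ref{ACuniqueness} by a straightforward restriction argument on compact subintervals, using the elementary fact that a statement holding on every $[0,T]$ with $T>0$ holds on $[0,\infty)$.

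First I would fix an arbitrary $T>0$ and consider the restrictions of $u,v,\beta,\gamma$ to $[0,T]$. Since $u,v\in H^1((0,\infty);\V)$, we have $\int_0^T ||u(t)||_\V^2\,dt+\int_0^T||du/dt||_\V^2\,dt<\infty$, and similarly for $v$; hence the restrictions lie in $H^1((0,T);\V)$. The continuity and range conditions likewise restrict, so the restrictions of $u$ and $v$ lie in $H^1((0,T);\V)\cap C^0([0,T];\V_{[0,1]})$. Moreover, the condition that $(u,\beta)$ and $(v,\gamma)$ satisfy \eqref{ACobs2} at almost every $t\in[0,\infty)$ implies they satisfy it at almost every $t\in [0,T]$. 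Finally, $u(0)=v(0)$ by hypothesis. Thus the pairs $(u|_{[0,T]},\beta|_{[0,T]})$ and $(v|_{[0,T]},\gamma|_{[0,T]})$ satisfy the hypotheses of Corollary~\ref{ACuniqueness}.

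Applying Corollary~\ref{ACuniqueness} on $[0,T]$ then yields $u(t)=v(t)$ for all $t\in[0,T]$ and $\beta(t)=\gamma(t)$ for a.e. $t\in[0,T]$. Writing $[0,\infty) = \bigcup_{n\in\mathbb{N}} [0,n]$, the first conclusion extends to all $t\in[0,\infty)$ directly. For the second, let $N_n\subseteq [0,n]$ be the Lebesgue null set outside which $\beta=\gamma$ on $[0,n]$; then $N:=\bigcup_n N_n$ is a countable union of null sets, hence null, and $\beta(t)=\gamma(t)$ for every $t\in [0,\infty)\setminus N$.

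There is no real obstacle here: the work has already been done in establishing Corollary~\ref{ACuniqueness}, and the present statement is simply its globalisation. The only minor point worth checking is the behaviour of the exceptional null set under the countable union, which poses no difficulty.
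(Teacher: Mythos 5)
Your proposal is correct and follows essentially the same route as the paper's proof: restrict to $[0,T]$, invoke Corollary~\ref{ACuniqueness}, and globalise over a countable exhaustion of $[0,\infty)$. Your explicit handling of the countable union of null sets is a slightly more careful write-up of the same argument.
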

\begin{proof}
For all $T\in [0,\infty)$, $(u|_{[0,T]},\beta|_{[0,T]}),(v|_{[0,T]},\gamma|_{[0,T]})$ are solutions on $[0,T]$ with $u(0)=v(0)$. Hence $u|_{[0,T]}=v|_{[0,T]}$, and $\beta|_{[0,T]} = \gamma|_{[0,T]}$ almost everywhere. Thus for all $t\in [0,\infty)$, $u(t)=v(t)$, and for a.e. $t\in [0,\infty)$, $\beta(t) = \gamma(t)$.
\end{proof}
\section{Another comparison principle from \cite{CE}}\label{compsec}
We here prove a graph analogue of \cite[Lemma 2.3]{CE}.

From Theorem \ref{ACobsweak} we recall the weak formulation of double-obstacle AC flow: if $u\in\V_{[0,1],t\in T}$ a solution to double-obstacle AC flow \eqref{ACobs2} then for a.e. $t\in T$
\begin{equation}
\label{ACobs3}
\forall\eta\in\V_{[0,1]},\: \left\langle\frac{du}{dt}-u(t)+\frac{1}{2\varepsilon}\mathbf{1},\eta-u(t)\right\rangle_\V+\left\langle\nabla u(t),\nabla\eta-\nabla u(t)\right\rangle_\mathcal{E}\geq0.
\end{equation} 
\begin{thm}[Cf. $\text{\cite[Lemma 2.3]{CE}}$]
\label{cp1}
	Let $w\in\V_{(-\infty,1], t\in(0,T)}\cap H^1((0,T);\V)$ be continuous and let $u\in\V_{[0,1],t\in(0,T)}\cap H^1((0,T);\V)$ be continuous and obey \eqref{ACobs3}. Suppose that $w_i(0)\leq u_i(0)$ and that there exists $f\in\V_{t\in(0,T)}$ such that for all $T^*\in[0,T]$\be\label{fcon}\int_0^{T^*}\ip{f(t),(w-u)_{+}(t)}\; dt \leq  \int_0^{T^*} \ip{w(t),(w-u)_{+}(t)}\; dt\ee and
\begin{align}\label{compint} & \forall \eta\in\V_{[0,\infty),t\in(0,T)} &\int_0^T \left\langle\frac{dw}{dt}+\frac{1}{2\varepsilon}\mathbf{1},\eta\right\rangle_\V + \langle \nabla w,\nabla\eta\rangle_\mathcal{E}-\frac{1}{\varepsilon}\langle f,\eta\rangle_\V \; dt\leq 0.
	\end{align}Then it follows that $ \forall i\in V\: \forall t\in[0,T]$, \[ w_i(t) \leq u_i(t). \]
\end{thm}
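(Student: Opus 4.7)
The plan is to build on the strategy of Theorem~\ref{cp2}: introduce $z := w - u$, test both hypotheses against the positive part $z_+$, and drive $z_+$ to zero via an integral Gr\"onwall argument. The asymmetry between the two hypotheses is the structural obstacle to simply subtracting as in Theorem~\ref{cp2}: $u$ is governed by a pointwise variational inequality, while $w$ only satisfies an \emph{integrated} distributional bound involving the auxiliary function $f$. My strategy is therefore to \emph{add} appropriately chosen inequalities rather than subtract.

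First I would test \eqref{ACobs3} at almost every $t \in (0,T)$ with $\eta(t) := u(t) + z_+(t) = \max\{u(t),w(t)\}$; admissibility $\eta \in \V_{[0,1]}$ is exactly where the hypothesis $w \leq 1$ is used, since on vertices with $w_i > u_i$ we have $\eta_i = w_i \in [0,1]$, and elsewhere $\eta_i = u_i \in [0,1]$. Because $\eta - u = z_+$, this produces a pointwise-in-$t$ bound on $\langle du/dt, z_+\rangle_\V$ and $\langle \nabla u,\nabla z_+\rangle_\mathcal{E}$. In parallel, for arbitrary $T^* \in [0,T]$, I would apply \eqref{compint} with the non-negative test function $\eta(t) := z_+(t)\,\chi_{[0,T^*]}(t) \in \V_{[0,\infty),t\in(0,T)}$.

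Integrating the first inequality over $[0,T^*]$ and adding it to the second, the gradient contributions combine into $\int_0^{T^*}\langle \nabla z,\nabla z_+\rangle_\mathcal{E}\,dt$, bounded below by zero by the first part of Proposition~\ref{usefullem}; the time-derivative contributions combine into $\int_0^{T^*}\langle dz/dt,z_+\rangle_\V\,dt = \tfrac{1}{2}\|z_+(T^*)\|_\V^2$ via the second part of Proposition~\ref{usefullem} and the hypothesis $z_+(0)=\mathbf{0}$; and the constant $\tfrac{1}{2\varepsilon}\mathbf{1}$ terms cancel. The only remaining piece is (up to a $1/\varepsilon$ factor) $\int_0^{T^*}\langle u-f,z_+\rangle_\V\,dt$, and this is precisely where hypothesis \eqref{fcon} is crucial, yielding the lower bound
\[
\int_0^{T^*}\langle u-f,z_+\rangle_\V\,dt \;\geq\; \int_0^{T^*}\langle u-w,z_+\rangle_\V\,dt \;=\; -\int_0^{T^*}\|z_+(t)\|_\V^2\,dt,
\]
using the identity $\langle z,z_+\rangle_\V = \|z_+\|_\V^2$.

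Collecting these pieces gives an integral inequality of the form $\|z_+(T^*)\|_\V^2 \leq C\int_0^{T^*}\|z_+(t)\|_\V^2\,dt$ for some constant $C$ depending on $\varepsilon$, and Gr\"onwall's inequality (with $z_+(0)=\mathbf{0}$) forces $z_+(T^*) = \mathbf{0}$ for every $T^* \in [0,T]$, i.e.\ $w_i(t) \leq u_i(t)$ pointwise on $V \times [0,T]$. The main delicacy I anticipate is careful sign-tracking when combining a ``$\geq 0$'' pointwise variational inequality for $u$ with a ``$\leq 0$'' integrated distributional inequality for $w$; the other delicate point is verifying admissibility of $\eta = u + z_+$ at every time, which is exactly the step where the hypothesis $w \leq 1$ is indispensable and without which the approach collapses.
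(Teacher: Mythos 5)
Your proposal is correct and follows essentially the same route as the paper's proof: test \eqref{ACobs3} with $\eta = u + z_+$ (admissible because $w\leq 1$), test \eqref{compint} with $z_+\chi_{[0,T^*]}$, combine, control the gradient and time-derivative terms via Proposition \ref{usefullem}, use \eqref{fcon} to bound the $f$-term by $\int\|z_+\|_\V^2$, and conclude by Gr\"onwall. The sign-tracking you flag as the main delicacy works out exactly as you describe, matching the paper's inequality $(*)$.
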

\begin{proof} We follow the proof in \cite{CE}.
Let $z:= w-u$ and $T^*\in[0,T]$. Taking $\eta =z_+ + u\in\V_{[0,1],t\in(0,T)}$\footnote{Since either $\eta_i(t)=u_i(t)\in[0,1]$ or $\eta_i(t) = w_i(t)$ in the case when $0\leq u_i(t)\leq w_i(t)\leq 1$. } and then integrating \eqref{ACobs3} gives \[
\int_0^{T^*}\left\langle\frac{du}{dt}-\frac{1}{\varepsilon}u+\frac{1}{2\varepsilon}\mathbf{1},z_+\right\rangle_\V+\left\langle\nabla u,\nabla z_+\right\rangle_\mathcal{E}\; dt\ge0
\] 
and taking $\eta(t) = z_+(t)$ for $t<T^*$ and $\eta(t) = 0$ thereafter in \eqref{compint} gives
 \[ \int_0^{T^*}  \left\langle\frac{dw}{dt}+\frac{1}{2\varepsilon}\mathbf{1},z_+\right\rangle_\V + \langle \nabla w,\nabla z_+\rangle_\mathcal{E}-\frac{1}{\varepsilon}\langle f,z_+\rangle \; dt\leq 0 .\] Thus combining these inequalities we get
 \begin{equation*}\int_0^{T^*}  \left\langle\frac{dz}{dt},z_+\right\rangle_\V +  \langle \nabla z,\nabla z_+\rangle_\mathcal{E}\; dt \leq \frac{1}{\varepsilon}\int_0^{T^*}\ip{f-u,z_+}\; dt. \tag{$*$}\end{equation*}
 Now by subtracting $\ip{u,z_{+}}$ from \eqref{fcon}, \[ RHS \:(*)\leq \frac{1}{\varepsilon}\int_0^{T^*}\ip{z,z_+}\; dt =\frac{1}{\varepsilon}\int_0^{T^*}||z_+||_\V^2\; dt\] 
and by Proposition \ref{usefullem} we have that \[ LHS \: (*)\geq \frac{1}{2}||z_+(T^*)||_\V^2 -\frac{1}{2}||z_+(0)||_\V^2 + \int_0^{T^*}  \langle \nabla z_+,\nabla z_+\rangle_\mathcal{E}\; dt. \] 
Thus let $Z(t) := ||z_+(t)||_\V^2$. Note that $Z(0) = 0$ and thus we have by the above that \[\frac{1}{2}Z(T^*)\leq \frac{1}{2}Z(T^*) + \int_0^{T^*}  ||\nabla z_+||^2_\mathcal{E}\; dt\leq LHS \: (*)\leq RHS \: (*)\leq \frac{1}{\varepsilon}\int_0^{T^*}Z(t)\; dt.\]
Then by Gr\"onwall's integral inequality $Z(T^*)\leq 0$ for all $T^*\in [0,T]$ and hence $Z(t) = 0$ in $[0,T]$, thus for $t\in [0,T]$ $z_+(t)=0$ and so $w(t)\leq u(t)$. 
\end{proof}


\begin{thebibliography}{99}
%\bibitem{ATW} F. Almgren, J. E. Taylor, and L. Wang, \emph{Curvature-driven 
%flows: a variational approach}, SIAM J. Control Optim. 31 (1993), no. 2, 387\textendash
%438.
\bibitem{vGGOB}
\textsc{Y. van Gennip, N. Guillen, B. Osting, and A. L. Bertozzi},
\emph{Mean Curvature, Threshold Dynamics, and Phase Field Theory on Finite Graphs.}
Milan Journal of Mathematics 82 (2014), no. 1, pp. 3\textendash65.
\bibitem{BF} \textsc{A. L. Bertozzi and A. Flenner}, \emph{Diffuse interface models on graphs
for analysis of high dimensional data}, Multiscale Modeling and Simulation
10 (2012), no. 3, pp. 1090\textendash1118.
\bibitem{MKB} \textsc{E. Merkurjev, T. Kostić, and A. L. Bertozzi}, \emph{An MBO scheme on
graphs for segmentation and image processing}, SIAM Journal on Imaging Sciences 6 (2013), no. 4, pp. 1903\textendash 1930.
\bibitem{ET} \textsc{S. Esedo\=glu and Y. H. R. Tsai}, \emph{Threshold dynamics for the piecewise constant Mumford\textendash Shah functional}, 
Journal of Computational Physics 211 (2006), no. 1, pp. 367\textendash384.
\bibitem{BE1991} \textsc{J. F. Blowey and C. M. Elliott}, \emph{The Cahn-Hilliard gradient theory for phase separation with
non-smooth free energy, Part I: Mathematical Analysis}, Eur. J. appl. Math 3 (1991), pp. 233\textendash279.
\bibitem{BE1992} \textsc{J. F. Blowey and C. M. Elliott}, \emph{The Cahn-Hilliard gradient theory for phase separation with
non-smooth free energy, Part II: Numerical analysis}, Eur. J. appl. Math 3 (1992), pp. 147\textendash179.
\bibitem{BE1993} \textsc{J. F. Blowey and C. M. Elliott}, \emph{Curvature Dependent Phase Boundary Motion and Parabolic Double Obstacle Problems}, In: W. M. Ni, L. A. Peletier and J. L. Vazquez (eds) Degenerate Diffusions. The IMA Volumes in Mathematics and its Applications 47 (1993), pp. 19\textendash60.
\bibitem{BKS2018} \textsc{J. Bosch, S. Klamt, and M. Stoll}, \emph{Generalizing diffuse interface methods on graphs: non-smooth potentials and hypergraphs}, SIAM J. appl. Math 78 (2018), no. 3, pp. 1350\textendash1377.
\bibitem{CE}\textsc{X. Chen and C. M. Elliott}, \emph{Asymptotics for a Parabolic Double Obstacle Problem}, Proc. Mathematical and Physical Sciences 444 (1994), no. 1922, pp. 429\textendash 445.
\bibitem{BuddvG} \textsc{J. Budd and Y. van Gennip}, \emph{Mass-conserving diffusion-based dynamics on graphs}, in preparation.
\bibitem{Buddfidelity}  \textsc{J. Budd and Y. van Gennip}, \emph{Classification and image processing with a semi-discrete scheme for fidelity forced Allen--Cahn on graphs}, in preparation.
\bibitem{MBO1992} \textsc{J. Bence, B. Merriman, and S. Osher}, \emph{Diffusion generated motion by mean curvature},
CAM Report, 92-18, Department of Mathematics, University of California,
Los Angeles, (1992).
\bibitem{Evans}\textsc{L. C. Evans}, \emph{Convergence of an algorithm for mean curvature motion}, Indiana Univ. Math. J. 42 (1993), no. 2, pp. 533\textendash 557.
\bibitem{BG} \textsc{G. Barles and C. Georgelin}, \emph{A simple proof of convergence for an
approximation scheme for computing motions by mean curvature},
SIAM J. Numer. Anal. 32 (1995), no. 2, pp. 484\textendash500.
\bibitem{SKY} \textsc{D. Swartz and N. Kwan Yip}, \emph{Convergence of diffusion generated motion to motion
by mean curvature}, Communications in Partial Differential Equations 42 (2017), no. 10, pp. 1598\textendash 1643.
\bibitem{LS} \textsc{S. Luckhaus and T. Sturzenhecker}, \emph{Implicit time discretization for
the mean curvature flow equation}, Calc. Var. Partial Differential Equations 3 (1995), no. 2, pp. 253\textendash 271.
\bibitem{AC} \textsc{S. M. Allen and J. W. Cahn}, \emph{A microscopic theory for antiphase
boundary motion and its application to antiphase domain coarsening},
Acta Metallurgica 27 (1979), no. 6, pp. 1085\textendash 1095.
\bibitem{BV}\textsc{L. Bronsard and R. V. Kohn}, \emph{Motion by mean curvature as the singular limit of Ginzburg\textendash Landau dynamics}, J. Differential Equations 90 (1991), no. 2, pp. 211\textendash 237.
\bibitem{ESS} \textsc{L. C. Evans, H. M. Soner, and P. E. Souganidis}, \emph{Phase transitions and generalized
motion by mean curvature}, Communications on Pure and Applied Mathematics 45 (1992), no. 9, pp. 1097\textendash 1123.
\bibitem{Soner} \textsc{H. M. Soner}, \emph{Ginzburg\textendash Landau Equation and Motion by
Mean Curvature, I: Convergence}, The Journal of Geometric Analysis 7 (1997), no. 3, pp. 437\textendash475.
\bibitem{BSS} \textsc{G. Barles, H. M. Soner, and P. E. Souganidis}, \emph{Front propagation and phase field theory}, SIAM J. Control Optim. 31 (1993), no. 2, pp. 439\textendash469.
\bibitem{MS} \textsc{D. Mumford and J. Shah}, \emph{Optimal Approximation by
Piecewise Smooth Functions and Associated Variational
Problems}, Communication in Pure and Applied Mathematics
42 (1989), no. 1, pp. 577\textendash 685.
\bibitem{CV} \textsc{T. Chan and L. Vese}, \emph{Active Contour Without Edges}, IEEE
Transactions on Image Processing 10 (2001), no. 2, pp. 266\textendash 277.
%\bibitem{BKS2018} J. Bosch, S. Klamt, and M. Stoll, \emph{Generalizing diffuse interface methods on graphs: non-smooth potentials and hypergraphs}, SIAM J. appl. Math 78 (2018), no. 3, 1350\textendash1377
\bibitem{birdspot} \textsc{L. Calatroni, Y. van Gennip, C-B. Sch{\"o}nlieb, H. M. Rowland, and A. Flenner}, \emph{Graph Clustering, Variational Image Segmentation Methods and Hough Transform Scale Detection for Object Measurement in Images}, Journal of Mathematical Imaging and Vision 57 (2017), no. 2, pp. 269\textendash 291.
\bibitem{vGB}\textsc{Y. van Gennip and A. L. Bertozzi}, \emph{$\Gamma$-convergence of graph Ginzburg\textendash
Landau functionals}, Adv. Differential Equations 17 (2012), no. 11\textendash
12, pp. 1115\textendash1180.
\bibitem{AGS2008} \textsc{L. Ambrosio, L. Gigli, G. Savaré}, \emph{Gradient Flows in Metric Spaces and in the Space of Probability
Measures}, 2nd edn, Lectures in Mathematics ETH Zürich, Birkhäuser Verlag, Basel, 2008.
\bibitem{Brezis} \textsc{H. Brezis}, \emph{Functional Analysis, Sobolev Spaces and Partial Differential Equations}, Springer, Berlin, 2010.
\bibitem{LB2016} \textsc{X. Luo and A. L. Bertozzi}, \emph{Convergence Analysis of the Graph Allen--Cahn Scheme}, J. Stat. Phys. 167 (2017), no. 3, pp. 934--958.
\bibitem{BS} \textsc{S. Banach and S. Saks}, \emph{Sur la convergence dans les champs $L^{p}$}, Studia Mathematica 2 (1930), pp. 51--57.

\bibitem{Braides} \textsc{A. Braides}, \emph{$\Gamma$-convergence for beginners}, first ed., Oxford Lecture Series
in Mathematics and its Applications, vol. 22, Oxford University
Press, Oxford, 2002.
\bibitem{DalMaso} \textsc{G. Dal Maso}, \emph{An introduction to $\Gamma$-convergence}, first ed., Progress
in Nonlinear Differential Equations and Their Applications, vol. 8,
Birkh\"auser, Boston, 1993.
\bibitem{OKMBO}\textsc{Y. van Gennip}, \emph{An MBO Scheme for Minimizing the Graph Ohta–Kawasaki Functional}, J Nonlinear Sci (2018). %https://doi.org/10.1007/s00332-018-9468-8
\bibitem{ATW} \textsc{F. Almgren, J. E. Taylor, and L. Wang}, \emph{Curvature-driven 
flows: a variational approach}, SIAM J. Control Optim. 31 (1993), no. 2, pp. 387\textendash
438.
\bibitem{Elmo2014}\textsc{A. El Chakik, A. Elmoataz, and X. Desquesnes}, \emph{Mean curvature flow on graphs for image and
manifold restoration and enhancement}, Signal Process. 105 (2014), pp. 449–463.
\bibitem{Leoni} \textsc{G. Leoni}, \emph{A First Course in Sobolev Spaces}, Grad. Stud. Math., vol. 105, Amer. Math. Soc., Providence, RI, 2009.
\bibitem{H1reg} \textsc{D. Wachsmuth}, \emph{The regularity of the positive part of functions in $L^2(I; H^1(\Omega)) \cap H^1(I; H^1(\Omega)^*)$ with applications to parabolic equations}, Commentationes Mathematicae Universitatis Carolinae 57 (2016), no. 3, pp. 327–332. %DOI: 10.14712/1213-7243.2015.168. 
%\bibitem{Brakke}K. A. Brakke, \emph{The motion of a surface by its mean curvature}, Mathematical
%Notes, vol. 20, Princeton University Press, Princeton, N.J.,
%1978.
%\bibitem{OS} S. J. Osher and J. A. Sethian, \emph{Fronts propagating with curvature-dependent speed: algorithms based on Hamilton-Jacobi formulations}, Journal of Computational Physics 79 (1988), no.1, 12\textendash49.
%\bibitem{ES}L. C. Evans, J. Spruck, et al. \emph{Motion of level sets by mean curvature i.} J. Diff. Geom 33 (1991), no. 3, 635\textendash681.
%\bibitem{CGG} Y. G. Chen, Y. Giga, S. Goto, et al. \emph{Uniqueness and existence of viscosity
%solutions of generalized mean curvature flow equations}. J. Differential Geom 33 (1991), no. 3, 749\textendash 786.
%\bibitem{Giga} Y. Giga, \emph{Surface evolution equations\textemdash a level set method}, Lipschitz Lecture Notes 44, Univ. of Bonn, 2002.
%\bibitem{OF}S. J. Osher and R. Fedkiw, \emph{Level set methods and dynamic implicit
%surfaces}, vol. 153, Springer Verlag, 2003.
%\bibitem{LS} S. Luckhaus and T. Sturzenhecker, \emph{Implicit time discretization for
%the mean curvature flow equation}, Calc. Var. Partial Differential Equations 3 (1995), no. 2, 253\textendash 271.
%\bibitem{MBO1992} J. Bence, B. Merriman, and S. Osher. \emph{Diffusion generated motion by mean curvature}
%CAM Report, 92-18, Department of Mathematics, University of California,
%Los Angeles, 1992.
%\bibitem{Evans}L. C. Evans, \emph{Convergence of an algorithm for mean curvature motion}, Indiana Univ. Math. J. 42 (1993), no. 2, 533\textendash 557.
%\bibitem{BG} G. Barles and C. Georgelin, \emph{A simple proof of convergence for an
%approximation scheme for computing motions by mean curvature},
%SIAM J. Numer. Anal. 32 (1995), no. 2, 484\textendash500.
%\bibitem{SKY} D. Swartz and N. Kwan Yip, \emph{Convergence of diffusion generated motion to motion
%by mean curvature}, Communications in Partial Differential Equations 42 (2017), no. 10, 1598\textendash 1643.
%\bibitem{AC}S. M. Allen and J. W. Cahn, \emph{A microscopic theory for antiphase
%boundary motion and its application to antiphase domain coarsening},
%Acta Metallurgica 27 (1979), no. 6, 1085\textendash 1095.
%\bibitem{BV}L. Bronsard and R. V. Kohn, \emph{Motion by mean curvature as the singular limit of Ginzburg\textendash Landau dynamics}, J. Differential Equations 90 (1991), no. 2, 211\textendash 237.
%\bibitem{ESS} L. C. Evans, H. M. Soner, and P. E. Souganidis. \emph{Phase transitions and generalized
%motion by mean curvature}, Communications on Pure and Applied Mathematics 45 (1992), no. 9, 1097\textendash 1123.
%\bibitem{Soner} H. M. Soner, \emph{Ginzburg\textendash Landau Equation and Motion by
%Mean Curvature, I: Convergence}, The Journal of Geometric Analysis 7 (1997), no. 3, 437\textendash475.
%\bibitem{BSS} G. Barles, H. M. Soner, and P. E. Souganidis, \emph{Front propagation and phase field theory}, SIAM J. Control Optim. 31 (1993), no. 2, 439\textendash469.
%double obs



%%%%
%\bibitem{DR1976} P.J. Davis and P. Rabinowitz, \emph{Methods of Numerical Integration}, Academic Press, Orlando, FL (1976). 
%\bibitem{EvansPDE} Evans PDE
%%%%
%\bibitem{HHT2011} M. Hintermuller, M. Hinze, and M. H. Tber, \emph{An adaptive finite-element Moreau–Yosida based solver for a non-smooth Cahn–Hilliard problem}, Optim. Methods Softw. 26 (2011), 777–811.
%\bibitem{Eyre} D. J. Eyre, \emph{An unconditionally stable one-step scheme for gradient systems.}, Unpublished article (1998)
%image proc



%\bibitem{GF} F. Gibou and R. Fedkiw, \emph{A fast hybrid k-means level set algorithm for segmentation}, In 4th Annual Hawaii International Conference on Statistics and Mathematics, 281\textendash 291, 2005.


%misc

\end{thebibliography}
\end{document}